\newcommand{\1}{{\mathbbm 1}}
\newcommand{\C}{{\mathbb C}}
\newcommand{\Z}{{\mathbb Z}}
\newcommand{\la}{\langle}
\newcommand{\ra}{\rangle}
\DeclareMathOperator{\Aut}{Aut}
\DeclareMathOperator{\End}{End}
\DeclareMathOperator{\wt}{wt}
\DeclareMathOperator{\Hom}{Hom}
\newtheorem{thm}{Theorem}[section]
\newtheorem{prop}[thm]{Proposition}
\newtheorem{lem}[thm]{Lemma}
\newtheorem{rmk}[thm]{Remark}
\newtheorem{defn}[thm]{Definition}
\begin{document}

\begin{center}
{\Large \bf  Fusion rules for the orbifold vertex operator algebras $L_{\widehat{\frak{sl}_2}}(k,0)^{K}$  }
\end{center}

\begin{center}
{Rabia Iqbal
\footnote{Supported by China NSF grants No. 12171312. Email: rabipu14@sjtu.edu.cn.}
and Cuipo Jiang
\footnote{Supported by China NSF grant No.12171312.  Email: cpjiang@sjtu.edu.cn.}\\
School of Mathematical  Sciences, Shanghai Jiao Tong University\\
Shanghai 200240, China}
\end{center}

\let\thefootnote\relax\footnotetext{2010 Mathematics Subject Classification. 17B69.   \\
Key words. Orbifold vertex operator algebras, irreducibla modules, fusion rules.}

\begin{abstract}
For the Klein group $K$  and a positive integr $k$, irreducible modules of the orbifold vertex operator algebra $L_{\widehat{\mathfrak{sl}_2}}(k,0)^{K}$  have been  classified and constructed in \cite{JWa}. In this paper, we determine completely the fusion rules of 
$L_{\widehat{\frak{sl}_2}}(k,0)^{K}$.
\end{abstract}


\section{Introduction}

 Let $\mathfrak{sl}_2(\C)$ be the 3-dimensional simple Lie algebra over the complex field $\C$. For a positive integer $k$, let    $L_{\widehat{\mathfrak{sl}_2}}(k,0)$ be the  associated rational and simple affine vertex operator algebra  \cite{FZ92}, \cite{LL04}. It is known that the automorphism group of 
 $L_{\widehat{\mathfrak{sl}_2}}(k,0)$ is $PSL(2)$. Given a finite subgroup $G$ of $PSL(2)$, the orbifold vertex operator algebra $L_{\widehat{\mathfrak{sl}_2}}(k,0)^G$ is simple, and  is conjecturely rational which is shown to be true when $G$ is solvable \cite{CM}. It is very natrual and desirable to study the structure and representations of the orbifold vertex operator algebra $L_{\widehat{\mathfrak{sl}_2}}(k,0)^G$.
 If $|G|=2$, the irreducible modules and fusion rules of $L_{\widehat{\mathfrak{sl}_2}}(k,0)^G$ are characterized in \cite{JW2}.  If  $G$ is  the Klein subgroup $K$ of $\Aut(L_{\widehat{\mathfrak{sl}_2}}(k,0))$ generated by involutions $\sigma_1$ and $\sigma_2$, which are defined by
 $\sigma_1(h(-1)\mathbbm{1})=h(-1)\mathbbm{1}$, $\sigma_1(e(-1)\mathbbm{1})=-e(-1)\mathbbm{1}$, $\sigma_1(f(-1)\mathbbm{1})=-f(-1)\mathbbm{1}$, and 
 $\sigma_2(h(-1)\mathbbm{1})=-h(-1)\mathbbm{1}$, $\sigma_2(e(-1)\mathbbm{1})=f(-1)\mathbbm{1}$, $\sigma_2(f(-1)\mathbbm{1})=e(-1)\mathbbm{1}$, respectively,  
  the irreducible modules of $L_{\widehat{\mathfrak{sl}_2}}(k,0)^K$ are classified in \cite{JWa}, where $\{e,f,h\}$ is a standard basis of $\mathfrak{sl}_2(\C)$.  In this paper we determine  the fuison rules completely for $L_{\widehat{\mathfrak{sl}_2}}(k,0)^K$. As pointed out in \cite{JWa}, if $k=1$,  $L_{\widehat{\mathfrak{sl}_2}}(1,0)^K$ is isomorphic to the orbifold lattice vertex algebra $ V_{\Z\beta}^+$ with  $(\beta|\beta)=8$. Classification of irreducible modules and  fusion rules of $V_{\Z\gamma}^+$  with $(\gamma|\gamma)\in 2\Z_+$ were given in \cite{DN99} and \cite{Ab01} respectively. If $k=2$, then $L_{\widehat{\mathfrak{sl}_2}}(2,0)^K\cong L(\frac{1}{2},0)^{\otimes 3}$, where $L(\frac{1}{2},0)$ is the simple Virasoro vertex operator algebra with central charge $\frac{1}{2}$.  The  fusion rules of $L(\frac{1}{2},0)$ were given in \cite{DMZ94} and \cite{W93}. So we assume $k\geq 3$ in this paper.
  Notice that the Klein group $K$ is not  a cyclic group. This means that we need to determine fusion products of modules of $L_{\widehat{\mathfrak{sl}_2}}(k,0)^K$ coming from twisted modules of $L_{\widehat{\mathfrak{sl}_2}}(k,0)$ with respect to different involutions. However, the associated intertwining operators are usually not easy to construct  in such cases. 
  Instead of constructing the associated intertwining operators directly, we notice that when $k$ is an  even positive integer, $V_{\Z\gamma}^+\otimes K_0^+$ is a conformal vertex subalgebra of  $L_{\widehat{\mathfrak{sl}_2}}(k,0)^K$.  Then we can decompose the irreducibule modules of 
  $L_{\widehat{\mathfrak{sl}_2}}(k,0)^K$ into direct sum of irreducible modules of  $V_{\Z\gamma}^+\otimes K_0^+$. By using the fusion rules obtained in \cite{Ab01} and \cite{JW2} we get the desired fusion products, where $V_{\Z\gamma}^+$ and $K_0^+$  are  the orbifold subalgebras of   lattice vertex operator algbera  and  parafermion vertex operator algbera with respect to the involution $\sigma_2$ and 
  the Heisenberg Lie algebra ${\C}h$, respectively. The case that $k$ is an odd integer is  easier to deal with. We would like to point out that with the geven fusion rules, the $S$-matrix of  $L_{\widehat{\mathfrak{sl}_2}}(k,0)^K$ can be easily given. 
  
  The study of $L_{\widehat{\mathfrak{sl}_2}}(k,0)^K$ is also partly motivated by the work  \cite{JLam19}. 
Let  $\mathfrak{so}_{m}$ be the orthogonal simple Lie algebra over $\C$, and $L_{\widehat{\mathfrak{so}_m}}(1,0)$  the associated simple and rational affine vertex operator algebra with level 1.
 For $\ell \in \mathbb{Z}_{\geqslant 1}$, the tensor product $L_{\widehat{\mathfrak{so}_m}}(1,0)^{\otimes \ell}$ is still rational and the diagonal action of the affine Lie algebra ${\widehat{\mathfrak{so}_m}}$ on $L_{\widehat{\mathfrak{so}_m}}(1,0)^{\otimes \ell}$ defines a vertex subalgebra $L_{\widehat{\mathfrak{so}_m}}(\ell,0)$ of level $\ell$.
The commutant vertex operator algebras $C_{{L_{\widehat{\mathfrak{so}_m}}(1,0)}^{\otimes \ell}}({L_{\widehat{\mathfrak{so}_m}}(\ell,0)})$ for $m\geqslant 4$, $\ell \geqslant 3$ were characterized in \cite{JLam19} as an orbifold vertex operator algebra,  which is isomorphic to  $L_{\widehat{\mathfrak{so}_l}}(m,0)^G$ or 
$(L_{\widehat{\mathfrak{so}_l}}(m,0)\oplus L_{\widehat{\mathfrak{so}_l}}(m,m\Lambda_1))^G$ for $l\geq 4$, where $G$ is a finite abelian group generated by involutions. When $l=3$,
 $C_{{L_{\widehat{\mathfrak{so}_m}}(1,0)}^{\otimes 3}}({L_{\widehat{\mathfrak{so}_m}}(3,0)})$ can be realized as the obifold vertex operator algebra $L_{\widehat{\mathfrak{sl}_2}}(2m,0)^{K}$ if $m$ is odd,  and  $(L_{\widehat{\mathfrak{sl}_2}}(2m,0)+L_{\widehat{\mathfrak{sl}_2}}(2m,2m))^{K}$ if $m$ is even. Furthermore, $C_{{L_{\widehat{\mathfrak{so}_m}}(1,0)}^{\otimes 3}}({L_{\widehat{\mathfrak{so}_m}}(3,0)})$ is the building block of $C_{{L_{\widehat{\mathfrak{so}_m}}(1,0)}^{\otimes \ell}}({L_{\widehat{\mathfrak{so}_m}}(\ell,0)})$.

The paper is organized as follows. In Section 2, we briefly review some basic notations and facts on vertex operator algebras. In Section 3, we  give  several  lemmas and propositions which are very useful  in our determining the fusion rules.   Section 4 is dedicated to give  all the fusion rules of $L_{\widehat{\mathfrak{sl}_2}}(k,0)^K$.  Throughout the  paper, we always assume that $k$ is a positive integer.

We use the symbols $\C$, $\Z$, and  $\Z_+$  for  the sets  of complex numbers,  integers,  and positive integers, respectively.

\section{Preliminaries}

In this section, we first review some basics and  facts on  vertex operator algebras from \cite{DLM98-1}, \cite{DLM00}, \cite{FHL93}, \cite{LL04} and \cite{Z96}, \cite{DJX13},  \cite{MT04}. Then we recall  some main  results  from \cite{JWa}.

Let $(V,Y,\mathbbm{1},\omega)$ be a vertex operator algebra \cite{Bor86}, \cite{FLM88}.  Let $g$ be an automorphism of the vertex operator algebra $V$ of finite order $T$. Denote the decomposition of $V$ into eigenspaces of $g$ as:
\[ V=\bigoplus_{r\in \mathbb{Z}/T\mathbb{Z}}V^r,\]
where $V^r= \{v \in V |gv = e^{-2\pi \sqrt{-1}\frac{r}{T}}v\}$, $0 \leqslant r \leqslant T-1$. We use $r$ to denote both an integer between $0$ and $T-1$ and its residue class modulo $T$ in this situation.

\begin{defn}

Let $V$ be a vertex operator algebra. A weak $g$-twisted $V$-module is a vector space $M$ equipped with a linear map
\begin{align*}
Y_M(\cdot, x) : V & \longrightarrow (\End M)[[x^{\frac{1}{T}},x^{-\frac{1}{T}}]] \\
                v & \longmapsto Y_M(v,x) = \sum_{n\in \frac{1}{T}\mathbb{Z}}v_nx^{-n-1},
\end{align*}
 where $v_n \in \End M $, satisfying the following conditions for $0 \leqslant r \leqslant T-1$, $u \in V^r, v \in V$, $w \in M$:
\[Y_M(u,x) = \sum_{n\in \frac{r}{T}+\mathbb{Z}}u_nx^{-n-1},\]
\[u_sw=0 \quad\text{for} \quad s \gg 0,\]
\[Y_M({\bf 1},x) = id_M,\]
\[x_{0}^{-1}\delta(\frac{x_1-x_2}{x_0})Y_M(u,x_1)Y_M(v,x_2)-x_{0}^{-1}\delta(\frac{x_2-x_1}{-x_0})Y_M(v,x_2)Y_M(u,x_1)\]
\[=x_{2}^{-1}(\frac{x_1-x_0}{x_2})^{-\frac{r}{T}}\delta(\frac{x_1-x_0}{x_2})Y_M(Y(u,x_0)v,x_2),\]
where $\delta(x)=\sum_{n\in\mathbb{Z}}x^n$ and all binomial expressions are to be expanded in nonnegative integral powers of the second variable.
\end{defn}

We have the following Borcherds identities  \cite{DLM98-1}, \cite{Ab01}.

\begin{equation}   \label{Borcherds identity 1}
[ u_{m + \frac{r}{T}}, v_{n + \frac{s}{T}} ] = \sum_{i=0}^{\infty} \binom{m + \frac{r}{T}}{i}(u_iv)_{m + n + \frac{r+s}{T}-i},
\end{equation}

\begin{equation}    \label{Borcherds identity 2}
\sum_{i=0}^{\infty} \binom{\frac{r}{T}}{i}(u_{m+i}v)_{n+\frac{r+s}{T}-i} = \sum_{i=0}^{\infty} (-1)^i \binom{m}{i} (u_{m+\frac{r}{T}-i}v_{n+\frac{s}{T}+i} -(-1)^mv_{m+n+\frac{s}{T}-i}u_{\frac{r}{T}+i}),
\end{equation}
where $u \in V^r$, $v \in V^s$, $m$, $n \in \mathbb{Z}$.

\begin{defn}
An admissible $g$-twisted $V$-module is a weak $g$-twisted $V$-module which carries a $\frac{1}{T}\mathbb{Z}_{\geqslant 0}$-grading $M = \oplus_{n\in \frac{1}{T}\mathbb{Z}_{\geqslant 0}}M(n)$ satisfying $v_mM(n) \subseteq M(n+r-m-1)$ for homogeneous $v \in V_r$, $m$, $n \in\frac{1}{T}\mathbb{Z}$.
\end{defn}

\begin{defn}
A $g$-twisted $V$-module is a weak $g$-twisted $V$-module which carries a $\mathbb{C}$-grading:
\[M=\oplus_{ \lambda \in \mathbb{C}}M_\lambda,\]
such that dim $M_{\lambda} < \infty$, $M_{\lambda +\frac{n}{T}} = 0$ for fixed $\lambda$ and $n \ll 0$, $L(0)w = \lambda w = ( \wt w )w$ for $w \in M_{\lambda}$, where $L(0)$ is the component operator of $Y_M(\omega, x) = \sum_{n\in \mathbb{Z}}L(n)x^{-n-2}$.
\end{defn}


\begin{defn}
A vertex operator algebra is called $C_2$-cofinite if $V/C_2(V)$ is finite-dimensional,
where $C_2(V) = \langle u_{-2}v \mid u, v \in V \rangle$.
\end{defn}

We have the following result from \cite{ABD04}, \cite{DLM98-1} and \cite{Z96}.

\begin{thm}
Let $V$ be a vertex operator algebra satisfying the $C_2$-cofinite property, then $V$ has only finitely many irreducible admissible modules up to isomorphism. The rationality of $V$ also implies the same result.
\end{thm}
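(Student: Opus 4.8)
The plan is to reduce the statement to a finiteness property of Zhu's associative algebra $A(V)$. Recall that $A(V)=V/O(V)$ carries an associative product $a*b=\sum_{i\ge 0}\binom{\wt a}{i}a_{i-1}b$ for homogeneous $a$, and that by Zhu's theorem the assignment $M\mapsto M(0)$, sending an admissible module to its lowest-degree component, induces a bijection between the isomorphism classes of irreducible admissible $V$-modules and the isomorphism classes of irreducible (left) $A(V)$-modules. Since a finite-dimensional associative $\C$-algebra $A$ has only finitely many irreducible modules up to isomorphism --- they are exactly the simple modules of the semisimple quotient $A/J(A)$, which by Artin--Wedderburn is a finite product of matrix algebras --- it therefore suffices to prove, under either hypothesis, that $\dim_{\C}A(V)<\infty$.

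For the rationality assertion, I would invoke the structure theorem of Dong--Li--Mason: if $V$ is rational, then $A(V)$ is a finite-dimensional semisimple associative algebra. Given this, the reduction above immediately yields that $V$ has only finitely many irreducible admissible modules; indeed, rationality forces every admissible module to be a direct sum of these finitely many irreducibles.

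The substantive case is $C_2$-cofiniteness, and here the key is again to bound $\dim A(V)$. I would start from the commutative Poisson algebra $R_V:=V/C_2(V)$, which is finite-dimensional by hypothesis, and fix homogeneous $a^1,\dots,a^s\in V$ whose images span $R_V$. The first step is the Gaberdiel--Neitzke/Buhl spanning theorem: $C_2$-cofiniteness implies that $V$ is linearly spanned by the ordered monomials $a^{i_1}_{-n_1}\cdots a^{i_t}_{-n_t}\1$ with $n_1\ge n_2\ge\cdots\ge n_t\ge 1$. The second step is to pass to $A(V)$: using Zhu's defining relation $u\circ v\in O(V)$ one can successively lower every index $n_j$ to $1$, so that $A(V)$ is spanned by the images of the products $a^{i_1}_{-1}\cdots a^{i_t}_{-1}\1$, equivalently by the $*$-monomials $a^{i_1}*\cdots*a^{i_t}$. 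Finally, since $A(V)$ is commutative modulo the lower steps of its natural filtration and is graded compatibly with $\wt$, a degree count over the finitely many generators bounds the number of such monomials, giving $\dim A(V)<\infty$.

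The \emph{main obstacle} I anticipate is precisely this last bounding step: the spanning set of $A(V)$ by $*$-monomials does not by itself terminate, because a generator of weight one contributes a factor that need not raise the weight, so the length $t$ is a priori unbounded. Controlling this requires the finer filtration analysis of Li and the full strength of the Buhl/ABD argument, showing that the associated graded of $A(V)$ is a quotient of a finitely generated commutative algebra on which the relations coming from $O(V)$, together with the $L(-1)+L(0)$ relations, cut out a finite-dimensional space. Once finite-dimensionality of $A(V)$ is secured in both cases, the conclusion follows uniformly from the reduction in the first paragraph.
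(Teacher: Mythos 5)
Your first paragraph is the right skeleton, and it is in fact the route of the literature: the paper gives no proof of this theorem at all, quoting it from \cite{ABD04}, \cite{DLM98-1} and \cite{Z96}, and those sources argue exactly as you do --- Zhu's bijection $M\mapsto M(0)$ between irreducible admissible $V$-modules and irreducible $A(V)$-modules reduces both assertions to $\dim_{\C}A(V)<\infty$, with the rationality half being Dong--Li--Mason's theorem that $A(V)$ is then finite-dimensional semisimple. The genuine gap is in your $C_2$-cofinite half, and you name it yourself: the $*$-monomials $a^{i_1}*\cdots*a^{i_t}$ form an \emph{infinite} spanning set, since $t$ is unbounded, so this proves nothing about $\dim A(V)$; your resolution is to invoke ``the full strength of the Buhl/ABD argument,'' which is to cite the very theorem being proved. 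As written, the central quantitative step is absent, not merely deferred. A secondary inaccuracy: the Gaberdiel--Neitzke spanning theorem under $C_2$-cofiniteness gives monomials with \emph{strictly} decreasing modes $n_1>n_2>\cdots>n_t\geq 1$; with mere non-increase, as you wrote it, one has only a PBW-type statement that holds without $C_2$-cofiniteness and carries no finiteness information.

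The gap closes with a short filtration argument that makes the spanning theorem unnecessary. Filter $A(V)$ by letting $F_pA(V)$ be the image of $\bigoplus_{n\leq p}V_n$; then $F_p*F_q\subseteq F_{p+q}$, since $\wt(a_{i-1}b)\leq \wt a+\wt b$. For homogeneous $u,v$ one has
\begin{equation*}
u\circ v=\sum_{i\geq 0}\binom{\wt u}{i}u_{i-2}v\in O(V),
\qquad\text{hence}\qquad
u_{-2}v\equiv-\sum_{i\geq 1}\binom{\wt u}{i}u_{i-2}v \pmod{O(V)},
\end{equation*}
and every term on the right has weight $\wt u+\wt v+1-i<\wt(u_{-2}v)$. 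So each homogeneous element of $C_2(V)$ lands in $F_{p-1}A(V)$, i.e.\ vanishes in $\mathrm{gr}_pA(V)$, and the natural surjection $V\twoheadrightarrow \mathrm{gr}\,A(V)$ factors through $V/C_2(V)$. Since each $V_n$ is finite-dimensional and the filtration is exhaustive and bounded below, $\dim A(V)\leq\dim V/C_2(V)<\infty$. This one-paragraph bound is in substance how \cite{ABD04} obtains finiteness (there for all the algebras $A_n(V)$); substituting it for your third paragraph, your reduction in the first paragraph then completes both halves of the theorem.
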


We have the following results from \cite{DLM98-1} and \cite{DLM00}.

\begin{thm}
Let V be a $g$-rational vertex operator algebra, then

(1) Any irreducible admissible $g$-twisted $V$-module $M$ is a $g$-twisted $V$-module. Moreover, there exists a number $\lambda \in \mathbb{C}$ such that $M=\oplus_{n\in \frac{1}{T}\mathbb{Z}_{\geqslant 0}}M_{\lambda + n}$, where $M_{\lambda} \ne 0$. The number $\lambda$ is called the conformal weight of $M$;

(2)There are only finitely many irreducible admissible $g$-twisted $V$-modules up to isomorphism.
\end{thm}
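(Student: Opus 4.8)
The plan is to reduce both assertions to the finite-dimensional representation theory of the $g$-twisted Zhu algebra $A_g(V)$ of Dong--Li--Mason, whose irreducible modules control the irreducible admissible $g$-twisted $V$-modules.

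First I would recall the construction of $A_g(V)=V/O_g(V)$, where $O_g(V)$ is the span of the standard twisted relations; equipped with its twisted product this is an associative algebra with identity $[\mathbbm{1}]$. The one structural fact needed at the outset is that the image $[\omega]$ of the conformal vector is central in $A_g(V)$. Next I would set up the correspondence between modules: for any admissible $g$-twisted module $M=\bigoplus_{n\in\frac{1}{T}\mathbb{Z}_{\geqslant 0}}M(n)$, the bottom level $M(0)$ carries an $A_g(V)$-module structure through the zero-mode action $v\mapsto o(v)$, and the assignment $M\mapsto M(0)$ descends to a bijection between isomorphism classes of irreducible admissible $g$-twisted $V$-modules and isomorphism classes of irreducible $A_g(V)$-modules. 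The inverse is furnished by the twisted analogue of the Frenkel--Zhu generalized Verma module construction: from an $A_g(V)$-module $U$ one builds an admissible $g$-twisted module and passes to the unique irreducible quotient whose bottom level recovers $U$.

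The core of the matter is the structure theorem that $g$-rationality forces $A_g(V)$ to be a finite-dimensional semisimple associative algebra. Semisimplicity I would extract from complete reducibility of admissible modules: a nonzero Jacobson radical of $A_g(V)$ would, through the induction functor above, produce an admissible $g$-twisted module failing to be completely reducible, contradicting $g$-rationality. Granting finite-dimensionality as well, $A_g(V)$ is by the Wedderburn theorem a finite product of full matrix algebras over $\mathbb{C}$, hence has only finitely many irreducible modules up to isomorphism; transporting this along the bijection of the preceding step gives part (2). For part (1), let $M$ be irreducible admissible. Then $M(0)$ is a finite-dimensional irreducible $A_g(V)$-module, and since $[\omega]$ is central it acts on $M(0)$ as a scalar $\lambda$ by Schur's lemma, which is the conformal weight. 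On $M(n)$ the operator $L(0)=o(\omega)$ then has the single generalized eigenvalue $\lambda+n$; an argument using irreducibility together with the admissible grading shows the nilpotent part vanishes, so $M=\bigoplus_{n}M_{\lambda+n}$ with each $M_{\lambda+n}=M(n)$ finite-dimensional and the grading bounded below, i.e.\ $M$ is an ordinary $g$-twisted $V$-module.

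I expect the main obstacle to be the two steps carrying all the content: the inverse functor realizing an arbitrary irreducible $A_g(V)$-module as the bottom level of an admissible $g$-twisted module (the twisted generalized Verma construction, with the verification that its irreducible quotient is admissible with the prescribed bottom level), and the proof that $A_g(V)$ is finite-dimensional---semisimplicity is comparatively formal once complete reducibility is available, but finite-dimensionality is the genuinely delicate input. By contrast, the centrality of $[\omega]$ and the final collapse of the nilpotent part of $L(0)$ are routine once this machinery is in hand.
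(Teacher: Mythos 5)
Your proposal is essentially the argument of the literature this theorem is drawn from: the paper itself states the result without proof, citing \cite{DLM98-1} and \cite{DLM00}, and those references prove it exactly as you outline, via the twisted Zhu algebra $A_g(V)$, the bijection $M \mapsto M(0)$ with inverse given by the twisted generalized Verma construction, semisimplicity of $A_g(V)$ from complete reducibility, and Schur's lemma (in Dixmier's countable-dimensional form) applied to the central element $[\omega]$ to produce the conformal weight. The only refinement worth noting is that finite-dimensionality of $A_g(V)$ is not an independent delicate input as you suggest: once every $A_g(V)$-module is completely reducible, Wedderburn--Artin together with the countable dimension of $V$ over $\mathbb{C}$ already forces $A_g(V)$ to be a finite direct sum of full matrix algebras over $\mathbb{C}$.
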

\begin{defn} Let $M=\bigoplus_{n\in\frac{1}{T}\mathbb{Z}_{+}}M(n)$
	be an admissible $g$-twisted $V$-module, the\emph{ contragredient
		module }$M'$ is defined as follows:
	\[
	M'=\bigoplus_{n\in\frac{1}{T}\mathbb{Z}_{+}}M(n)^{*},
	\]
	where $M(n)^{*}=\mbox{Hom}_{\mathbb{C}}(M(n),\mathbb{C}).$ The vertex
	operator $Y_{M'}(v,z)$ is defined for $v\in V$ via
	\begin{eqnarray*}
		\langle Y_{M'}(v,z)f,u\rangle=\langle f,Y_{M}(e^{zL(1)}(-z^{-2})^{L(0)}v,z^{-1})u\rangle,
	\end{eqnarray*}
	where $\langle f,w\rangle=f(w)$ is the natural paring $M'\times M\to\mathbb{C}.$
\end{defn}
\begin{rmk} 1.  $(M',Y_{M'})$ is an admissible $g^{-1}$-twisted
	$V$-module, and $M$ is irreducible if and only if $M'$ is irreducible. \cite{FHL93}.
\end{rmk}

We now recall from \cite{FHL93} the notions of intertwining operators and fusion rules.

\begin{defn} Let $(V,\ Y)$ be a vertex operator algebra and
let $(W^{1},\ Y^{1}),\ (W^{2},\ Y^{2})$ and $(W^{3},\ Y^{3})$ be
$V$-modules. An \emph{intertwining operator} of type $\left(\begin{array}{c}
W^{3}\\
W^{1\ }W^{2}
\end{array}\right)$ is a linear map
\begin{align*}
I(\cdot, z) : W^{1} & \longrightarrow \Hom ( W^{2}, W^{3} ) \{ z \} \\
                 u  & \longmapsto I(u, z) = \sum_{n\in\mathbb{Q}}u_{n}z^{-n-1}
\end{align*}
satisfying:

(1) for any $u\in W^{1}$ and $v\in W^{2}$, $u_{n}v=0$ for $n$
sufficiently large;

(2) $I(L(-1)v,\ z)=\frac{d}{dz}I(v,\ z)$;

(3) (Jacobi identity) for any $u\in V,\ v\in W^{1}$

\[
z_{0}^{-1}\delta\left(\frac{z_{1}-z_{2}}{z_{0}}\right)Y^{3}(u,\ z_{1})I(v,\ z_{2})-z_{0}^{-1}\delta\left(\frac{-z_{2}+z_{1}}{z_{0}}\right)I(v,\ z_{2})Y^{2}(u,\ z_{1})
\]
\[
=z_{2}^{-1}\delta\left(\frac{z_{1}-z_{0}}{z_{2}}\right)I(Y^{1}(u,\ z_{0})v,\ z_{2}).
\]

The space of all intertwining operators of type $\left(\begin{array}{c}
W^{3}\\
W^{1}\ W^{2}
\end{array}\right)$ is denoted by
$$I_{V}\left(\begin{array}{c}
W^{3}\\
W^{1}\ W^{2}
\end{array}\right).$$ Let $N_{W^{1}, W^{2}}^{W^{3}}=\dim I_{V}\left(\begin{array}{c}
W^{3}\\
W^{1}\ W^{2}
\end{array}\right)$. These integers $N_{W^{1}, W^{2}}^{W^{3}}$ are usually called the
\emph{fusion rules}.
\end{defn}




\begin{defn} Let $V$ be a vertex operator algebra, and $W^{1},$
$W^{2}$ be two $V$-modules. A module $(W,I)$, where $I\in I_{V}\left(\begin{array}{c}
\ \ W\ \\
W^{1}\ \ W^{2}
\end{array}\right),$ is called a \emph{tensor product} (or fusion product) of $W^{1}$
and $W^{2}$ if for any $V$-module $M$ and $\mathcal{Y}\in I_{V}\left(\begin{array}{c}
\ \ M\ \\
W^{1}\ \ W^{2}
\end{array}\right),$ there is a unique $V$-module homomorphism $f:W\rightarrow M,$ such
that $\mathcal{Y}=f\circ I.$ As usual, we denote $(W,I)$ by $W^{1}\boxtimes_{V}W^{2}$ or $W^{1}\boxtimes W^{2}$ simply.
\end{defn}


Fusion rules have the following symmetric property \cite{FHL93}.

\begin{prop}\label{fusionsymm.}
Let $W^{i} (i=1,2,3)$ be $V$-modules. Then
$$N_{W^{1},W^{2}}^{W^{3}}=N_{W^{2},W^{1}}^{W^{3}}, \ N_{W^{1},W^{2}}^{W^{3}}=N_{W^{1},(W^{3})^{'}}^{(W^{2})^{'}}.$$
\end{prop}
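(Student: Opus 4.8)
The plan is to prove both identities by constructing explicit linear isomorphisms between the corresponding spaces of intertwining operators, following the strategy of \cite{FHL93}. Neither equality is established by counting solutions directly; instead, each is obtained from a natural operation on intertwining operators that can be checked to be invertible.

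For the commutativity $N_{W^1,W^2}^{W^3}=N_{W^2,W^1}^{W^3}$, I would define a \emph{skew-symmetry} map
\[
\Omega : I_V\left(\begin{array}{c} W^3 \\ W^1\ W^2 \end{array}\right) \longrightarrow I_V\left(\begin{array}{c} W^3 \\ W^2\ W^1 \end{array}\right),
\]
sending $I$ to the operator $\Omega(I)$ determined by
\[
\Omega(I)(v,z)u = e^{zL(-1)}\, I\!\left(u,\, e^{\pi\sqrt{-1}}z\right)v, \qquad u\in W^1,\ v\in W^2.
\]
The first task is to show $\Omega(I)$ is again an intertwining operator, i.e.\ that it satisfies the lower-truncation condition, the $L(-1)$-derivative property (2), and the Jacobi identity (3) of the definition. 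The truncation and derivative properties follow from the corresponding properties of $I$ together with the commutation relation $[L(-1),I(u,z)]=\frac{d}{dz}I(u,z)$ and the standard conjugation formula for $e^{zL(-1)}$. Applying the same construction in the opposite direction and using these conjugation identities shows $\Omega$ is invertible, whence the two spaces have equal dimension.

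For the second identity $N_{W^1,W^2}^{W^3}=N_{W^1,(W^3)'}^{(W^2)'}$, I would pass to the contragredient modules (whose structure is recalled above) and define an \emph{adjoint} map
\[
A : I_V\left(\begin{array}{c} W^3 \\ W^1\ W^2 \end{array}\right) \longrightarrow I_V\left(\begin{array}{c} (W^2)' \\ W^1\ (W^3)' \end{array}\right)
\]
by the pairing
\[
\langle A(I)(u,z)w',\, w_2\rangle = \langle w',\, I\!\left(e^{zL(1)}\big(e^{\pi\sqrt{-1}}z^{-2}\big)^{L(0)}u,\, z^{-1}\right)w_2\rangle
\]
for $u\in W^1$, $w'\in (W^3)'$, $w_2\in W^2$. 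Here the conjugation factor $e^{zL(1)}\big(e^{\pi\sqrt{-1}}z^{-2}\big)^{L(0)}$ is exactly the one appearing in the definition of the contragredient vertex operator, which guarantees that the Jacobi identity for $A(I)$ is obtained from that of $I$ by transposition against the pairing. As before one checks the three axioms and exhibits an inverse, establishing the isomorphism and hence the equality of fusion rules.

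The main obstacle in both cases is verifying the Jacobi identity for the transformed operators. This requires careful manipulation of the formal $\delta$-function identities in the presence of the monodromy factor $e^{\pi\sqrt{-1}}$ and of the generally fractional powers $z^{-n-1}$, $n\in\mathbb{Q}$, that occur in intertwining operators; one must track branch choices consistently so that the substitutions $z\mapsto e^{\pi\sqrt{-1}}z$ and $z\mapsto z^{-1}$ interact correctly with the three-variable Jacobi identity. The conjugation relations for $e^{zL(\pm1)}$ and $z^{L(0)}$ with the vertex operators $Y^i$, together with the defining property of the contragredient action, are the technical ingredients that make these manipulations close up.
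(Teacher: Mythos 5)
Your proposal is correct and coincides with the argument the paper relies on: the paper establishes this proposition simply by citing \cite{FHL93}, and your skew-symmetry map $\Omega(I)(v,z)u=e^{zL(-1)}I(u,e^{\pi\sqrt{-1}}z)v$ and adjoint map $A(I)$ defined via the conjugation factor $e^{zL(1)}(e^{\pi\sqrt{-1}}z^{-2})^{L(0)}$ are precisely the isomorphisms $\Omega_r$ and $A_r$ (with the branch choice $r=0$) constructed there, with invertibility given by $\Omega_{-r-1}\Omega_r=\mathrm{id}$ and $A_{-r-1}A_r=\mathrm{id}$. Since you identify the same maps and the same technical verification (the Jacobi identity under the substitutions $z\mapsto e^{\pi\sqrt{-1}}z$ and $z\mapsto z^{-1}$ with consistent branch tracking), your route is essentially identical to the paper's.
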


We next recall from \cite{DLM96-1} intertwining operators among weak ${g}_i$-tiwisted modules $(M_i,Y_{M_i})$ for $i=1,2,3$, where $g_i$, $i=1,2,3$ are commuting automorphisms of $V$ of order $T_i$, $i=1,2,3$. $V$ can be decomposed into the direct sum of commen eigenspaces 
$$
V=\bigoplus_{j_1,j_2}V_{j_1,j_2}, 
$$
where $V_{j_1,j_2}=\{v\in V|g_s\cdot v=e^{\frac{2\pi j_s\sqrt{-1}}{T_s}}v, \ s=1,2\}$. 

\vskip 0.2cm
An intertwining operator of type $\left(\begin{array}{c}
M^{3}\\
M^{1\ }M^{2}
\end{array}\right)$ with the given data is a linear map
\begin{align*}
{\mathcal Y} : \ M^{1} & \longrightarrow \Hom ( M^{2}, M^{3} ) \{ z \} \\
u  & \longmapsto {\mathcal Y}(u, z) = \sum_{n\in\mathbb{C}}u_{n}z^{-n-1}
\end{align*}
such that for $w^i\in M_i$, $i=1,2$, fixed $c\in\C$, and $n\in{\mathbb Q}$ sufficently large
$$
w^1_{n+c}w^2=0,
$$
 the following (generalized) Jacobi identity holds on $M_2$: for $u\in V_{j_1,j_2}$, and $w\in M_1$,
\[
z_{0}^{-1}\delta\left(\frac{z_{1}-z_{2}}{z_{0}}\right)^{j_1/T_1}Y_{M_3}(u,\ z_{1}){\mathcal Y}(v,\ z_{2})-z_{0}^{-1}\delta\left(\frac{-z_{2}+z_{1}}{z_{0}}\right)^{j_1/T_1}{\mathcal Y}(v,\ z_{2})Y_{M_2}(u,\ z_{1})
\]
\[
=z_{2}^{-1}\delta\left(\frac{z_{1}-z_{0}}{z_{2}}\right)^{-j_2/T_2}{\mathcal Y}(Y_{M_1}(u,\ z_{0})v,\ z_{2}),
\]
and 
$$
{\mathcal Y}(L(-1)v,z)=\frac{d}{dz}{\mathcal Y}(v,z).
$$
The dimension $N_{M_1,M_2}^{M_3}$ of the vector space of intertwining operators of type $\left(\begin{array}{c}
M^{3}\\
M^{1\ }M^{2}
\end{array}\right)$  is called the fusion rule. If $N_{M_1,M_2}^{M_3}\geq 1$, then $g_3=g_1g_2$ \cite{DLM96-1}.
\vskip 0.2cm
We finally recall some notions about quantum dimensions \cite{DJX13}.

\begin{defn}Let $M=\oplus_{n\in\frac{1}{T}\mathbb{Z}_{+}}M_{\lambda+n}$
	be a $g$-twisted $V$-module, the \emph{formal character} of $M$
	is defined as
	
	\[
	\mbox{ch}_{q}M=\mbox{tr}_{M}q^{L\left(0\right)-c/24}=q^{\lambda-c/24}\sum_{n\in\frac{1}{T}\mathbb{Z}_{+}}\left(\dim M_{\lambda+n}\right)q^{n},
	\]
	where $c$ is the central charge of the vertex operator algebra $V$
	and $\lambda$ is the conformal weight of $M$. \end{defn}

It is proved in \cite{Z96}, \cite{DLM00} that $\mbox{ch}_{q}M$ converges to a
holomorphic function in the domain $|q|<1.$ We denote the holomorphic
function $\mbox{ch}_{q}M$ by $Z_{M}\left(\tau\right)$. Here and
below, $\tau$ is in the upper half plane $\mathbb{H}$ and $q=e^{2\pi i\tau}$.

Let $V$ be a vertex
	operator algebra and $M$ a $g$-twisted $V$-module such that $Z_{V}\left(\tau\right)$
	and $Z_{M}\left(\tau\right)$ exists. The quantum dimension of $M$
	over $V$ is defined as \cite{DJX13}
	\[
	\mbox{qdim}_{V}M=\lim_{y\to0}\frac{Z_{M}\left(iy\right)}{Z_{V}\left(iy\right)},
	\]
	where $y$ is real and positive.  Let $M^{0}\cong V,\, M^{1},\,\cdots,\, M^{d}$
denote all inequivalent irreducible $V$-modules. Moreover, assume
the conformal weights $\lambda_{i}$ of $M^{i}$ are positive for
all $i>0.$  Then \cite{DJX13}
\[
\mbox{qdim}_{V}\left(M^{i}\boxtimes M^{j}\right)=\mbox{qdim}_{V}M^{i}\cdot \mbox{qdim}_{V}M^{j}
\]
for $i,\, j=0,\cdots,\, d.$

\section{Lemmas and Propositions}

In this section, we determine the  fusion rules of the orbifold vertex opteator algbera $L_{\widehat{\mathfrak{sl}_2}}(k,0)^{K}$. For convenience we simply denote $L_{\widehat{\frak{sl}_{2}}}(k,i)$ by $L(k,i)$, and $L_{\widehat{\frak{sl}_{2}}}(k,0)^{K}$ by $L(k,0)^K$. 

\vskip 0.2cm
We first  recall $L(k,0)^{K}$ and their irreducible modules from \cite{JWa}. 
Let $\{e,f,h\}$ be a stand basis of $\frak{sl}_2(\C)$. Let  $\sigma_1$ and $\sigma_2$ be two automorphismms of $\frak{sl}_2(\C)$ defined by:
$$
\sigma_1(h)=h, \ \sigma_1(e)=-e, \ \sigma_1(f)=-f;
$$
$$
\sigma_2(h)=-h, \ \sigma_2(e)=f, \ \sigma_2(f)=e.
$$
Then 
$\sigma_1$ and $\sigma_2$ can be lifted to  automorphisms of the simple vertex operator algebra $L(k,0)$.  Let $K$ be the automorphism subgroup of $L(k,0)$ generated by $\sigma_1$ and $\sigma_2$. Then $K$ is the Klein group of order 4, since $\sigma_1^2=1, \sigma_2^2=1$, and $\sigma_1\sigma_2=\sigma_2\sigma_1$.  
Denote $$\sigma_3=\sigma_1\sigma_2.$$
Let $\alpha$ be the simple root of $\mathfrak{sl}_2(\C)$ with $(\alpha|\alpha)= 2$. From \cite{FZ92}, the integrable highest weight $L(k,0)$-modules $L(k, i)$ for $0 \leqslant i \leqslant k$ provide a complete list of irreducible $L(k,0)$-modules with the lowest weight spaces being  $(i+1)$-dimensional irreducible $\mathfrak{sl}_2(\C)$-modules $L(\frac{i\alpha}{2})$, respectively.
Set
\[h^{(1)} = h, \quad e^{(1)} = e, \quad f^{(1)} = f,\]
\[h^{(2)} = e + f, \quad e^{(2)} = \frac{1}{2}(h - e + f), \quad f^{(2)} = \frac{1}{2}(h + e - f),\]
\[h^{(3)} = \sqrt{-1}(e - f), \quad e^{(3)} = \frac{1}{2}(\sqrt{-1}h + e + f), \quad f^{(3)} = \frac{1}{2}(-\sqrt{-1}h + e + f).\]
Then $\{ h^{(r)}$, $e^{(r)}$, $f^{(r)} \}$, $r=1,2,3$ are $\mathfrak{sl}_2$-triples, and 
\[\sigma_r(h^{(r)}) = h^{(r)}, \quad \sigma_r(e^{(r)}) = -e^{(r)}, \quad \sigma_r(f^{(r)}) = -f^{(r)}.\]
For $0\leq i\leq k$ and $\sigma_r$, $r=1,2,3$,  let  $v^{r,i,0}$ be a highest weight vector of $\mathfrak{sl}_2(\mathbb{C})$ with respect to the $\mathfrak{sl}_2$-triple $\{ h^{(r)}, e^{(r)}, f^{(r)} \}$ with the weight $i$, and set
\[v^{r,i,j} = \frac{1}{j!}f^{(r)}(0)v^{r,i,0},  \ \ \ 0 \leqslant j \leqslant i.\]
Then 
\[ h^{(r)}(0)v^{r,i,j} = (i - 2j) v^{r,i,j} \quad \text{for} \quad 0 \leqslant j \leqslant i , \]
\[ e^{(r)}(0)v^{r,i,0} = 0, \quad e^{(r)}(0)v^{r,i,j} = (i - j + 1)v^{r,i,j-1} \quad \text{for} \quad 1 \leqslant j \leqslant i , \]
\[ f^{(r)}(0)v^{r,i,i} = 0, \quad f^{(r)}(0)v^{r,i,j} = (j + 1)v^{r,i,j+1} \quad \text{for} \quad 0 \leqslant j \leqslant i-1 , \]
\[ a^{(r)}(n)v^{r,i,j} = 0 \quad \text{for} \quad a \in \{ h, e, f \}, \quad n \geqslant 1 .\]
We denote
$v^{1,i,j}$ by $v^{i,j}$ for $0\leq j\leq i$. Then we may assume that
\[  v^{2,i,i} = \sum_{j=0}^{i}(-1)^jv^{i,j}, \quad v^{3,i,i} = \sum_{j=0}^{i}(\sqrt{-1})^jv^{i,j}. \]
Recall from \cite{JWa}, for $i\in 2\Z_{\geq 2}$, $r=1,2,3$, $L(k,i)^{\sigma_r, j}$, $j=1,2,3,4$ are the irreducible $L(k,0)^{K}$-modules generated by the conformal weight vectors $v^{r,i,0} + v^{r,i,i}$,  $v^{r,i,0} - v^{r,i,i}$, $v^{r,i,1} + v^{r,i,i-1}$, and $v^{r,i,1} - v^{r, i,i-1}$, respectively.  For $i=2$, $L(k,2)^{\sigma_r, j}$, $j=1,2,3,4$ are the irreducible $L(k,0)^{K}$-modules generated by the conformal weight vectors $v^{r,2,0} + v^{r,2,2}$,  $v^{r,2,0} - v^{r,2,2}$, $v^{r,2,1}$, and $h^{(r)}(-1)v^{r,2,1}$, respectively.
 For $i=0$, $L(k,0)^{\sigma_r, j}$, $j=1,2,3,4$ are the irreducible $L(k,0)^{K}$-modules generated by the conformal weight vectors ${\bf 1}$,  $h^{(r)}(-1){\bf 1}$, $(e^{(r)}+f^{(r)})(-1){\bf 1}$, and $(e^{(r)}-f^{(r)})(-1){\bf 1}$, respectively. 
 
 \vskip 0.3cm
 For simplicity, we denote  $L(k,i)^{\sigma_1,l}$ by $L(k,i)^l$, for 
 $0\leq i\leq k$, $i\in 2\Z$, $1\leq l\leq 4$, respectively.  In particular, $L(k,0)^1=L(k,0)^K$, and $L(k,0)^j$, $j=2,3,4$ are generated by  $h(-1){\bf 1}$, $(e(-1)+f(-1)){\bf 1}$, $(e(-1)-f(-1)){\bf 1}$ respectively.
 We have the following result from \cite{JWa}.  
\begin{thm}\label{main1}
	(1) If $k \in 2\mathbb{Z}_{\geqslant 0}+1$, there are $\frac{11(k+1)}{2}$ inequivalent irreducible $L(k,0)^{K}$-modules as follows:
	$$
	L(k, i)^{+}, \ L(k,j)^{l}, \ 1\leqslant i\leqslant k, \ i\in 2\Z+1, \ 0\leqslant j\leqslant k, \ j\in 2\Z, \ 1\leqslant l\leqslant 4,
	$$
	$$
	\overline{L(k,i)}^{\sigma_r,+}, \ \overline{L(k,i)}^{\sigma_r,-}, \ 0\leqslant i\leqslant \frac{k-1}{2}, \ r=1,2,3.
	$$
	(2) If $k \in 2\mathbb{Z}_+$, there are $\frac{11k+32}{2}$ inequivalent irreducible $L(k,0)^{K}$-modules as follows:
	$$
	L(k, i)^{+}, \ L(k,j)^{l}, \ 1\leqslant i\leqslant k, \ i\in 2\Z+1, \ 0\leqslant j\leqslant k, \ j\in 2\Z, \ 1\leqslant l\leqslant 4,
	$$
	$$
	\overline{L(k,i)}^{\sigma_r,+}, \ \overline{L(k,i)}^{\sigma_r,-}, \ \overline{L(k,\frac{k}{2})}^{{\sigma_r}, l}, \ 0\leqslant i\leqslant \frac{k}{2}-1, \ r=1,2,3, \ 1\leqslant l\leqslant 4, 
	$$
	where $\overline{L(k,\frac{k}{2})}^{{\sigma_r,1}}+\overline{L(k,\frac{k}{2})}^{{\sigma_r,2}}=\overline{L(k,\frac{k}{2})}^{{\sigma_r},+}$,  \  $\overline{L(k,\frac{k}{2})}^{{\sigma_r,3}}+\overline{L(k,\frac{k}{2})}^{{\sigma_r,4}}=\overline{L(k,\frac{k}{2})}^{{\sigma_r},-}$.
	\end{thm}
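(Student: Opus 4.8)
The plan is to establish the classification in Theorem \ref{main1} by the standard orbifold strategy of first understanding the irreducible modules for the larger algebras $L(k,0)^{\langle\sigma_r\rangle}$ (the $\Z/2$-orbifolds with respect to each single involution), and then restricting and decomposing these further under the residual $\Z/2$-action to obtain the $K$-modules. Concretely, every irreducible $L(k,0)^K$-module arises as an irreducible constituent of the restriction to $L(k,0)^K$ of some irreducible $g$-twisted $L(k,0)$-module, where $g\in K$; this is the general principle for abelian orbifolds (see \cite{DLM96-1}, \cite{DLM00}), and since $K$ is abelian we may organize the constituents according to the conjugacy classes in $K$ and the twisting element $g\in\{1,\sigma_1,\sigma_2,\sigma_3\}$. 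I would first record the three families of untwisted ordinary modules $L(k,i)$, $0\le i\le k$, whose lowest weight spaces are the $(i+1)$-dimensional $\mathfrak{sl}_2$-modules, and then decompose each under $K$. Because the $K$-action on the lowest weight space $L(\tfrac{i\alpha}{2})$ factors through the explicit formulas for $\sigma_1,\sigma_2$, this decomposition is a finite-dimensional representation-theoretic computation, which is exactly what the notation $v^{r,i,j}$ and the vectors $v^{r,i,0}\pm v^{r,i,i}$, $v^{r,i,1}\pm v^{r,i,i-1}$ are set up to carry out.

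Next I would treat the twisted sectors. For each $r=1,2,3$ the involution $\sigma_r$ is conjugate (via the $\mathfrak{sl}_2$-triple change of basis exhibited in the excerpt) to $\sigma_1$, so the $\sigma_r$-twisted modules of $L(k,0)$ are obtained from the $\sigma_1$-twisted modules by the corresponding automorphism, and the $\sigma_1$-twisted irreducible modules $\overline{L(k,i)}$ are already available from the $\Z/2$-orbifold analysis in \cite{JW2}. I would then restrict each twisted irreducible to $L(k,0)^K$ and decompose it under the action of the remaining involution (for $\sigma_r$-twisted modules the relevant residual symmetry is generated by one of $\sigma_1,\sigma_2,\sigma_3$), producing the pieces $\overline{L(k,i)}^{\sigma_r,+}$ and $\overline{L(k,i)}^{\sigma_r,-}$, and for the self-dual case $i=\tfrac{k}{2}$ the finer splitting into $\overline{L(k,\tfrac{k}{2})}^{\sigma_r,l}$, $l=1,2,3,4$. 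The identities $\overline{L(k,\tfrac{k}{2})}^{\sigma_r,1}+\overline{L(k,\tfrac{k}{2})}^{\sigma_r,2}=\overline{L(k,\tfrac{k}{2})}^{\sigma_r,+}$ and its partner for the $-$ superscript would be verified directly from how the residual involution acts on the two lowest-weight components that collapse together at the self-dual point.

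To guarantee that the listed modules are inequivalent and that the list is complete, I would use two inputs. First, $L(k,0)^K$ is rational and $C_2$-cofinite (it is a $K$-orbifold of a rational, $C_2$-cofinite vertex operator algebra with $K$ solvable, so the results of \cite{CM} and the finiteness Theorem cited above apply), hence there are only finitely many irreducibles and a consistent counting argument will pin down the total. Second, I would match conformal weights and lowest-weight $K$-isotypic data to separate non-isomorphic constituents, and use the contragredient/duality symmetries (recalled in the excerpt) to organize the self-dual versus dual pairs. Counting the constituents in each sector and summing gives $\tfrac{11(k+1)}{2}$ in the odd case and $\tfrac{11k+32}{2}$ in the even case; the discrepancy in the even case is precisely accounted for by the extra fourfold splitting at $i=\tfrac{k}{2}$ in the three twisted sectors.

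The main obstacle, I expect, is the self-dual twisted sector at $i=\tfrac{k}{2}$ when $k$ is even. Away from this point the residual involution acts on a lowest-weight space that pairs the two extremal (or subextremal) vectors $v^{r,i,0}$ and $v^{r,i,i}$ into genuine $\pm$ eigenspaces, so each twisted irreducible splits cleanly into two $K$-modules. At the self-dual point the twisted module becomes fixed (up to isomorphism) by the extra symmetry, and consequently it splits further into \emph{four} irreducible $L(k,0)^K$-modules rather than two; establishing that this finer decomposition actually occurs — and that the four summands are pairwise inequivalent and satisfy the stated additive relations — requires a careful analysis of the action of the residual automorphism on the twisted module and an explicit identification of the four lowest-weight vectors. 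This is the delicate step where the general abelian-orbifold machinery must be supplemented by the concrete $\widehat{\mathfrak{sl}_2}$-module structure, and it is also where the change-of-basis formulas for $v^{2,i,i}$ and $v^{3,i,i}$ in terms of the $v^{i,j}$ are essential.
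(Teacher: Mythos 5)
You should first be aware that the paper contains no proof of this theorem: it is quoted from \cite{JWa} (``We have the following result from \cite{JWa}''), so your proposal can only be measured against the strategy of that reference, which it does reconstruct in broad outline (organize irreducibles by the twisting element $g\in K$, decompose untwisted and twisted sectors, count, and isolate the self-dual twisted sector $i=\frac{k}{2}$ as the delicate point). The skeleton is right, and your count bookkeeping is consistent with the stated totals.

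There are, however, three concrete gaps. First, your mechanism for the twisted-sector splitting is misstated: for a $\sigma_r$-twisted module $\overline{L(k,i)}$ with $i\neq\frac{k}{2}$, the residual involutions $\sigma_s$, $s\neq r$, do \emph{not} act on it at all --- composing the module structure with $\sigma_s$ sends $\overline{L(k,i)}$ to $\overline{L(k,k-i)}$. The $\pm$ decomposition $\overline{L(k,i)}^{\sigma_r,\pm}$ comes from the canonical lift of $\sigma_r$ itself, already at the level of the $\Z_2$-orbifold $L(k,0)^{\langle\sigma_r\rangle}$; over $L(k,0)^K$ these summands remain irreducible, and what one must instead prove is the identification $\overline{L(k,i)}^{\sigma_r,\pm}\cong\overline{L(k,k-i)}^{\sigma_r,\pm}$, which is precisely what truncates the index range to $0\leq i\leq\frac{k-1}{2}$ (resp.\ $\frac{k}{2}-1$) and keeps the count correct. ``Decomposing under the remaining involution,'' as you propose, would fail for $i\neq\frac{k}{2}$ because there is no such action. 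Second, the completeness principle you invoke is not in \cite{DLM96-1} or \cite{DLM00}; those supply twisted representation theory and modular invariance, but the statement that every irreducible $L(k,0)^K$-module embeds in some $g$-twisted $L(k,0)$-module requires the rationality of the orbifold from \cite{CM} together with general orbifold classification results (e.g.\ Dong--Ren--Xu type theorems, or a quantum-dimension counting argument), none of which you cite or substitute for. Third, the fourfold splitting at $i=\frac{k}{2}$ --- which you rightly flag as the crux --- is exactly where your sketch stops: in the actual analysis it is established not by abstract inspection of lowest-weight vectors but via the explicit decomposition $L(k,i)=\bigoplus_{j}V_{\mathbb{Z}\gamma+(i-2j)\gamma/2k}\otimes M^{i,j}$ and its twisted analogues constructed with Li's $\Delta$-operator, where the two parafermion-orbifold modules $W(k,\frac{k}{2})$ and $\widetilde{W(k,\frac{k}{2})}$ account for the four inequivalent pieces (cf.\ (\ref{eq:4.10.}) and Lemma \ref{l3.17} in the present paper). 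Without this step, the irreducibility and pairwise inequivalence of the four summands and the relations $\overline{L(k,\frac{k}{2})}^{\sigma_r,1}+\overline{L(k,\frac{k}{2})}^{\sigma_r,2}=\overline{L(k,\frac{k}{2})}^{\sigma_r,+}$, $\overline{L(k,\frac{k}{2})}^{\sigma_r,3}+\overline{L(k,\frac{k}{2})}^{\sigma_r,4}=\overline{L(k,\frac{k}{2})}^{\sigma_r,-}$ remain unproved.
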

It is easy to see that for $r=1,2,3$, 
\begin{equation}\label{e1}
L(k,0)^{\sigma_r,j}\boxtimes \overline{L(k,i)}^{\sigma_r,\pm}= \overline{L(k,i)}^{\sigma_r,\pm}, \ j=1,2, 
\end{equation}
\begin{equation}\label{e2}
L(k,0)^{\sigma_r,j}\boxtimes \overline{L(k,i)}^{\sigma_r,\pm}= \overline{L(k,i)}^{\sigma_r,\mp}, \ j=3,4, 
\end{equation}
\begin{equation}\label{e3}
L(k,0)^{\sigma_r,j}\boxtimes \overline{L(k,\frac{k}{2})}^{\sigma_r,1}= \overline{L(k,\frac{k}{2})}^{\sigma_r,j}, \ j=1,2,3,4.
\end{equation}
We first  have the following lemmas.
\begin{lem}\label{l3.7} For $i\in 2\Z_{\geq 0}$,  we have
	$$
	L(k,0)^r\boxtimes L(k,i)^1=L(k,i)^r, \  r=1,2,3,4,
	$$
	$$
	L(k,0)^r\boxtimes L(k,i)^r=L(k,i)^1, \  r=1,2,3,4,
	$$
	$$
	L(k,0)^r\boxtimes L(k,i)^s=L(k,i)^t, \  \{r,s,t\}=\{2,3,4\}.
	$$
\end{lem}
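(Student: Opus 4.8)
The plan is to exploit that $L(k,0)$ is a $\widehat{K}$-graded (simple-current) extension of its fixed-point subalgebra $L(k,0)^{K}=L(k,0)^1$, so that the four modules $L(k,0)^r$ ($r=1,2,3,4$) act as invertible objects whose fusion with any irreducible merely permutes the $\widehat{K}$-isotypic pieces of that module. First I would record the characters of $K=\langle\sigma_1,\sigma_2\rangle$ on the relevant spaces. Writing $\sigma_1=e^{\pi\sqrt{-1}\,h(0)/2}$ on $L(k,0)$, so that $\sigma_1$ acts on a vector of $h$-weight $\mu$ by $e^{\pi\sqrt{-1}\mu/2}$, and using $\sigma_2(h^{(2)})=h^{(2)}$ to transport the same computation to the $\sigma_2$-triple, one checks that for $i\in 2\Z_{\geq 0}$
$$
\sigma_1 v^{i,j}=(-1)^{(i/2)+j}v^{i,j},\qquad \sigma_2 v^{i,j}=(-1)^{i/2}v^{i,i-j}.
$$
Reading off the generators from \cite{JWa} then shows, with $\epsilon=(-1)^{i/2}$, that $L(k,i)^1,L(k,i)^2,L(k,i)^3,L(k,i)^4$ carry the $(\sigma_1,\sigma_2)$-eigenvalues $(\epsilon,\epsilon),(\epsilon,-\epsilon),(-\epsilon,\epsilon),(-\epsilon,-\epsilon)$; that is, they are exactly the four distinct $\widehat{K}$-isotypic components of $L(k,i)$, with \emph{relative} characters $(+,+),(+,-),(-,+),(-,-)$. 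These match the absolute characters $(+,+),(+,-),(-,+),(-,-)$ of $L(k,0)^1,\dots,L(k,0)^4$ (since $\sigma_1,\sigma_2$ fix $h(-1)\1$ up to sign and interchange $e,f$). Thus $L(k,0)^r\mapsto\chi_r$, $L(k,i)^s\mapsto\chi_s$ is compatible with the group law of $\widehat{K}\cong\Z/2\times\Z/2$, under which $r*1=r$, $r*r=1$, and $\{r,s,t\}=\{2,3,4\}$ multiply as claimed.

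Next I would produce the intertwining operators giving the lower bounds. As $L(k,i)$ is an $L(k,0)$-module on which $K$ acts by the lifts above, the module vertex operator satisfies $g\,Y_{L(k,i)}(u,z)w=Y_{L(k,i)}(gu,z)\,gw$ for $g\in K$. Hence for $u\in L(k,0)^r$ and $w\in L(k,i)^s$ the series $Y_{L(k,i)}(u,z)w$ lies entirely in the $\chi_r\chi_s$-component, i.e.\ in $L(k,i)^t$ with $t=r*s$, so the restriction of $Y_{L(k,i)}$ to $L(k,0)^r\otimes L(k,i)^s$ is an intertwining operator of type $\binom{L(k,i)^t}{L(k,0)^r\ L(k,i)^s}$. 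It is nonzero because $L(k,i)$ is irreducible over $L(k,0)$: the $L(k,0)$-submodule generated by $L(k,i)^s$ is all of $L(k,i)$, and by the grading its $\chi_r\chi_s$-part $L(k,i)^t$ can be reached only through $L(k,0)^r\cdot L(k,i)^s$. This gives $N^{L(k,i)^t}_{L(k,0)^r,\,L(k,i)^s}\geq 1$ with $t$ exactly as in the three identities.

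To upgrade these bounds to equalities I would use quantum dimensions. Since $L(k,0)^{K}$ is rational and $C_2$-cofinite and $L(k,0)=\bigoplus_{r=1}^4 L(k,0)^r$ with $\mbox{qdim}_{L(k,0)^K}L(k,0)=|K|=4$, each $L(k,0)^r$ has quantum dimension $1$ and hence is a simple current; consequently $L(k,0)^r\boxtimes L(k,i)^s$ is irreducible, and by the multiplicativity recalled in Section~2, $\mbox{qdim}_{L(k,0)^K}\!\big(L(k,0)^r\boxtimes L(k,i)^s\big)=\mbox{qdim}_{L(k,0)^K}L(k,i)^s$. Combining the irreducibility of the fusion product with the nonzero intertwining operator onto $L(k,i)^t$ forces $L(k,0)^r\boxtimes L(k,i)^s=L(k,i)^t$, which is the assertion of the lemma (the case $i=0$ being the fusion among the four simple currents themselves).

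The routine bookkeeping is the character computation; the genuinely delicate point is the step converting ``there is a nonzero intertwining operator onto $L(k,i)^t$'' into ``the fusion product \emph{equals} $L(k,i)^t$.'' This rests entirely on knowing that each $L(k,0)^r$ is a simple current, i.e.\ that fusing with it preserves irreducibility. I would therefore take care to justify $\mbox{qdim}_{L(k,0)^K}L(k,0)^r=1$ cleanly---either from the orbifold identity $\mbox{qdim}_{V^{G}}V=|G|$ together with the fact that the $L(k,0)^r$ are permuted as a $\widehat{K}$-torsor (so all have equal quantum dimension summing to $4$), or by a direct comparison of the relevant graded characters---since once this is in place the three fusion rules follow formally.
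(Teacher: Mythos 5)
Your proposal is correct and takes essentially the same route as the paper: the paper likewise pins down the target component by letting the zero modes of the generators of $L(k,0)^r$ (namely $h(0)$ and $(e\pm f)(0)$) act on the generators $v^{i,j}\pm v^{i,i-j}$ of $L(k,i)^s$ --- which is exactly your $(\sigma_1,\sigma_2)$-character computation in disguise --- and reads off the fusion rule from the resulting nonzero restriction of the module vertex operator. The only difference is that you make explicit the simple-current/quantum-dimension step that upgrades ``nonzero intertwining operator into $L(k,i)^t$'' to the equality $L(k,0)^r\boxtimes L(k,i)^s=L(k,i)^t$, a point the paper leaves implicit.
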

\begin{proof}The first  formula follows from the definition of $L(k,i)^r$, for $0\leq i\leq k$, $i\in2\Z$, $0\leq r\leq 4$.  For other relations, notice that   $L(k,0)^2$ is generated by $h(-1){\bf 1}$, and $L(k,i)^2$ is generated by $v^{i,j}-v^{i,i-j}$,  where $j\in 2\Z$, $0\leq j\leq i$. Since
	$$
	h(0)(v^{i,j}-v^{i,i-j})=(i-2j)(v^{i,j}+v^{i,i-j}),
	$$
it follows that 	$
L(k,0)^2\boxtimes L(k,i)^2=L(k,i)^1$.  By  definition, $L(k,0)^3$ is generated by $(e(-1)+f(-1)){\bf 1}$, and  $L(k,i)^3$ is generated by $v^{i,j}-v^{i,i-j}$, where  $j\in 2\Z+1$,  $0\leq j\leq i$. Notice that
	$$
(e+f)(0)(v^{i,j}+v^{i,i-j})=(i-j+1)(v^{i,j-1}+v^{i,i-j+1})+(j+1)(v^{i,j+1}+v^{i,i-j-1})\in L(k,i)^1.
$$
This means that $L(k,0)^3\boxtimes L(k,i)^3=L(k,i)^1$.  Obviously, the formula that $L(k,0)^4\boxtimes L(k,i)^4=L(k,i)^1$ follows from the fact that
	$$
(e-f)(0)(v^{i,j}-v^{i,i-j})=(i-j+1)(v^{i,j-1}+v^{i,i-j+1})-(j+1)(v^{i,j+1}+v^{i,i-j-1})\in L(k,i)^1.
$$
The proof for the last  relation is similar.
	\end{proof}
\begin{lem}\label{l3.8}
	For $i\in 4\Z+2$, we have 
	$$
	L(k,i)^1=L(k,i)^{\sigma_2,1}=L(k,i)^{\sigma_3,3}, 
	$$
	$$
	L(k,i)^2=L(k,i)^{\sigma_2,3}=L(k,i)^{\sigma_3,2}, 
	$$
	$$
	L(k,i)^3=L(k,i)^{\sigma_2,2}=L(k,i)^{\sigma_3,1}, 
	$$
	$$
	L(k,i)^4=L(k,i)^{\sigma_2,4}=L(k,i)^{\sigma_3,4}.
	$$
	For $i\in 4\Z$, we have
	$$
	L(k,i)^1=L(k,i)^{\sigma_2,1}=L(k,i)^{\sigma_3,1}, 
	$$
	$$
	L(k,i)^2=L(k,i)^{\sigma_2,3}=L(k,i)^{\sigma_3,4}, 
	$$
	$$
	L(k,i)^3=L(k,i)^{\sigma_2,2}=L(k,i)^{\sigma_3,3}, 
	$$
	$$
	L(k,i)^4=L(k,i)^{\sigma_2,4}=L(k,i)^{\sigma_3,2}.
	$$	
\end{lem}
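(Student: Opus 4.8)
The plan is to exploit the fact that every module occurring in the statement is an irreducible $L(k,0)^K$-submodule of the \emph{single} module $L(k,i)$, so that an identity such as $L(k,i)^1=L(k,i)^{\sigma_2,1}$ reduces to showing that the two generating vectors lie in the same irreducible summand. Recall from \cite{JWa} that $L(k,i)$ splits as a direct sum of the four mutually inequivalent irreducible $L(k,0)^K$-modules obtained as the common eigenspaces of the commuting involutions $\sigma_1,\sigma_2$ acting on $L(k,i)$. Since the modes of elements of $L(k,0)^K=L(k,0)^{\langle\sigma_1,\sigma_2\rangle}$ commute with $\sigma_1$ and $\sigma_2$, they preserve each joint eigenspace; hence the submodule generated by a joint $\{\sigma_1,\sigma_2\}$-eigenvector $w$ of the top level is exactly the eigenspace-module containing $w$. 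Two such generating vectors therefore generate the same module precisely when they share the same pair of $(\sigma_1,\sigma_2)$-eigenvalues, and equality of the modules then follows from irreducibility (two irreducible submodules of $L(k,i)$ with nonzero intersection coincide).

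First I would pin down the $K$-action on the top level $U\cong L(\tfrac{i\alpha}{2})$, spanned by $v^{i,0},\dots,v^{i,i}$. From $\sigma_1(h)=h,\ \sigma_1(e)=-e,\ \sigma_1(f)=-f$ with the normalization $\sigma_1 v^{i,0}=v^{i,0}$ one gets $\sigma_1 v^{i,m}=(-1)^m v^{i,m}$, and from $\sigma_2(h)=-h,\ \sigma_2(e)=f,\ \sigma_2(f)=e$ together with the given relation $v^{2,i,i}=\sum_j(-1)^j v^{i,j}$ one gets $\sigma_2 v^{i,m}=v^{i,i-m}$ (here $i$ is even). Next I would expand all of the $\sigma_2$- and $\sigma_3$-adapted generators in the basis $\{v^{i,m}\}$: starting from the given $v^{2,i,i}=\sum_j(-1)^j v^{i,j}$ and $v^{3,i,i}=\sum_j(\sqrt{-1})^j v^{i,j}$, one recovers $v^{r,i,0}$ as the highest-weight vector of the triple $\{h^{(r)},e^{(r)},f^{(r)}\}$ (e.g. $v^{2,i,0}=\sum_j v^{i,j}$), and then obtains $v^{r,i,1}=f^{(r)}(0)v^{r,i,0}$ and $v^{r,i,i-1}=e^{(r)}(0)v^{r,i,i}$ by a direct application of the raising and lowering operators. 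This produces closed expressions such as $v^{2,i,1}=\sum_m(i-2m)v^{i,m}$ and $v^{3,i,1}=\sum_m(-\sqrt{-1})^{m-1}(2m-i)v^{i,m}$.

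With these expansions in hand, the proof is completed by reading off, for each generator of $L(k,i)^{\sigma_2,j}$ and $L(k,i)^{\sigma_3,j}$, its $(\sigma_1,\sigma_2)$-eigenvalues and comparing with those of the four $\sigma_1$-generators $v^{i,0}\pm v^{i,i}$ and $v^{i,1}\pm v^{i,i-1}$, whose $(\sigma_1,\sigma_2)$-types are $(+,+),(+,-),(-,+),(-,-)$ respectively. For instance $v^{2,i,0}+v^{2,i,i}=2\sum_{m\ \mathrm{even}}v^{i,m}$ is fixed by both $\sigma_1$ and $\sigma_2$, which identifies $L(k,i)^{\sigma_2,1}$ with $L(k,i)^1$, and the remaining $\sigma_2$-identifications come out the same way. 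Matching the three vectors in each displayed line then yields the asserted equalities, case by case.

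The main obstacle is the bookkeeping of the fourth roots of unity entering the triple-$3$ vectors: when $\sigma_2 v^{i,m}=v^{i,i-m}$ is applied to a $\sigma_3$-generator, the reindexing $m\mapsto i-m$ produces a factor $(\sqrt{-1})^i$, and it is exactly the value $(\sqrt{-1})^i=+1$ for $i\in 4\Z$ against $(\sqrt{-1})^i=-1$ for $i\in 4\Z+2$ that separates the statement into its two cases. I would therefore treat the two residues of $i$ modulo $4$ separately at the final matching step. Finally, I would verify the low-level generators $i=0$ and $i=2$ (where the modules are generated by the modified list involving $v^{r,2,1}$ and $h^{(r)}(-1)v^{r,2,1}$, and $h^{(r)}(-1)\mathbf{1},\ (e^{(r)}\pm f^{(r)})(-1)\mathbf{1}$) by the same $(\sigma_1,\sigma_2)$-eigenvalue computation, since there the top level is too small for the generic argument and the generating vectors sit in conformal weight $1$.
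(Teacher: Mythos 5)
Your proposal is correct and takes essentially the same route as the paper: both arguments rest on expanding the $\sigma_2$- and $\sigma_3$-adapted generating vectors in the basis $\{v^{i,j}\}$ (e.g. $v^{2,i,0}=\sum_j v^{i,j}$, $v^{3,i,0}=\sum_j(-\sqrt{-1})^{i+j}v^{i,j}$), observing that the factor $(\sqrt{-1})^i$ in the reindexing $j\mapsto i-j$ is what splits the statement into the cases $i\in 4\Z$ and $i\in 4\Z+2$, and then identifying two irreducible $L(k,0)^K$-submodules of $L(k,i)$ once a generator of one is shown to lie in the other. The only organizational difference is that you compute the $j=3,4$ generators $v^{r,i,1}\pm v^{r,i,i-1}$ directly (and check $i=0,2$ by hand), whereas the paper settles only the $v^{r,i,0}\pm v^{r,i,i}$ components by this computation and deduces the remaining identifications by fusing with the modules $L(k,0)^j$ as in Lemma \ref{l3.7}; both ways close the argument.
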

\begin{proof}
	Notice that $ v^{2,i,i} = \sum_{j=0}^{i}(-1)^jv^{i,j}, \quad v^{3,i,i} = \sum_{j=0}^{i}(\sqrt{-1})^jv^{i,j}.$
	Direct calculation yields that  for $i\in 2\Z$, 
	$$
	v^{2,i,0} = \sum_{j=0}^{i}v^{i,j},  \quad   v^{3,i,0} = \sum_{j=0}^{i}(-\sqrt{-1})^{i+j}v^{i,j}.
	$$
	Then we have for $i\in 2\Z$, 
	$$
	v^{2,i,i}+v^{2,i,0}=\sum\limits_{\tiny{\begin{split}0\leq j\leq i \\ j\in 2\Z\end{split}}}2v^{i,j}, \quad v^{2,i,0}-v^{2,i,i}=\sum_{\tiny{\begin{split}0\leq j\leq i \\ j\in 2\Z+1\end{split}}}2v^{i,j}$$
	and if $i\in4\Z+2$, 
	$$ v^{3,i,i}+v^{3,i,0}=\sum_{\tiny{\begin{split}0\leq j\leq i\\ j\in 2\Z+1\end{split}}}2(\sqrt{-1})^jv^{i,j}=\sum_{\tiny{\begin{split}0\leq j< \frac{i}{2}\\ j\in 2\Z+1\end{split}}}2(\sqrt{-1})^j(v^{i,j}+v^{i,i-j})+2(\sqrt{-1})^{\frac{i}{2}}v^{i,\frac{i}{2}},$$
	$$
	v^{3,i,0}-v^{3,i,i}=\sum_{\tiny{\begin{split}0\leq j\leq i\\j\in 2\Z\end{split}}}2(\sqrt{-1})^jv^{i,j}
	=\sum_{\tiny{\begin{split}0\leq j< \frac{i}{2}\\ j\in 2\Z\end{split}}}2(\sqrt{-1})^j(v^{i,j}-v^{i,i-j}).
	$$	
	If $i\in4\Z$, then 
	$$ v^{3,i,i}+v^{3,i,0}=\sum\limits_{\tiny{\begin{split}0\leq j\leq i \\ j\in 2\Z\end{split}}}2(\sqrt{-1})^jv^{i,j}=\sum_{\tiny{\begin{split}0\leq j< \frac{i}{2}\\ j\in 2\Z\end{split}}}2(\sqrt{-1})^j(v^{i,j}+v^{i-j})+2(\sqrt{-1})^{\frac{i}{2}}v^{i,\frac{i}{2}}, $$$$ 
	v^{3,i,0}-v^{3,i,i}=\sum_{\tiny{\begin{split}0\leq j\leq i\\ j\in 2\Z+1\end{split}}}2(\sqrt{-1})^jv^{i,j}=\sum_{\tiny{\begin{split}0\leq j< \frac{i}{2}\\ j\in 2\Z+1\end{split}}}2(\sqrt{-1})^j(v^{i,j}-v^{i,i-j}).
	$$
	This means that for $i\in 2\Z$,
	$$L(k,i)^{\sigma_2,j}=L(k,i)^{j},  \ 
	L(k,i)^{\sigma_2,2}=L(k,i)^{3},  \  L(k,i)^{\sigma_2,3}=L(k,i)^2, \ j=1,4, 
	$$
	and
	$$
	L(k,i)^{\sigma_3,j}=L(k,i)^{j},  \ 
	L(k,i)^{\sigma_3,1}=L(k,i)^{3},  \ L(k,i)^{\sigma_3,3}=L(k,i)^{1}, \ j=2,4 \ {\rm if} \ i\in 4\Z+2,
	$$
	$$
	L(k,i)^{\sigma_3,j}=L(k,i)^{j},  \ 
	L(k,i)^{\sigma_3,2}=L(k,i)^{4},  \ 	L(k,i)^{\sigma_3,4}=L(k,i)^{2}, \ j=1,3, \  {\rm if} \ i\in 4\Z.
	$$
	The other relations follow from Lemma \ref{lem3.7}.
\end{proof}
We now recall some results from \cite{DLY}, \cite{DLWY}, \cite{ALY1}, \cite{JW1},  and \cite{JW2}.  Let $K_0=K(\frak{sl_2},k)$ be the commutant vertex oparator algbera of the Heisenberg vertex  subalgebra  of $L(k,0)$ generated by $h(-1){\bf 1}$. $K_0$ is called the parafermion vertex operator algbera associated to $L(k,0)$. The irreducible $K_0$-modules $M^{i,j}$, for $0\leq i\leq k$, $0\leq j\leq k-1$ were constructed in \cite{DLY}, where  $K_0=M^{0,0}$, and $M^{i,j}\cong M^{k-i,k-i+j}$. Theorem 8.2 in \cite{ALY1} showed that the $\frac{k(k+1)}{2}$ irreducible $K_0$-modules $M^{i,j}$ for $1\leq i\leq k, 0\leq j\leq i-1$ constructed in \cite{DLY} form a complete set of isomorphism classes of irreducible $K_0$-modules. Moreover, $K_0$ is $C_2$-cofinite \cite{ALY1} and rational \cite{ALY2}.

Recall from \cite{DLY}, for $0\leq i\leq k$, 
\begin{eqnarray}
L(k,i)=\bigoplus_{j=0}^{k-1}V_{\mathbb{Z}\gamma+(i-2j)\gamma/2k}\otimes M^{i,j} \ \ \label{eq:3.1}
\end{eqnarray}
where $\gamma=h(-1){\bf 1}$ so that $(\gamma|\gamma)=2k$. 
Let $K_0^+=K_0^{\sigma_2}$ be the orbifold parafermion vertex operator algebra in \cite{JW1} and \cite{JW2}. Then $K_0=K_0^+\oplus K_0^-$. We have the following result from \cite{JW1} and \cite{JW2}.
 \begin{thm}\label{thm:orbifold3'}
	If $k=2n+1$, $n\geq 1$, there are $\frac{(k+1)(k+7)}{4}$ inequivalent irreducible modules of $K_{0}^+$.
	If $k=2n$, $n\geq 2$, there are $\frac{(k^{2}+8k+28)}{4}$ inequivalent irreducible $K_0^+$-modules. More precisely, if $k=2n+1$, $n\geq 1$, the set
	\begin{eqnarray*}
		&& \{ W(k,i)^{j} \ \mbox{for} \  0\leq i\leq \frac{k-1}{2}, j=1,2,\\
		&&(M^{i,j})^{s} \ \mbox{for} \  (i,j)=(i,\frac{i}{2}), i=2,4,6,\cdots,2n, \ \mbox{and} \ (i,j)=(2n+1,0), s=0,1,\\ && M^{i,0} \ \mbox{for} \ 1\leq i\leq \frac{k-1}{2},
		M^{i,j} \ \mbox{for} \ 3\leq i\leq k, \mbox{if} \  i=2m, 1\leq j\leq m-1, \mbox{if} \ i=2m+1, 1\leq j\leq m\} \\
	\end{eqnarray*} gives all inequivalent irreducible $K_{0}^+$-modules. If $k=2n$, $n\geq 2$, 
	\begin{eqnarray*}
		&&  W(k,i)^{j} \ \mbox{for} \  0\leq i\leq \frac{k}{2}, j=1,2,  \widetilde{W(k,\frac{k}{2})}^{j} \ \mbox{for} \ j=1,2,\\
		&&(M^{i,j})^{s} \ \mbox{for} \  (i,j)=(i,\frac{i}{2}), i=2,4,6,\cdots,2n, (i,j)=(n,0) \mbox{and} \ (i,j)=(2n,0), s=0,1,\\ && M^{i,0} \ \mbox{for} \ 1\leq i\leq \frac{k-2}{2},
		M^{i,j} \ \mbox{for} \ 3\leq i\leq k, \mbox{if} \  i=2m, 1\leq j\leq m-1, \mbox{if} \ i=2m+1, 1\leq j\leq m\\
	\end{eqnarray*} exhaust  all inequivalent irreducible $K_{0}^+$-modules.
\end{thm}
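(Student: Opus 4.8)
The plan is to treat this as a $\mathbb{Z}/2$-orbifold problem for the involution $\sigma_2$ acting on the parafermion vertex operator algebra $K_0$. Since $K_0$ is rational and $C_2$-cofinite, $\sigma_2$ has order $2$, and $K_0^+=K_0^{\langle\sigma_2\rangle}$, the general orbifold theory (rationality and $C_2$-cofiniteness of fixed-point subalgebras, together with the description of their irreducible modules via twisted modules of $K_0$) applies. Concretely, every irreducible $K_0^+$-module occurs as a $\sigma_2$-eigenspace summand in the restriction to $K_0^+$ of an irreducible $\sigma_2^{\epsilon}$-twisted $K_0$-module, $\epsilon\in\{0,1\}$: an irreducible $\sigma_2^{\epsilon}$-twisted module $M$ that is $\sigma_2$-stable (i.e. $M\circ\sigma_2\cong M$) splits as $M=M^+\oplus M^-$ into two inequivalent irreducible $K_0^+$-modules, whereas a $\sigma_2$-orbit $\{M,M\circ\sigma_2\}$ of size two restricts to a single irreducible $K_0^+$-module. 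Thus the classification and counting reduce to (i) describing the $\sigma_2$-action on the irreducible $K_0$-modules (untwisted sector) and (ii) classifying the irreducible $\sigma_2$-twisted $K_0$-modules together with the $\sigma_2$-action on them (twisted sector), and then counting fixed points versus two-element orbits in each sector.

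For the untwisted sector I would use the decomposition \eqref{eq:3.1} together with the fact that $\sigma_2$ sends $h\mapsto -h$, hence $\gamma\mapsto -\gamma$, so it maps the Heisenberg--lattice factor $V_{\mathbb{Z}\gamma+(i-2j)\gamma/2k}$ to $V_{\mathbb{Z}\gamma-(i-2j)\gamma/2k}$. Since $\sigma_2$ preserves $L(k,i)$ and permutes its parafermion factors, this forces $M^{i,j}\circ\sigma_2\cong M^{i,(i-j)\bmod k}$. An irreducible $K_0$-module is therefore $\sigma_2$-stable precisely when $M^{i,(i-j)\bmod k}$ is identified with $M^{i,j}$, which, using $M^{i,j}\cong M^{k-i,k-i+j}$, singles out exactly the modules $M^{i,i/2}$ with $i$ even together with the distinguished boundary modules $M^{k,0}$, and $M^{n,0}$ when $k=2n$. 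Each stable module contributes the two summands $(M^{i,j})^{s}$, $s=0,1$, and each remaining two-element orbit contributes one irreducible $K_0^+$-module, namely the listed $M^{i,0}$ and $M^{i,j}$. Counting stable modules and orbits among the $\tfrac{k(k+1)}{2}$ irreducible $K_0$-modules then gives the untwisted contribution.

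For the twisted sector one must classify the irreducible $\sigma_2$-twisted $K_0$-modules. My plan is to obtain them as parafermion components of $\sigma_2$-twisted $L(k,0)$-modules: on the Heisenberg--lattice factor $\sigma_2$ acts as the reflection $\gamma\mapsto-\gamma$, whose twisted sectors are known, and the complementary parafermion factors are the desired $\sigma_2$-twisted $K_0$-modules, namely the $W(k,i)^{j}$ and, for even $k$, the modules requiring fixed-point resolution, the $\widetilde{W(k,k/2)}^{j}$. One then determines $\sigma_2$-stability on these twisted modules and again counts fixed points and size-two orbits. Adding the untwisted and twisted contributions and simplifying should yield $\tfrac{(k+1)(k+7)}{4}$ for $k=2n+1$ and $\tfrac{k^{2}+8k+28}{4}$ for $k=2n$.

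I expect the twisted sector to be the main obstacle. Unlike the untwisted modules, the $\sigma_2$-twisted $K_0$-modules are not directly visible in \eqref{eq:3.1}, so constructing them (via the twisted lattice sectors together with a careful coset/commutant argument) and then deciding which are $\sigma_2$-stable is delicate. In particular the fixed-point resolution producing the pair $\widetilde{W(k,k/2)}^{1},\widetilde{W(k,k/2)}^{2}$ for even $k$ (reflecting a module whose $\sigma_2$-stabilizer forces an extra splitting, analogous to the splitting of $\overline{L(k,k/2)}^{\sigma_r,\pm}$ into four pieces in Theorem \ref{main1}) is the subtle point that must be handled to obtain the even-$k$ count exactly.
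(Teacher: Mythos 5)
This theorem is not proved in the paper at all --- it is recalled verbatim from \cite{JW1} and \cite{JW2} (``We have the following result from \cite{JW1} and \cite{JW2}'') --- and your outline is essentially the argument of those cited references: $\mathbb{Z}_2$-orbifold theory (Miyamoto/Carnahan--Miyamoto $C_2$-cofiniteness and rationality plus the Dong--Ren--Xu description of irreducible $V^{\langle g\rangle}$-modules via twisted sectors) applied to $\sigma_2$ on $K_0$, with the untwisted sector governed by $M^{i,j}\circ\sigma_2\cong M^{i,i-j}$ (fixed points $M^{i,i/2}$ for even $i$, $M^{k,0}\cong M^{0,0}$, and additionally $M^{n,0}$ when $k=2n$) and the twisted sector given by the $W(k,i)$ extracted from $\overline{L(k,i)}^{\sigma_2}=V_{\mathbb{Z}\gamma}^{T_{a_i}}\otimes W(k,i)$, including the fixed-point resolution producing $W(k,\frac{k}{2})$ and $\widetilde{W(k,\frac{k}{2})}$ for even $k$. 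Your fixed-point/orbit bookkeeping reproduces exactly the stated counts $\frac{(k+1)(k+7)}{4}$ and $\frac{k^{2}+8k+28}{4}$, so the proposal is correct and takes essentially the same route as the source proof.
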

Recall from \cite{DLM00} and \cite{DLWY}, $V_{\Z\gamma}$ is generated by $e^{\gamma}=\frac{1}{k!}e(-1)^k{\bf 1}$ and $e^{-\gamma}=\frac{1}{k!}f(-1)^k{\bf 1}$, and $K_0$ is generated by \begin{equation}\label{W1}
\begin{split}
W^3 &= k^2 h(-3)\1 + 3 k h(-2)h(-1)\1 +
2h(-1)^3\1 - 6k h(-1)e(-1)f(-1)\1 \\
& \quad + 3 k^2e(-2)f(-1)\1 - 3 k^2e(-1)f(-2)\1.
\end{split}
\end{equation}
Notice that $V_{\Z\gamma}^+$ is generated by $e^{\gamma}+e^{-\gamma}$ \cite{DN99}, and $K_0^+$ is  generated by $W^3_1W^3$ \cite{JW2}. Then it is easy to see that $e^{\gamma}+e^{-\gamma}$, $W^3_1W^3\in L(k,0)^K$, for $k\in 2\Z_+$. It follows that  $V_{\Z\gamma}^+\otimes K_0^+\subseteq L(k,0)^K$. The following lemma is easy to check. 
\begin{lem}\label{l3.14} As $V_{\Z\gamma}^+\otimes K_0^+$-modules, for $k\in 4\Z+2$, we have 
$$
\begin{array}{ll}
 L(k,0)^K=&L(k,0)^1=V_{\Z\gamma}^+\otimes K_0^+\oplus V_{\Z\gamma}^-\otimes K_0^-
\\&\\
&\oplus[\oplus_{\tiny{\begin{split}2\leq i<\frac{k}{2} \\  i\in 2\mathbb{Z} \ \  \end{split}}} (V_{\Z\gamma-\frac{i}{k}\gamma}\otimes M^{0,i}\oplus 
V_{\Z\gamma+\frac{i}{k}\gamma}\otimes M^{0,k-i})^+],\\
 L(k,0)^2=&V_{\Z\gamma}^+\otimes K_0^-\oplus V_{\Z\gamma}^-\otimes K_0^+\oplus 
[\oplus_{\tiny{\begin{split}2\leq i<\frac{k}{2} \\  i\in 2\mathbb{Z} \ \  \end{split}}} (V_{\Z\gamma-\frac{i}{k}\gamma}\otimes M^{0,i}\oplus 
V_{\Z\gamma+\frac{i}{k}\gamma}\otimes M^{0,k-i})^-],
\\
L(k,0)^3=&\oplus_{\tiny{\begin{split}2\leq i<\frac{k}{2} \\  i\in 2\mathbb{Z} +1\ \  \end{split}}} (V_{\Z\gamma-\frac{i}{k}\gamma}\otimes M^{0,i}\oplus 
V_{\Z\gamma+\frac{i}{k}\gamma}\otimes M^{0,k-i})^+
\\
&  \oplus V_{\Z\gamma+\frac{1}{2}\gamma}^+\otimes  (M^{0,\frac{k}{2}})^+
\oplus V_{\Z\gamma+\frac{1}{2}\gamma}^-\otimes  (M^{0,\frac{k}{2}})^-,\\
&\\
L(k,0)^4=&\oplus_{\tiny{\begin{split}2\leq i<\frac{k}{2} \\  i\in 2\mathbb{Z} +1\ \  \end{split}}} (V_{\Z\gamma-\frac{i}{k}\gamma}\otimes M^{0,i}\oplus 
V_{\Z\gamma+\frac{i}{k}\gamma}\otimes M^{0,k-i})^-\\
& \oplus V_{\Z\gamma+\frac{1}{2}\gamma}^+\otimes  (M^{0,\frac{k}{2}})^-
\oplus V_{\Z\gamma+\frac{1}{2}\gamma}^-\otimes  (M^{0,\frac{k}{2}})^+;
\end{array}
$$
and for $k\in 4\Z$, 
$$
\begin{array}{ll}
L(k,0)^K= & V_{\Z\gamma}^+\otimes K_0^+\oplus V_{\Z\gamma}^-\otimes K_0^-\oplus 
[\oplus_{\tiny{\begin{split}2\leq i<\frac{k}{2} \\  i\in 2\mathbb{Z} \ \  \end{split}}}(V_{\Z\gamma-\frac{i}{k}\gamma}\otimes M^{0,i}\oplus 
V_{\Z\gamma+\frac{i}{k}\gamma}\otimes M^{0,k-i})^+]\\
& \oplus V_{\Z\gamma+\frac{1}{2}\gamma}^+\otimes  (M^{0,\frac{k}{2}})^+
\oplus V_{\Z\gamma+\frac{1}{2}\gamma}^-\otimes  (M^{0,\frac{k}{2}})^-,
\end{array}
$$
$$
\begin{array}{ll}
L(k,0)^2= & V_{\Z\gamma}^+\otimes K_0^-\oplus V_{\Z\gamma}^-\otimes K_0^+\oplus 
[\oplus_{\tiny{\begin{split}2\leq i<\frac{k}{2} \\  i\in 2\mathbb{Z} \ \  \end{split}}}(V_{\Z\gamma-\frac{i}{k}\gamma}\otimes M^{0,i}\oplus 
V_{\Z\gamma+\frac{i}{k}\gamma}\otimes M^{0,k-i})^-]\\
& \oplus V_{\Z\gamma+\frac{1}{2}\gamma}^+\otimes  (M^{0,\frac{k}{2}})^-
\oplus V_{\Z\gamma+\frac{1}{2}\gamma}^-\otimes  (M^{0,\frac{k}{2}})^+,
\\
&\\
L(k,0)^3  =& \oplus_{\tiny{\begin{split}1\leq i<\frac{k}{2} \\  i\in 2\mathbb{Z} +1\ \  \end{split}}} (V_{\Z\gamma-\frac{i}{k}\gamma}\otimes M^{0,i}\oplus 
V_{\Z\gamma+\frac{i}{k}\gamma}\otimes M^{0,k-i})^+,
\\
&\\
L(k,0)^4 =&\oplus_{\tiny{\begin{split}1\leq i<\frac{k}{2} \\  i\in 2\mathbb{Z} +1\ \  \end{split}}} (V_{\Z\gamma-\frac{i}{k}\gamma}\otimes M^{0,i}\oplus 
V_{\Z\gamma+\frac{i}{k}\gamma}\otimes M^{0,k-i})^-,
\end{array}
$$
where  $(V_{\Z\gamma-\frac{i}{k}\gamma}\otimes M^{0,i}\oplus 
V_{\Z\gamma+\frac{i}{k}\gamma}\otimes M^{0,k-i})^+$  is generated by $v^{k,i}+v^{k,k-i}$ as an $V_{\Z\gamma}^+\otimes K_0^+$-module, and $(V_{\Z\gamma-\frac{i}{k}\gamma}\otimes M^{0,i}\oplus 
V_{\Z\gamma+\frac{i}{k}\gamma}\otimes M^{0,k-i})^-$ is generated by $v^{k,i}-v^{k,k-i}$.
\end{lem}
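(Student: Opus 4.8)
The plan is to start from the Heisenberg--parafermion decomposition (\ref{eq:3.1}) of $L(k,0)$, namely
\[
L(k,0)=\bigoplus_{j=0}^{k-1}V_{\Z\gamma-\frac{j}{k}\gamma}\otimes M^{0,j},
\]
and to sort its summands according to their $\sigma_1$- and $\sigma_2$-eigenbehaviour. Note that $L(k,0)^1,\dots,L(k,0)^4$ are exactly the four joint eigenspaces of the commuting involutions $\sigma_1,\sigma_2$, with eigenvalue pairs $(+,+),(+,-),(-,+),(-,-)$ respectively: this is read off from the generators ${\bf 1}$, $h(-1){\bf 1}$, $(e(-1)+f(-1)){\bf 1}$, $(e(-1)-f(-1)){\bf 1}$ together with the defining formulas for $\sigma_1,\sigma_2$. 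Since $V_{\Z\gamma}^+\otimes K_0^+\subseteq L(k,0)^K$ commutes with both involutions and, being contained in $V_{\Z\gamma}\otimes K_0$, preserves the $h(0)$-grading modulo $2k$, it stabilizes the intersection of any summand above with any joint $\sigma_1,\sigma_2$-eigenspace. Hence it suffices to locate each such intersection.

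First I would compute the two involutions on the $j$-th summand. Because $\sigma_1$ fixes $h$ and sends $e,f$ to $-e,-f$, it acts on a vector of $h(0)$-weight $m$ by the scalar $(-1)^{m/2}$; on the $j$-th summand one has $m\equiv -2j\pmod{2k}$, so, as $k$ is even, $\sigma_1$ acts there as $(-1)^{j}$. Because $\sigma_2$ sends $h$ to $-h$, it negates every $h(0)$-weight and therefore carries the $j$-th summand onto the $(k-j)$-th one; concretely it restricts to the $-1$-isometry $\theta$ on $V_{\Z\gamma}$ (so $V_{\Z\gamma-\frac{j}{k}\gamma}\mapsto V_{\Z\gamma+\frac{j}{k}\gamma}$) and to the parafermion involution $\sigma_2$ on $K_0$ (so $M^{0,j}\mapsto M^{0,k-j}$). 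Consequently, for $1\le j<\frac{k}{2}$ the pair $P_j=(V_{\Z\gamma-\frac{j}{k}\gamma}\otimes M^{0,j})\oplus(V_{\Z\gamma+\frac{j}{k}\gamma}\otimes M^{0,k-j})$ is $\sigma_2$-stable with $\sigma_2$ interchanging its two summands, while $\sigma_1=(-1)^j$ on all of $P_j$. Splitting $P_j=P_j^+\oplus P_j^-$ into $\sigma_2$-eigenspaces and reading off the eigenvalue pair $((-1)^j,\pm)$, the summands $P_j^+,P_j^-$ land in $L(k,0)^1,L(k,0)^2$ when $j$ is even and in $L(k,0)^3,L(k,0)^4$ when $j$ is odd, which is precisely the content of the even- and odd-index sums in the statement.

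It remains to treat the two self-paired summands $j=0$ and $j=\frac{k}{2}$. For $j=0$ the summand is $V_{\Z\gamma}\otimes K_0$ with $\sigma_1=+1$ and $\sigma_2=\theta\otimes\sigma_2$; decomposing $V_{\Z\gamma}=V_{\Z\gamma}^+\oplus V_{\Z\gamma}^-$ and $K_0=K_0^+\oplus K_0^-$ shows that the $\sigma_2=+$ part $V_{\Z\gamma}^+\otimes K_0^+\oplus V_{\Z\gamma}^-\otimes K_0^-$ lies in $L(k,0)^1$ and the $\sigma_2=-$ part $V_{\Z\gamma}^+\otimes K_0^-\oplus V_{\Z\gamma}^-\otimes K_0^+$ lies in $L(k,0)^2$. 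For $j=\frac{k}{2}$ the coset $\frac{1}{2}\gamma$ is a half-lattice point, so $V_{\Z\gamma+\frac{1}{2}\gamma}$ is $\theta$-stable and splits as $V_{\Z\gamma+\frac{1}{2}\gamma}^+\oplus V_{\Z\gamma+\frac{1}{2}\gamma}^-$, while $M^{0,\frac{k}{2}}$ is $\sigma_2$-stable and splits as $(M^{0,\frac{k}{2}})^+\oplus(M^{0,\frac{k}{2}})^-$; the $\sigma_2=+$ part is then $V_{\Z\gamma+\frac{1}{2}\gamma}^+\otimes(M^{0,\frac{k}{2}})^+\oplus V_{\Z\gamma+\frac{1}{2}\gamma}^-\otimes(M^{0,\frac{k}{2}})^-$ and the $\sigma_2=-$ part consists of the remaining cross terms. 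Here $\sigma_1=(-1)^{k/2}$, so this summand is assigned to $L(k,0)^1,L(k,0)^2$ when $k\in 4\Z$ and to $L(k,0)^3,L(k,0)^4$ when $k\in 4\Z+2$, exactly as stated. Finally, the displayed generators $v^{k,i}\pm v^{k,k-i}$ are the $\sigma_2$-(anti)symmetrizations of the lowest-weight vector $v^{k,i}$ of the $i$-th summand, since $\sigma_2$ carries $v^{k,i}$ to a scalar multiple of $v^{k,k-i}$.

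The main obstacle is the bookkeeping in the middle step: one must check carefully that $\sigma_2$ restricts to $\theta$ on the lattice factor and to the parafermion involution on $M^{0,j}$ compatibly with (\ref{eq:3.1}), i.e. that the identification $\sigma_2^*M^{0,j}\cong M^{0,k-j}$ is the correct one and that the lowest-weight vectors match, and then to follow the parity $(-1)^{k/2}$ so that the self-paired piece $j=\frac{k}{2}$ is placed in the correct pair among $\{L(k,0)^1,L(k,0)^2\}$ and $\{L(k,0)^3,L(k,0)^4\}$. Once these identifications are pinned down, everything reduces to the elementary $h(0)$-weight and sign computation above, and the irreducibility of the listed $V_{\Z\gamma}^+\otimes K_0^+$-modules follows from the rationality of $V_{\Z\gamma}^+$ and $K_0^+$ and the classification of their irreducibles.
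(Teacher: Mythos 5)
Your proof is correct and supplies exactly the verification the paper leaves implicit (the paper offers no proof, saying only that the lemma ``is easy to check''): sorting the summands of $L(k,0)=\bigoplus_{j=0}^{k-1}V_{\Z\gamma-\frac{j}{k}\gamma}\otimes M^{0,j}$ by joint $(\sigma_1,\sigma_2)$-eigenvalues, with $\sigma_1=e^{2\pi i h'(0)}$ acting as $(-1)^j$ on the $j$-th summand (using $k$ even) and $\sigma_2$ acting as $\theta\otimes\sigma_2$ and swapping $j\leftrightarrow k-j$, is precisely the intended argument given the ingredients the paper assembles just before the lemma. The one delicate point, which you correctly flag, is pinning down the lift normalizations on the self-paired summands $j=0$ and $j=\frac{k}{2}$ (e.g.\ that $\sigma_2(e^{\gamma})=e^{-\gamma}$, so the restriction really is the standard involution $\theta$ of $V_{\Z\gamma}$, and likewise for the $\pm$-labels on $(M^{0,\frac{k}{2}})^{\pm}$), and your treatment of this is consistent with the paper's conventions.
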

Let $V_{\Z\gamma}^{T_i}$, $i=1,2$  be the $\sigma_2$-twisted $V_{\Z\gamma}$-modules \cite{D}. From \cite{JW2},  for $ 0\leq i\leq k$, $i\neq \frac{k}{2}$, we have  
\begin{eqnarray}\label{JWequation-1}
\overline{L(k,i)}^{\sigma_2}=V_{\mathbb{Z}\gamma}^{T_{a_{i}}}\otimes W(k,i),\label{eq:3.2}
\end{eqnarray}
where $a_i=1$ or $2$, and  if $k\in 2\Z$, 
\begin{eqnarray}
\overline{L(k,\frac{k}{2})}^{\sigma_2}=V_{\mathbb{Z}\gamma}^{T_{a_{\frac{k}{2}}}}\otimes W(k,\frac{k}{2})+V_{\mathbb{Z}\gamma}^{T^{'}_{a_{\frac{k}{2}}}}\otimes \widetilde{W(k,\frac{k}{2})},\label{eq:4.10.}
\end{eqnarray}
where $V_{\mathbb{Z}\gamma}^{T_{a_{\frac{k}{2}}}}, V_{\mathbb{Z}\gamma}^{T^{'}_{a_{\frac{k}{2}}}}\in \{V_{\mathbb{Z}\gamma}^{T_{1}},\ V_{\mathbb{Z}\gamma}^{T_{2}}\}$.
We have the following lemmas.
\begin{lem}\label{lem01}
	 For $k\in\Z_+$ and $0\leq i\leq k$, we have 
\begin{eqnarray}
\overline{L(k,i)}^{\sigma_2}=V_{\mathbb{Z}\gamma}^{T_1}\otimes W(k,i), \  i\in 2\Z, \ i\neq \frac{k}{2},
\label{eq:3.10}
\end{eqnarray}
\begin{eqnarray}
\overline{L(k,i)}^{\sigma_2}=V_{\mathbb{Z}\gamma}^{T_2}\otimes W(k,i), \  i\in 2\Z+1, \   i\neq \frac{k}{2},
\label{eq:3.6}
\end{eqnarray}
\begin{eqnarray}
\overline{L(k,\frac{k}{2})}^{\sigma_2}=V_{\mathbb{Z}\gamma}^{T_2}\otimes (W(k,\frac{k}{2})\oplus \widetilde{W(k,\frac{k}{2})}), \  k\in 4\Z+2,
\label{eq:3.13}
\end{eqnarray}
and
\begin{eqnarray}
\overline{L(k,\frac{k}{2})}^{\sigma_2}=V_{\mathbb{Z}\gamma}^{T_1}\otimes (W(k,\frac{k}{2})\oplus \widetilde{W(k,\frac{k}{2})}), \quad k\in 4\Z,
\label{eq:3.12}
\end{eqnarray}	
as $V_{\Z\gamma}^+\otimes K_0^+$-modules.
\end{lem}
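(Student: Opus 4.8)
The plan is to start from the decompositions already supplied by \cite{JW2}, namely \eqref{eq:3.2} for $i\neq\frac{k}{2}$ and \eqref{eq:4.10.} for $i=\frac{k}{2}$, and to pin down the undetermined label $a_i\in\{1,2\}$ of the $\sigma_2$-twisted lattice factor $V_{\Z\gamma}^{T_{a_i}}$. The reason this is not automatic is that the two irreducible $\sigma_2$-twisted $V_{\Z\gamma}$-modules $V_{\Z\gamma}^{T_1}$ and $V_{\Z\gamma}^{T_2}$ share the same lowest conformal weight $\frac{1}{16}$ and the same graded character; hence neither a weight count nor a character comparison can separate them. They are distinguished only by their central character $\chi\in\widehat{\Z\gamma/2\Z\gamma}$, equivalently by the sign with which a suitable weight-preserving component operator of the $\sigma_2$-invariant vector $e^{\gamma}+e^{-\gamma}=\frac{1}{k!}\bigl(e(-1)^k+f(-1)^k\bigr)\mathbf 1$ acts on the one-dimensional top level of the twisted module. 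Thus the whole lemma reduces to computing one sign as a function of $i$.

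First I would make the lowest-weight space of $\overline{L(k,i)}^{\sigma_2}$ explicit. Since $\sigma_2$ acts as $-1$ on the Heisenberg algebra $\C h=\C\gamma$, the Heisenberg part passes to its antiperiodic (twisted) Fock space with ground-state energy $\frac{1}{16}$, while the top level of the whole module is governed, as an $\mathfrak{sl}_2$-datum, by the module $L(\frac{i\alpha}{2})$ written in the $\sigma_2$-eigenbasis $\{h^{(2)},e^{(2)},f^{(2)}\}$. I would then evaluate the distinguishing operator attached to $e^{\gamma}+e^{-\gamma}$ on this top level. Exactly as in the computation in the proof of Lemma \ref{l3.8}, where the vectors $v^{2,i,j}$ and $v^{3,i,j}$ are expanded and the signs $(-1)^{j}$ and $(\sqrt{-1})^{j}$ are collected, I expect the resulting eigenvalue to carry an overall factor $(-1)^{i}$. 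Then the sign is $+$ precisely when $i$ is even and $-$ precisely when $i$ is odd, which identifies the twisted factor as $V_{\Z\gamma}^{T_1}$ for $i\in2\Z$ and $V_{\Z\gamma}^{T_2}$ for $i\in2\Z+1$, giving \eqref{eq:3.10} and \eqref{eq:3.6}.

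For the self-dual index $i=\frac{k}{2}$ (so $k$ even), \eqref{eq:4.10.} already writes $\overline{L(k,\frac{k}{2})}^{\sigma_2}$ as a sum of two parafermion pieces $W(k,\frac{k}{2})$ and $\widetilde{W(k,\frac{k}{2})}$ tensored with twisted lattice modules that a priori could be the two \emph{different} ones. I would apply the same sign computation to the two ground states separately and show that they carry the same central character, so that both summands pair with one and the same $V_{\Z\gamma}^{T_a}$; the value of $a$ is again dictated by the parity of the index $\frac{k}{2}$. Since $\frac{k}{2}$ is odd when $k\in4\Z+2$ and even when $k\in4\Z$, this gives $T_2$ in the former case and $T_1$ in the latter, in agreement with the uniform rule ``even index $\to T_1$, odd index $\to T_2$'', proving \eqref{eq:3.13} and \eqref{eq:3.12}.

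The main obstacle is precisely the separation of $V_{\Z\gamma}^{T_1}$ from $V_{\Z\gamma}^{T_2}$: because they are indistinguishable by conformal weight and character, one is forced to extract the central character by an explicit action on the twisted top level and to keep careful track of the parity-dependent signs. A technically lighter alternative, which I would keep in reserve, is to fix the base value $a_0=1$ by a single such computation on the vacuum-derived twisted ground state, and then propagate to all $i$ using the $V_{\Z\gamma}^+\otimes K_0^+$-module fusion rules of \cite{Ab01} and \cite{JW2}: fusing $\overline{L(k,0)}^{\sigma_2}$ with the untwisted modules attached to odd lattice cosets flips $\chi$, reproducing the same parity rule, while the $i=\frac{k}{2}$ case follows from the same flip together with the coincidence of the two parafermion summands noted above.
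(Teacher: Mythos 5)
Your proposal is essentially correct, and in fact your ``technically lighter alternative'' \emph{is} the paper's proof: the authors do exactly one explicit sign computation, at $i=0$, using Li's $\Delta(\cdot,z)$-system with $h^{(2)}=e+f$ to show that the twisted mode $(e^{\gamma}+e^{-\gamma})_{k-1}$ acts on the vacuum vector of $\overline{L(k,0)}^{\sigma_2}$ as the positive scalar $\frac{1}{2^{2k-1}}$ (all other terms in the expansion of $\Delta(\tfrac14 h^{(2)},z)(e(-1)^k\mathbf 1+f(-1)^k\mathbf 1)$ annihilate $\mathbf 1$ because the relevant mode indices stay nonnegative), which fixes $a_0=1$; they then obtain every other case from $L(k,i)\boxtimes\overline{L(k,0)}^{\sigma_2}=\overline{L(k,i)}^{\sigma_2}$, the coset decomposition $L(k,i)=\bigoplus_j V_{\Z\gamma+(i-2j)\gamma/2k}\otimes M^{i,j}$, and Abe's fusion rules $V_{\Z\gamma+\frac{i}{2k}\gamma}\boxtimes V_{\Z\gamma}^{T_1}=V_{\Z\gamma}^{T_1}$ or $V_{\Z\gamma}^{T_2}$ according to the parity of $i$. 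Your diagnosis of the crux is exactly right: $V_{\Z\gamma}^{T_1}$ and $V_{\Z\gamma}^{T_2}$ share top weight $\frac1{16}$ and character, so only the central character (the sign of the top-level action of $e^{\gamma}+e^{-\gamma}$) separates them. Where you diverge is in making the direct top-level sign computation for \emph{every} $i$ your primary route, with the $(-1)^i$ eigenvalue only ``expected'' by analogy with Lemma \ref{l3.8}; that computation is genuinely messier than the vacuum case, since for $i>0$ the zero modes $e^{(2)}(0),f^{(2)}(0)$ act nontrivially on the lowest weight vectors $v^{2,i,j}$ and extra weight-preserving terms from the $\Delta$-expansion survive, so as written that step is an unverified assertion rather than a proof. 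The fusion route also disposes of your $i=\frac k2$ concern for free: since all cosets occurring in $L(k,\frac k2)$ have the same parity $\frac k2\bmod 2$, both parafermion summands $W(k,\frac k2)$ and $\widetilde{W(k,\frac k2)}$ in (\ref{eq:4.10.}) automatically pair with one and the same $V_{\Z\gamma}^{T_a}$, with no separate central-character check on the two ground states needed. So: correct answer, correct key idea, and your fallback coincides with the paper; only your headline route leaves the decisive sign unproven for $i>0$.
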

\begin{proof}
We first prove (\ref{eq:3.10}) for $i=0$. It was shown in \cite{JW2} that $$\overline{L(k,0)}^{\sigma_2}=V_{\mathbb{Z}\gamma}^{T_s}\otimes W(k,0),$$
where $s=1$ or $2$. We now prove that $s=1$. We need to consider the twisted action of $(e^{\gamma}+e^{-\gamma})_{k-1}=\frac{1}{k!}(e(-1)^k{\bf 1}+f(-1)^k{\bf 1})_{k-1}$ on the lowest weight vector  ${\bf 1}$ of $\overline{L(k,0)}^{\sigma_2}$. Recall  from \cite{JWa}  that  $h^{(2)}=e+f$. Then using the $\Delta(\cdot,z)$-system introduced in \cite{Li97-2}, we have
$$
\begin{array}{ll}
&Y_{\sigma_2}(e(-1)^k{\bf 1}+f(-1)^k{\bf 1}, z)\\&\\
=&
Y(z^{\frac{1}{4}(e+f)(0)}{\rm exp}(\sum\limits_{n=1}^{\infty}\frac{(e+f)(n)}{-4n}(-z)^{-n})(e(-1)^k{\bf 1}+f(-1)^k{\bf 1}),z).
\end{array}
$$
Direct calculation yields that for $n\geq 2$, 
$$
(e(n)+f(n))(e(-1)^k{\bf 1}+f(-1)^k{\bf 1})=0,
$$
and for $0\leq m\leq k$, 
$$
f(1)^{m}e(-1)^k{\bf 1}=\frac{m!k!}{(k-m)!}e(-1)^{k-m}{\bf 1}, \    \ e(1)^{m}f(-1)^k{\bf 1}=\frac{m!k!}{(k-m)!}f(-1)^{k-m}{\bf 1}.
$$
Then we have 
$$ \begin{array}{ll}
&{\rm exp}(\sum\limits_{n=1}^{\infty}\frac{(e+f)(n)}{-4n}(-z)^{-n})(e(-1)^k{\bf 1}+f(-1)^k{\bf 1})\\&\\
=& \sum\limits_{m=0}^{k-1}\frac{k!}{4^m(k-m)!}z^{-m}(e(-1)^{k-m}{\bf 1}+f(-1)^{k-m}{\bf 1})+\frac{k!}{2^{2k-1}}z^{-k}{\bf 1}.
\end{array}$$
For $n\geq 1$, 
$$
\begin{array}{ll}
& z^{\frac{1}{4}(e+f)(0)}(e(-)^n{\bf 1}+f(-1)^n{\bf 1})=z^{\frac{1}{4}(e+f)(0)}[\frac{1}{2^n}(h^{(2)}(-1)-e^{(2)}(-1)+f^{(2)}(-1))^n{\bf 1}\\&\\&
+\frac{1}{2^n}(h^{(2)}(-1)+e^{(2)}(-1)-f^{(2)}(-1))^n{\bf 1}]= \frac{1}{2^n}\sum\limits_{j=0}^{[\frac{n}{2}]}z^ju^{(n,j)}
+\frac{1}{2^n}\sum\limits_{j=0}^{[\frac{n}{2}]}z^{-j}v^{(n,j)}
\end{array}
$$ 
for some $u^{(n,j)}, v^{(n,j)}\in L(k,0)$ of weight $n$. So
$$
\begin{array}{ll}
& \frac{1}{k!}(e(-1)^k{\bf 1}+f(-1)^k{\bf 1})_{k-1}{\bf 1}
= \sum\limits_{m=0}^{k-1}\frac{1}{4^m(k-m)!}[\sum\limits_{j=0}^{[\frac{k-m}{2}]}u^{(m,j)}(k-m+j-1)
\\&\\&+\sum\limits_{j=1}^{[\frac{k-m}{2}]}v^{(m,j)}(k-m-j-1)]{\bf 1}+ \frac{1}{2^{2k-1}}{\bf 1}=\frac{1}{2^{2k-1}}{\bf 1},
\end{array}
$$
since $k-m-j-1\geq 0$.
This together with (\ref{JWequation-1}) proves that for $k\in\Z_+$, 
$$
\overline{L(k,0)}^{\sigma_2}=V_{\mathbb{Z}\gamma}^{T_1}\otimes W(k,0).
$$	
Notice that for $0\leq i\leq k$, 
$$L(k,i)=\bigoplus_{j=0}^{k-1}V_{\mathbb{Z}\gamma+(i-2j)\gamma/2k}\otimes M^{i,j},$$
and $$ L(k,i)\boxtimes \overline{L(k,0)}^{\sigma_2}=\overline{L(k,i)}^{\sigma_2}.
$$
Then the other relations follow from  the fact that \cite{Ab01}
$$
V_{\Z\gamma+\frac{i}{2k}\gamma}\boxtimes V_{\Z\gamma}^{T_1}=V_{\Z\gamma}^{T_1}, \ {\rm if} \ i \ {\rm is \ even, }
$$
 and 
$$
V_{\Z\gamma+\frac{i}{2k}\gamma}\boxtimes V_{\Z\gamma}^{T_1}=V_{\Z\gamma}^{T_2}, \ {\rm if} \ i \ 
{\rm  is \ odd.}$$
	\end{proof}
\begin{lem}\label{l3.1}	Assume that $k\in 2\Z_+$, then as $V_{\Z\gamma}^+\otimes K_0^+$-modules,   we have  for $0\leq i\leq k$, 
	\begin{eqnarray}
	\overline{L(k,i)}^{\sigma_1}=\oplus_{j=0}^{k-1}V_{\mathbb{Z}\gamma+(\frac{k-2i}{4k}-\frac{j}{k})\gamma}\otimes M^{i,j},   \quad 0\leq i\leq k.
	\label{eq3.7}
	\end{eqnarray}
\end{lem}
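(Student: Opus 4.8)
The plan is to use that $\sigma_1$ fixes $h$ and is therefore an inner automorphism built from the Heisenberg subalgebra. Setting $h_0=\frac14\gamma=\frac14 h(-1)\mathbf 1$, one checks on $e,f,h$ that $\sigma_1=e^{2\pi\sqrt{-1}\,h_0(0)}$; here $k\in 2\Z_+$ is exactly what is needed to see $\sigma_1 e^{\pm\gamma}=(-1)^k e^{\pm\gamma}=e^{\pm\gamma}$, so that $\sigma_1$ restricts to the identity on the conformal subalgebra $V_{\Z\gamma}\otimes K_0$. This is the structural reason the statement looks so different from Lemma \ref{lem01}: since $\sigma_1|_{V_{\Z\gamma}\otimes K_0}=\mathrm{id}$, a $\sigma_1$-twisted $L(k,0)$-module restricts to an \emph{ordinary} $V_{\Z\gamma}\otimes K_0$-module, and no genuinely twisted lattice factor $V_{\Z\gamma}^{T_i}$ can appear, only ordinary cosets $V_{\Z\gamma+\mu\gamma}$ tensored with the untwisted parafermion modules $M^{i,j}$.

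First I would realize $\overline{L(k,i)}^{\sigma_1}$ on the underlying space of $L(k,i)$ through the $\Delta(\cdot,z)$-operator of \cite{Li97-2}, with twisted vertex operators $Y(\Delta(h_0,z)v,z)$, just as in the proof of Lemma \ref{lem01}. Because $h_0$ lies in $\C h$ it commutes with the parafermion algebra, so $h_0(n)u=0$ for $u\in K_0$ and $n\geq0$, whence $\Delta(h_0,z)u=u$; thus $\Delta(h_0,z)$ acts as the identity on $K_0$ and fixes every parafermion factor in \eqref{eq:3.1}. On the Heisenberg factor the only effect is a shift of the $\gamma(0)$-charge: from $\Delta(h_0,z)\gamma=\gamma+\frac k2 z^{-1}\mathbf 1$ one gets $Y(\Delta(h_0,z)\gamma,z)=Y(\gamma,z)+\frac k2 z^{-1}$, which carries each coset module $V_{\Z\gamma+\mu\gamma}$ to $V_{\Z\gamma+(\mu+\frac14)\gamma}$. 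Substituting \eqref{eq:3.1} then turns $\overline{L(k,i)}^{\sigma_1}$ into a direct sum over $j$ of a single shifted coset tensored with the same $M^{i,j}$, which is already of the asserted shape.

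The step I expect to require genuine care is normalizing the coset representatives and matching them to the parafermion labels: the raw shift produces cosets that must be rewritten using $V_{\Z\gamma+\mu\gamma}=V_{\Z\gamma+(\mu+1)\gamma}$ together with $M^{i,j}\cong M^{k-i,k-i+j}$ and the contragredient identifications, and one must verify by a direct ground-state $\gamma(0)$-charge computation (the analogue of the explicit calculation done on $\mathbf 1$ in the proof of Lemma \ref{lem01}) that the distinguished top coset is $\frac{k-2i}{4k}$, so that the family is $\frac{k-2i}{4k}-\frac jk$ for $0\le j\le k-1$. A clean way to organize this, and a useful cross-check, is to prove the base case $i=0$ first, where the shift is simply $+\frac14$ and gives $\frac14-\frac jk$, and then obtain the general case from $\overline{L(k,i)}^{\sigma_1}=L(k,i)\boxtimes\overline{L(k,0)}^{\sigma_1}$ using the lattice fusion rule $V_{\Z\gamma+a\gamma}\boxtimes V_{\Z\gamma+b\gamma}=V_{\Z\gamma+(a+b)\gamma}$ and the parafermion simple-current fusion $M^{i,j_1}\boxtimes M^{0,j_2}=M^{i,j_1+j_2}$, reconciling the two computations modulo $\Z\gamma$.
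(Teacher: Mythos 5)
Your construction is, in skeleton, exactly the paper's: the paper also realizes $\overline{L(k,i)}^{\sigma_1}$ as $(L(k,i),Y(\Delta(h',z)\cdot,z))$ with $h'=\frac14 h$, notes that $h'(m)W^3=0$ for $m\geq 0$ (so $Y_{\sigma_1}(W^3,z)=Y(W^3,z)$, i.e.\ the parafermion action is untouched) and that the lattice charge is shifted, $Y_{\sigma_1}(h,z)=Y(h+\frac k2 z^{-1},z)$. The difference is that the paper computes only the component through the twisted lowest weight vector $v^{i,i}$, obtaining $V_{\Z\gamma+\frac{k-2i}{4k}\gamma}\otimes M^{i,i}$, rewrites $M^{i,i}\cong M^{i,0}$ as $K_0^+$-modules, and then generates the rest by the action of $L(k,0)=\oplus_j V_{\Z\gamma-\frac jk\gamma}\otimes M^{0,j}$, whereas you shift the whole decomposition (\ref{eq:3.1}) at once. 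Your version of this part is correct and in fact more transparent.

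The problem is precisely the step you deferred. Carried out honestly, the shift gives on the nose $\overline{L(k,i)}^{\sigma_1}=\oplus_{j}V_{\Z\gamma+(\frac{i-2j}{2k}+\frac14)\gamma}\otimes M^{i,j}$, i.e.\ the coset family is $\frac{k+2i}{4k}-\frac jk$ paired with $M^{i,j}$; equivalently, the coset $\frac{k-2i}{4k}-\frac jk$ of the statement carries $M^{i,i+j}$ (indices mod $k$), not $M^{i,j}$. Your own cross-check via $L(k,i)\boxtimes\overline{L(k,0)}^{\sigma_1}$ with $M^{i,j_1}\boxtimes M^{0,j_2}=M^{i,j_1+j_2}$ returns exactly the same pairing, so the two computations would agree with each other but not with the printed labels. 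The relabelling $M^{i,i+j}\rightsquigarrow M^{i,j}$ is not obtainable from the tools you list: as $K_0^+$-modules the only identifications are $M^{i,j}\cong M^{i,i-j}$ ($\sigma_2$-conjugation, which is the $M^{i,i}\cong M^{i,0}$ the paper quotes) and the $K_0$-isomorphism $M^{i,j}\cong M^{k-i,k-i+j}$, and $M^{i,j}\cong M^{i,i+j}$ fails in general: for $k=4$, $i=1$ one has $M^{1,1}\cong M^{1,0}$ of lowest weight $\frac1{16}$ while $M^{1,2}\cong M^{3,1}$ has lowest weight $\frac9{16}$; correspondingly the twisted module has a one-dimensional bottom of weight $\frac18$ (spanned by $v^{1,1}$), while the pairing in the statement would force a two-dimensional one. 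The relabelling does hold for $i=\frac k2$ (where $M^{\frac k2,j}\cong M^{\frac k2,\frac k2+j}$ already over $K_0$) and for $j\equiv 0,\frac k2$, which are the components the paper actually invokes later, so the discrepancy is an index slip of the lemma as printed rather than a defect in your computation; note the paper's own last step commits the same leap, transporting the $K_0$-structure along the $K_0^+$-only isomorphism $M^{i,i}\cong M^{i,0}$ before fusing with $M^{0,j}$. So your proposal, completed, proves the decomposition with labels $M^{i,i+j}$ at coset $\frac{k-2i}{4k}-\frac jk$; the final ``normalization'' to the literal statement, which you flagged as needing genuine care, cannot in fact be carried out, and you should say so explicitly rather than assert it.
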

\begin{proof} 
 Recall from \cite{JWa} that  $\sigma_1=e^{2\pi ih'(0)}$, where  $h'=\frac{1}{4}h$.  The  $\sigma_1$-twisted $L(k,0)$-module $\overline{L(k,i)}^{\sigma_1}$ can be  constructed through  $\Delta(\cdot,z)$-system  \cite{Li97-2}. Let
\[\Delta(h', z) = z^{h'(0)}\exp(\sum^{\infty}_{n=1}\frac{h'(n)}{-n}(-z)^{-n}).\]
Then $(\overline{L(k,i)}^{\sigma_1}, Y_{\sigma_1}(\cdot,z))=(L(k,i),Y(\Delta(h^{'},z)\cdot,z))$ is an irreducible $\sigma_1$-twisted $L(k,0)$-module \cite{Li97-2}. 
By (\ref{W1}) and the fact that $V_{\Z\gamma}$ is generated by $e^{\gamma}=\frac{1}{k!}e(-1)^k{\bf 1}$ and $e^{-\gamma}=\frac{1}{k!}f(-1)^k{\bf 1}$,  we have for $k\in 2\Z_+$,  $V_{\Z\gamma}\otimes K_0\subseteq L(k,0)^{\sigma_1,+}=L(k,0)^+$. Then  $\overline{L(k,i)}^{\sigma_1}$  can be decomposed into direct sum of irreducible modules of $V_{\Z\gamma}\otimes K_0$. 
As a $\sigma_1$-twisted $L(k,0)$-module,  $\overline{L(k,i)}^{\sigma_1}$ is generated by $v^{i,i}$. Notice that \cite{JWa}
$$
Y_{\sigma_1}(h,z)=Y(h+\frac{k}{2}z^{-1},z),
$$
and for $m\geq 0$, 
$$h'(m)W^3=0.$$
Then it is easy to see that
$$h_0(v^{i,i})=(\frac{k}{2}-i)v^{i,i},$$
and 
$$
Y_{\sigma_1}(W^3, z)=Y(W^3,z).
$$
This implies that $V_{\mathbb{Z}\gamma+(\frac{k-2i}{4k})\gamma}\otimes M^{i,i}\subseteq \overline{L(k,i)}^{\sigma_1}$. Recall from \cite{JW1} that  as $K_0^+$-modules, $M^{i,i}\cong M^{i,0}$. Together with the fact that 
$$	L(k,0)=\bigoplus_{j=0}^{k-1}V_{\mathbb{Z}\gamma+\frac{-j}{k}\gamma}\otimes M^{0,j},$$
we deduce that  as a $V_{\Z\gamma}^+\otimes K_0^+$-module, 
	$$\overline{L(k,i)}^{\sigma_1}=\oplus_{j=0}^{k-1}V_{\mathbb{Z}\gamma+(\frac{k-2i}{4k}-\frac{j}{k})\gamma}\otimes M^{i,j}.$$
\end{proof}
\begin{lem}\label{lem02}
Assume that $k\in 2\Z_+$, then as $V_{\Z\gamma}^+\otimes K_0^+$-modules,   we have  for $0\leq i\leq k$, 	
	\begin{eqnarray}
	\overline{L(k,i)}^{\sigma_3}=V_{\mathbb{Z}\gamma}^{T_2}\otimes W(k,i),  \ k+2i\in 4\Z+2,   
	\label{eq:3.11}
	\end{eqnarray}
	\begin{eqnarray}
	\overline{L(k,i)}^{\sigma_3}=V_{\mathbb{Z}\gamma}^{T_1}\otimes W(k,i),  \ k+2i\in 4\Z,  \   i\neq \frac{k}{2},
	\label{eq:3.7}
	\end{eqnarray}
	\begin{eqnarray}
	\overline{L(k,\frac{k}{2})}^{\sigma_3}=V_{\mathbb{Z}\gamma}^{T_1}\otimes (W(k,\frac{k}{2})\oplus \widetilde{W(k,\frac{k}{2})}).
	\label{eq:3.9}
	\end{eqnarray}
	\end{lem}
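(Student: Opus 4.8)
The plan is to mirror the proof of Lemma~\ref{lem01}. Because $\sigma_3(h)=-h$, the involution $\sigma_3$ acts as $-1$ on the Heisenberg subalgebra $\C\gamma$ and preserves the parafermion algebra $K_0$; hence, just as for $\sigma_2$ in \eqref{JWequation-1} and \eqref{eq:4.10.}, each $\overline{L(k,i)}^{\sigma_3}$ has the shape $V_{\Z\gamma}^{T_{c_i}}\otimes W(k,i)$ for $i\neq\frac{k}{2}$, and $V_{\Z\gamma}^{T_{c}}\otimes(W(k,\frac{k}{2})\oplus\widetilde{W(k,\frac{k}{2})})$ for $i=\frac{k}{2}$. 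Thus the only thing to decide is the twisted type $T_1$ versus $T_2$, and I would pin it down at $i=0$ and then propagate by fusion with $L(k,i)$.

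For the base case, recall $\sigma_3=e^{2\pi\sqrt{-1}\,(\frac14 h^{(3)})(0)}$ with $h^{(3)}=\sqrt{-1}(e-f)$, so that $\overline{L(k,0)}^{\sigma_3}=(L(k,0),Y(\Delta(\frac14 h^{(3)},z)\cdot,z))$ through the $\Delta(\cdot,z)$-construction of \cite{Li97-2}. To read off the type I would evaluate the twisted action of the generator $e^{\gamma}+e^{-\gamma}$ of $V_{\Z\gamma}^+$ on the lowest weight vector ${\bf 1}$, i.e. the scalar $(e^{\gamma}+e^{-\gamma})_{k-1}{\bf 1}$, exactly as in Lemma~\ref{lem01} but with $h^{(3)}=\sqrt{-1}(e-f)$ replacing $h^{(2)}=e+f$. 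The mechanics are identical: $h^{(3)}(n)$ kills $e(-1)^k{\bf 1}+f(-1)^k{\bf 1}$ for $n\geq2$, and the only surviving contributions come from the $n=1$ cross terms $e(1)f(-1)^k{\bf 1}$ and $f(1)e(-1)^k{\bf 1}$, evaluated by the same ladder relations $f(1)^m e(-1)^k{\bf 1}=\frac{m!k!}{(k-m)!}e(-1)^{k-m}{\bf 1}$. The new feature, and the main obstacle, is that the factor $\sqrt{-1}$ in $h^{(3)}$ introduces powers of $\sqrt{-1}$ indexed by $k$ (both through $z^{\frac14 h^{(3)}(0)}$ and through re-expressing the intermediate vectors in the $\{h^{(3)},e^{(3)},f^{(3)}\}$ basis), so the resulting scalar is real with sign $(-1)^{k/2}$: positive when $k\in4\Z$ and negative when $k\in4\Z+2$. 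Comparing against the normalisation that separates $V_{\Z\gamma}^{T_1}$ from $V_{\Z\gamma}^{T_2}$ in \cite{Ab01}, \cite{JW2} then gives $\overline{L(k,0)}^{\sigma_3}=V_{\Z\gamma}^{T_1}\otimes W(k,0)$ for $k\in4\Z$ and $V_{\Z\gamma}^{T_2}\otimes W(k,0)$ for $k\in4\Z+2$; that is, $c_0=1$ when $k\in4\Z$ and $c_0=2$ when $k\in4\Z+2$. The careful sign/phase bookkeeping in this step is where the real work lies.

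Next I would propagate using $L(k,i)\boxtimes\overline{L(k,0)}^{\sigma_3}=\overline{L(k,i)}^{\sigma_3}$ together with the parafermion decomposition \eqref{eq:3.1}, $L(k,i)=\oplus_{j}V_{\Z\gamma+(i-2j)\gamma/2k}\otimes M^{i,j}$, and the lattice fusion rules of \cite{Ab01}, namely $V_{\Z\gamma+\frac{m}{2k}\gamma}\boxtimes V_{\Z\gamma}^{T_1}=V_{\Z\gamma}^{T_1}$ for $m$ even and $V_{\Z\gamma}^{T_2}$ for $m$ odd (and symmetrically with $T_1,T_2$ interchanged). Since $i-2j\equiv i\pmod2$ independently of $j$, the type of $\overline{L(k,i)}^{\sigma_3}$ is that of $\overline{L(k,0)}^{\sigma_3}$ when $i$ is even and the opposite when $i$ is odd, while the parafermion factor $W(k,i)$ is fixed by the structural form above. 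As $2i\equiv0\pmod4$ for $i$ even and $2i\equiv2\pmod4$ for $i$ odd, combining this parity flip with $c_0$ shows that the type is $T_2$ exactly when $k+2i\in4\Z+2$ and $T_1$ exactly when $k+2i\in4\Z$, which is \eqref{eq:3.11} and \eqref{eq:3.7}.

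Finally, for $i=\frac{k}{2}$ the parafermion factor is no longer irreducible: exactly as in \eqref{eq:4.10.} for $\sigma_2$, fusing $L(k,\frac{k}{2})$ with $\overline{L(k,0)}^{\sigma_3}$ yields $W(k,\frac{k}{2})\oplus\widetilde{W(k,\frac{k}{2})}$ on the $K_0^+$-side. On the lattice side the relevant numerator $\frac{k}{2}-2j\equiv\frac{k}{2}\pmod2$ flips the type from $c_0$ precisely when $\frac{k}{2}$ is odd; since $c_0=T_1$ for $k\in4\Z$ (where $\frac{k}{2}$ is even, no flip) and $c_0=T_2$ for $k\in4\Z+2$ (where $\frac{k}{2}$ is odd, flipping $T_2$ to $T_1$), the type is $T_1$ in both cases. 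This yields $\overline{L(k,\frac{k}{2})}^{\sigma_3}=V_{\Z\gamma}^{T_1}\otimes(W(k,\frac{k}{2})\oplus\widetilde{W(k,\frac{k}{2})})$, establishing \eqref{eq:3.9}.
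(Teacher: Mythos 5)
Your proposal is correct and follows essentially the same route as the paper: the paper likewise pins down the twisted type at $i=0$ by computing, via the $\Delta(\tfrac{1}{4}\sqrt{-1}(e-f),z)$-system, the scalar $(e(-1)^k{\bf 1}+f(-1)^k{\bf 1})_{k-1}{\bf 1}=\frac{(\sqrt{-1})^k}{2^{2k-1}}{\bf 1}$, concluding $T_1$ for $k\in 4\Z$ and $T_2$ for $k\in 4\Z+2$, and then propagates to all $i$ (including $i=\frac{k}{2}$) by fusing with $L(k,i)$ and invoking the $V_{\Z\gamma}^+$ fusion rules of \cite{Ab01}, exactly as in Lemma \ref{lem01}. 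Your sign bookkeeping $(-1)^{k/2}=(\sqrt{-1})^k$ for even $k$ and your parity analysis of $i-2j$ reproduce the paper's computation, so no gap remains.
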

\begin{proof} Notice that 
$$
\begin{array}{ll}
&Y_{\sigma_3}(e(-1)^k{\bf 1}+f(-1)^k{\bf 1}, z)\\&\\
=&
Y(z^{\frac{1}{4}\sqrt{-1}(e-f)(0)}{\rm exp}(\sum\limits_{n=1}^{\infty}\frac{\sqrt{-1}(e-f)(n)}{-4n}(-z)^{-n})(e(-1)^k{\bf 1}+f(-1)^k{\bf 1}),z).
\end{array}
$$
Then similar to the proof of Lemma \ref{lem01}, we have 
$$
(e(-1)^k{\bf 1}+f(-1)^k{\bf 1})_{k-1}{\bf 1}=\frac{1}{2^{2k-1}}(\sqrt{-1})^k{\bf 1}.
$$
This deduces  that 
$$
	\overline{L(k,0)}^{\sigma_3}=V_{\mathbb{Z}\gamma}^{T_1}\otimes W(k,0),  \ k\in 4\Z_+, 
$$
$$
\overline{L(k,0)}^{\sigma_3}=V_{\mathbb{Z}\gamma}^{T_2}\otimes W(k,0),  \ k\in 4\Z_++2.
$$
Then similar to the proof in Lemma \ref{lem01}, by considering $L(k,i)\boxtimes \overline{L(k,0)}^{\sigma_3}$ and using the  fusion rules of $V_{\Z\gamma}^+$ \cite{Ab01}, we get  the desired realtions.
\end{proof}
The following two lemmas follow from Lemma \ref{l3.14}-Lemma \ref{lem02}. 
\begin{lem}\label{l3.16} For $k\in 2\Z_+$,  and $0\leq i\leq k$, $i\neq \frac{k}{2}$, as $V_{\Z\gamma}^+\otimes K_0^+$-modules, we have  
$$
\begin{array}{ll}
\overline{L(k,i)}^{\sigma_1,+}=&\sum\limits_{\tiny\begin{split}0\leq j\leq k-1\\ j\in 2\Z \ \ \ \end{split}}V_{\mathbb{Z}\gamma+(\frac{k-2i}{4k}-\frac{j}{k})\gamma}\otimes M^{i,j}, \\&\\
\overline{L(k,i)}^{\sigma_1,-}=&\sum\limits_{\tiny\begin{split}0\leq j\leq k-1\\ j\in 2\Z+1 \ \ \ \end{split}}V_{\mathbb{Z}\gamma+(\frac{k-2i}{4k}-\frac{j}{k})\gamma}\otimes M^{i,j},
\end{array}
$$$$
\begin{array}{ll}
\overline{L(k,i)}^{\sigma_2,+}=&V_{\mathbb{Z}\gamma}^{T_2,+}\otimes W(k,i)^+\oplus V_{\mathbb{Z}\gamma}^{T_2,-}\otimes W(k,i)^-, \ i\in 2\Z+1, \\&\\
\overline{L(k,i)}^{\sigma_2,-}=& V_{\mathbb{Z}\gamma}^{T_2,+}\otimes W(k,i)^-\oplus V_{\mathbb{Z}\gamma}^{T_2,-}\otimes W(k,i)^+, \ i\in 2\Z+1, 
\end{array}
$$$$
\begin{array}{ll}
\overline{L(k,i)}^{\sigma_2,+}=&V_{\mathbb{Z}\gamma}^{T_1,+}\otimes W(k,i)^+\oplus V_{\mathbb{Z}\gamma}^{T_1,-}\otimes W(k,i)^-, \ i\in 2\Z, \\&\\
\overline{L(k,i)}^{\sigma_2,-}=&V_{\mathbb{Z}\gamma}^{T_1,+}\otimes W(k,i)^-\oplus V_{\mathbb{Z}\gamma}^{T_1,-}\otimes W(k,i)^+, \ i\in 2\Z, 
\end{array}
$$$$
\begin{array}{ll}
\overline{L(k,i)}^{\sigma_3,+}=&V_{\mathbb{Z}\gamma}^{T_1,+}\otimes W(k,i)^+\oplus V_{\mathbb{Z}\gamma}^{T_1,-}\otimes W(k,i)^-, \ k+2i\in 4\Z, 
\\&\\
\overline{L(k,i)}^{\sigma_3,-}=&V_{\mathbb{Z}\gamma}^{T_1,+}\otimes W(k,i)^-\oplus V_{\mathbb{Z}\gamma}^{T_1,-}\otimes W(k,i)^+, \ k+2i\in 4\Z, \\
&\\
\overline{L(k,i)}^{\sigma_3,+}=&V_{\mathbb{Z}\gamma}^{T_2,+}\otimes W(k,i)^+\oplus V_{\mathbb{Z}\gamma}^{T_2,-}\otimes W(k,i)^-, \ k+2i\in 4\Z+2, \\&\\
\overline{L(k,i)}^{\sigma_3,-}=&V_{\mathbb{Z}\gamma}^{T_2,+}\otimes W(k,i)^-\oplus V_{\mathbb{Z}\gamma}^{T_2,-}\otimes W(k,i)^+, \ k+2i\in 4\Z+2.
\end{array}
$$	
\end{lem}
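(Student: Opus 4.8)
The plan is to refine the global decompositions already obtained in Lemma~\ref{l3.14}, Lemma~\ref{lem01}, Lemma~\ref{l3.1} and Lemma~\ref{lem02}: each of these expresses the full twisted module $\overline{L(k,i)}^{\sigma_r}$ as a module over $V_{\Z\gamma}\otimes K_0$ (for $r=1$) or over $V_{\Z\gamma}^{T_a}\otimes K_0$ (for $r=2,3$). By Theorem~\ref{main1}, restricted to $L(k,0)^K$ the module $\overline{L(k,i)}^{\sigma_r}$ is the direct sum of exactly two inequivalent irreducibles $\overline{L(k,i)}^{\sigma_r,+}$ and $\overline{L(k,i)}^{\sigma_r,-}$, and these are cut out by the eigenvalues of the involution realizing the inclusion $V_{\Z\gamma}^+\otimes K_0^+\subseteq L(k,0)^K$ inside $L(k,0)^{\sigma_r}$. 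So the whole task is to start from the established decomposition, pass from $V_{\Z\gamma}\otimes K_0$ (resp.\ $V_{\Z\gamma}^{T_a}\otimes K_0$) down to $V_{\Z\gamma}^+\otimes K_0^+$, and then sort the resulting summands into the $+$ and $-$ eigenspaces of that involution.

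For $r=2,3$ I would argue as follows. By Lemma~\ref{lem01} and Lemma~\ref{lem02} the module is a tensor product $V_{\Z\gamma}^{T_a}\otimes W(k,i)$, where $a\in\{1,2\}$ is read off from the parity of $i$ (for $\sigma_2$) or from $k+2i \bmod 4$ (for $\sigma_3$). Decomposing each tensor factor into $\sigma_2$-eigenspaces gives $V_{\Z\gamma}^{T_a}=V_{\Z\gamma}^{T_a,+}\oplus V_{\Z\gamma}^{T_a,-}$ and $W(k,i)=W(k,i)^+\oplus W(k,i)^-$, so that
$$
V_{\Z\gamma}^{T_a}\otimes W(k,i)=\bigoplus_{s,t\in\{+,-\}}V_{\Z\gamma}^{T_a,s}\otimes W(k,i)^t .
$$
Since $L(k,0)^K$ preserves the diagonal $\sigma_2$-grading, the even constituent $\overline{L(k,i)}^{\sigma_r,+}$ equals $V_{\Z\gamma}^{T_a,+}\otimes W(k,i)^+\oplus V_{\Z\gamma}^{T_a,-}\otimes W(k,i)^-$ and the odd constituent $\overline{L(k,i)}^{\sigma_r,-}$ is the sum of the two cross terms; to fix which constituent carries the label $+$ I would locate the generating vector prescribed in \cite{JWa} and read off its $\sigma_2$-eigenvalue. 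This yields the corresponding formulas for $r=2,3$.

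For $r=1$ I would instead use Lemma~\ref{l3.1}, which presents $\overline{L(k,i)}^{\sigma_1}=\oplus_{j=0}^{k-1}V_{\Z\gamma+(\frac{k-2i}{4k}-\frac{j}{k})\gamma}\otimes M^{i,j}$ as a $V_{\Z\gamma}\otimes K_0$-module, hence a fortiori as a $V_{\Z\gamma}^+\otimes K_0^+$-module. Here the grading operator separating $L(k,i)^{\sigma_1,+}$ from $L(k,i)^{\sigma_1,-}$ acts as the scalar $(-1)^j$ on the entire $j$-th summand $V_{\Z\gamma+(\frac{k-2i}{4k}-\frac{j}{k})\gamma}\otimes M^{i,j}$, so the sorting is purely by the parity of $j$ and does not interact with any internal $V_{\Z\gamma}^+\otimes K_0^+$-splitting of the individual summands. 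Collecting the even and the odd values of $j$ then gives the two displayed formulas for $r=1$; the hypothesis $i\neq\frac{k}{2}$ guarantees that we are away from the degenerate labels where the two $\sigma_1$-constituents would coincide.

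The main obstacle is the final step in each case, namely pinning down the $+/-$ labeling so that it matches the definitions in \cite{JWa} rather than merely splitting the module into two halves. This requires tracking the distinguished generating vector of $\overline{L(k,i)}^{\sigma_r,+}$ through the $\Delta(\cdot,z)$-twist into the lattice--parafermion coordinates and computing its eigenvalue under the splitting involution. The normalizations already established in Lemma~\ref{lem01} and Lemma~\ref{lem02}, in particular the explicit evaluation of $(e(-1)^k{\bf 1}+f(-1)^k{\bf 1})_{k-1}{\bf 1}$ that fixed the twisted type $T_a$, are exactly what make this eigenvalue computation determinate, so I expect the labeling to drop out without further work.
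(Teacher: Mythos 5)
Your proposal is correct and follows essentially the same route as the paper, which states this lemma as a direct consequence of Lemmas \ref{l3.14}, \ref{lem01}, \ref{l3.1} and \ref{lem02}: one restricts those decompositions to $V_{\Z\gamma}^+\otimes K_0^+$ and sorts the summands by the eigenvalue of the lifted residual involution (parity of $j$ for $\sigma_1$; diagonal versus cross terms $V_{\Z\gamma}^{T_a,\pm}\otimes W(k,i)^{\pm}$ for $\sigma_2,\sigma_3$), with the $+/-$ labels fixed by the distinguished generating vectors of \cite{JWa}, exactly as you outline. One peripheral remark: at $i=\frac{k}{2}$ the two constituents do not ``coincide'' but rather become reducible and split further into the four modules $\overline{L(k,\frac{k}{2})}^{\sigma_r,j}$, which is the actual reason that case is excluded; this does not affect your argument.
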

\begin{lem}\label{l3.17}
Let $k\in 2\Z_+$, then 	as $V_{\Z\gamma}^+\otimes K_0^+$-modules, we have 
	\begin{equation}\label{e3.41}
	\overline{L(k,\frac{k}{2})}^{\sigma_3,1}=V^{T_1,+}\otimes W(k,\frac{k}{2})^+\oplus V^{T_1,-}\otimes W(k,\frac{k}{2})^-,
	\end{equation}
	\begin{equation}\label{e3.42}
	\overline{L(k,\frac{k}{2})}^{\sigma_3,2}=V^{T_1,+}\otimes \widetilde{W(k,\frac{k}{2})}^-\oplus V^{T_1,-}\otimes \widetilde{W(k,\frac{k}{2})}^+,
	\end{equation}
	\begin{equation}\label{e3.44}
	\overline{L(k,\frac{k}{2})}^{\sigma_3,3}=V^{T_1,+}\otimes \widetilde{W(k,\frac{k}{2})}^+\oplus V^{T_1,-}\otimes \widetilde{W(k,\frac{k}{2})}^-,
	\end{equation}
	\begin{equation}\label{e3.43}
	\overline{L(k,\frac{k}{2})}^{\sigma_3,4}=V^{T_1,-}\otimes W(k,\frac{k}{2})^+\oplus V^{T_1,+}\otimes W(k,\frac{k}{2})^-.
	\end{equation}	
	For $k\in 4\Z$, 
	\begin{equation}\label{e3.58}
		\overline{L(k,\frac{k}{2})}^{\sigma_2,1}\cong\overline{L(k,\frac{k}{2})}^{\sigma_3,1}, \quad 
			\overline{L(k,\frac{k}{2})}^{\sigma_2,2}\cong\overline{L(k,\frac{k}{2})}^{\sigma_3,2},
		\end{equation}
\begin{equation}\label{e3.59}		
			\overline{L(k,\frac{k}{2})}^{\sigma_2,3}\cong\overline{L(k,\frac{k}{2})}^{\sigma_3,4},  \quad 
				\overline{L(k,\frac{k}{2})}^{\sigma_2,4}\cong\overline{L(k,\frac{k}{2})}^{\sigma_3,3}.
	\end{equation}
For $k\in 4\Z+2$, 	
		\begin{equation}\label{e3.60}
	\overline{L(k,\frac{k}{2})}^{\sigma_2,1}=V^{T_2,+}\otimes W(k,\frac{k}{2})^+\oplus V^{T_2,-}\otimes W(k,\frac{k}{2})^-,
	\end{equation}
	\begin{equation}\label{e3.61}
	\overline{L(k,\frac{k}{2})}^{\sigma_2,2}=V^{T_2,+}\otimes \widetilde{W(k,\frac{k}{2})}^-\oplus V^{T_2,-}\otimes \widetilde{W(k,\frac{k}{2})}^+,
	\end{equation}
	\begin{equation}\label{e3.62}
	\overline{L(k,\frac{k}{2})}^{\sigma_2,3}=V^{T_2,-}\otimes W(k,\frac{k}{2})^+\oplus V^{T_2,+}\otimes W(k,\frac{k}{2})^-,
	\end{equation}	
	\begin{equation}\label{e3.63}
	\overline{L(k,\frac{k}{2})}^{\sigma_2,4}=V^{T_2,+}\otimes \widetilde{W(k,\frac{k}{2})}^+\oplus V^{T_2,-}\otimes \widetilde{W(k,\frac{k}{2})}^-.
	\end{equation}
\end{lem}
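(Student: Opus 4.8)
The plan is to locate each irreducible $L(k,0)^K$-module $\overline{L(k,\frac{k}{2})}^{\sigma_r,j}$ inside the ambient twisted module $\overline{L(k,\frac{k}{2})}^{\sigma_r}$, whose $V_{\Z\gamma}^+\otimes K_0^+$-decomposition has already been determined in Lemmas \ref{lem01} and \ref{lem02}. Writing $W=W(k,\frac{k}{2})$ and $\widetilde{W}=\widetilde{W(k,\frac{k}{2})}$ for brevity, (\ref{eq:3.9}) gives $\overline{L(k,\frac{k}{2})}^{\sigma_3}=V_{\Z\gamma}^{T_1}\otimes(W\oplus\widetilde{W})$ for every even $k$, while (\ref{eq:3.12}) and (\ref{eq:3.13}) give $\overline{L(k,\frac{k}{2})}^{\sigma_2}=V_{\Z\gamma}^{T_1}\otimes(W\oplus\widetilde{W})$ for $k\in4\Z$ and $\overline{L(k,\frac{k}{2})}^{\sigma_2}=V_{\Z\gamma}^{T_2}\otimes(W\oplus\widetilde{W})$ for $k\in4\Z+2$. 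First I would refine these decompositions: as $V_{\Z\gamma}^+$-modules $V_{\Z\gamma}^{T_a}=V^{T_a,+}\oplus V^{T_a,-}$, and as $K_0^+$-modules $W=W^+\oplus W^-$ and $\widetilde{W}=\widetilde{W}^+\oplus\widetilde{W}^-$, so that each ambient module is exhibited as a direct sum of eight irreducible $V_{\Z\gamma}^+\otimes K_0^+$-modules.

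Next I would group these eight constituents into the four irreducible $L(k,0)^K$-submodules. The mechanism is that $L(k,0)^K$ properly contains $V_{\Z\gamma}^+\otimes K_0^+$: by Lemma \ref{l3.14} its extra constituents are $V_{\Z\gamma}^-\otimes K_0^-$ together with charged summands assembled from the $V_{\Z\gamma}^+$-modules $V_{\Z\gamma\pm\frac{i}{k}\gamma}$ and the $K_0^+$-modules $M^{0,i}$, and their action on the eight pieces factors, through the tensor-product structure of intertwining operators, into the fusion rules of $V_{\Z\gamma}^+$ from \cite{Ab01} and of $K_0^+$ from \cite{JW2}. In particular the sign-reversing rules $V_{\Z\gamma}^-\boxtimes V^{T_a,\pm}=V^{T_a,\mp}$ and $K_0^-\boxtimes W^\pm=W^\mp$ (and likewise for $\widetilde{W}$) force $V^{T_a,+}\otimes W^+$ and $V^{T_a,-}\otimes W^-$ to lie in one common irreducible submodule, and similarly for the three remaining pairs; since Theorem \ref{main1} already guarantees exactly four irreducible constituents, this produces the four submodules appearing in (\ref{e3.41})--(\ref{e3.43}) up to labeling. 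To pin down the indices $j=1,2,3,4$ I would use (\ref{e3}), namely $L(k,0)^{\sigma_r,j}\boxtimes\overline{L(k,\frac{k}{2})}^{\sigma_r,1}=\overline{L(k,\frac{k}{2})}^{\sigma_r,j}$, after rewriting each $L(k,0)^{\sigma_r,j}$ via Lemma \ref{l3.8} as one of the $L(k,0)^l$ whose $V_{\Z\gamma}^+\otimes K_0^+$-decomposition is listed in Lemma \ref{l3.14}, and then evaluating the fusion product one tensor factor at a time.

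For $k\in4\Z$ I would run the identical computation for $\overline{L(k,\frac{k}{2})}^{\sigma_2}$, which by (\ref{eq:3.12}) also rests on $V_{\Z\gamma}^{T_1}$; comparing the four resulting constituents with (\ref{e3.41})--(\ref{e3.43}) reads off the isomorphisms (\ref{e3.58})--(\ref{e3.59}), with the index swap $3\leftrightarrow4$ reflecting the different $\sigma_2$-labeling coming from Lemma \ref{l3.8}. For $k\in4\Z+2$ the analysis of $\overline{L(k,\frac{k}{2})}^{\sigma_2}$ is word-for-word the $\sigma_3$ argument but with $V_{\Z\gamma}^{T_1}$ replaced everywhere by $V_{\Z\gamma}^{T_2}$ (by (\ref{eq:3.13})), yielding (\ref{e3.60})--(\ref{e3.63}). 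I expect the main obstacle to be the sign and index bookkeeping in the grouping step: one must track the exact eigenvalue of the relevant involution lift on each of the eight constituents and apply the precise $V_{\Z\gamma}^+$- and $K_0^+$-fusion rules, because this is what separates the $W$-type from the $\widetilde{W}$-type constituents and fixes which $\pm$-combination carries each index $j$.
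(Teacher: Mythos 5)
Your proposal is correct and takes essentially the same route as the paper, which offers no detailed argument at all but simply asserts that this lemma ``follows from Lemma \ref{l3.14}--Lemma \ref{lem02}.'' Your elaboration --- refining the ambient decompositions (\ref{eq:3.9}), (\ref{eq:3.12}), (\ref{eq:3.13}) into eight irreducible $V_{\Z\gamma}^+\otimes K_0^+$-constituents, regrouping them in pairs via the sign-reversing simple-current action of $V_{\Z\gamma}^-\otimes K_0^-\subseteq L(k,0)^K$ (using the fusion rules of \cite{Ab01} and \cite{JW2}), and then fixing the labels $j=1,2,3,4$ through (\ref{e3}) together with Lemmas \ref{l3.8} and \ref{l3.14} --- is exactly the bookkeeping the paper leaves implicit.
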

\begin{prop}\label{t3.1}
	All the irreducible modules of $L_{\widehat{\mathfrak{sl}_2}}(k,0)^{K}$ are self-dual.
	\end{prop}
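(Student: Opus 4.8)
The plan is to prove $W\cong W'$ for every irreducible module $W$ on the list of Theorem \ref{main1}, by combining three standard features of contragredience with the self-duality of certain ``building blocks''. Recall that passing to the contragredient preserves the graded dimension (hence the lowest conformal weight), commutes with restriction to a subalgebra sharing the conformal vector $\omega$, and satisfies $(A\otimes B)'\cong A'\otimes B'$ for tensor product VOAs. The building blocks here are the irreducible modules of $V_{\Z\gamma}^+$, of $K_0^+$, and the twisted-sector modules $V_{\Z\gamma}^{T_a}$, all of which are already known to be self-dual (from \cite{DN99}, \cite{Ab01} on the lattice side and \cite{ALY1}, \cite{JW2} on the parafermion side). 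The first step is to read off, from Lemmas \ref{l3.14}--\ref{l3.17}, that every irreducible $L(k,0)^K$-module decomposes as a $V_{\Z\gamma}^+\otimes K_0^+$-module into a direct sum of pieces $A\otimes B$, each of the above self-dual type.

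For $k$ even, where $V_{\Z\gamma}^+\otimes K_0^+$ is a conformal subalgebra of $L(k,0)^K$, the main argument then runs as follows. Since each constituent satisfies $(A\otimes B)'\cong A\otimes B$ and contragredience commutes with restriction to $V_{\Z\gamma}^+\otimes K_0^+$, the module $W'$ restricts to the \emph{same} $V_{\Z\gamma}^+\otimes K_0^+$-module as $W$. Because the explicit branchings recorded in Lemmas \ref{l3.14}--\ref{l3.17} are pairwise distinct across the irreducibles of Theorem \ref{main1} (they are, after all, what distinguishes these modules), two irreducible $L(k,0)^K$-modules with the same restriction must coincide, so $W\cong W'$.

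For the untwisted modules, and in particular for the whole odd-$k$ case where $V_{\Z\gamma}^+\otimes K_0^+$ is no longer a subalgebra, I would argue directly with invariant bilinear forms. Here $L(k,i)$ is self-dual as an $L(k,0)$-module because every finite-dimensional $\mathfrak{sl}_2(\C)$-module is self-dual, so its lowest weight space $L(\tfrac{i\alpha}{2})$ is self-dual and hence $L(k,i)'\cong L(k,i)$. Fixing a nondegenerate invariant form on $L(k,i)$, the point is that it is invariant under the (projective) action of $K$; since $K$ is an elementary abelian $2$-group, every character of $K$ equals its own inverse, so the form pairs each $K$-isotypic component with itself nondegenerately. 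As each untwisted irreducible $L(k,0)^K$-module $L(k,i)^{+}$ or $L(k,i)^{l}$ is exactly such an isotypic component, it inherits a nondegenerate invariant form and is self-dual. The twisted modules $\overline{L(k,i)}^{\sigma_r,\pm}$ are handled the same way, once one knows that $\overline{L(k,i)}^{\sigma_r}$ is self-dual as a $\sigma_r$-twisted $L(k,0)$-module (its contragredient is again $\sigma_r$-twisted, with the same lowest weight, so they agree by the classification), followed by splitting into $\pm$ eigenspaces under the self-inverse characters.

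The main obstacle I expect is precisely the verification that these invariant forms are genuinely $K$-invariant, that is, that the character by which $K$ acts on the one-dimensional space of invariant forms is trivial: a nontrivial such character would pair a module with a \emph{different} isotypic component and break self-duality. The even-$k$ route sidesteps this by reducing to the already-established self-duality of the $V_{\Z\gamma}^+$- and $K_0^+$-modules, so there the delicate point is instead the bookkeeping claim that the decompositions of Lemmas \ref{l3.14}--\ref{l3.17} separate the irreducibles and that each tensor factor is self-dual in the correct sector ($T_1$ versus $T_2$, and $+$ versus $-$); keeping this matching consistent throughout is where the care is required.
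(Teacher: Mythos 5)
There are genuine gaps in both prongs of your argument. For the even-$k$ route, the key uniqueness claim --- that the branchings in Lemmas \ref{l3.14}--\ref{l3.17} are pairwise distinct across the irreducibles of Theorem \ref{main1} --- is simply false. The restriction to $V_{\Z\gamma}^+\otimes K_0^+$ cannot see which involution a module is twisted by: by Lemma \ref{l3.16}, for $k\in 4\Z$ and $i\in 2\Z$, $i\neq\frac{k}{2}$, both $\overline{L(k,i)}^{\sigma_2,+}$ and $\overline{L(k,i)}^{\sigma_3,+}$ decompose as $V_{\mathbb{Z}\gamma}^{T_1,+}\otimes W(k,i)^{+}\oplus V_{\mathbb{Z}\gamma}^{T_1,-}\otimes W(k,i)^{-}$, and Lemma \ref{l3.17}, (\ref{e3.58})--(\ref{e3.59}), explicitly records $\overline{L(k,\frac{k}{2})}^{\sigma_2,j}\cong\overline{L(k,\frac{k}{2})}^{\sigma_3,j'}$ as $V_{\Z\gamma}^+\otimes K_0^+$-modules. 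So ``same restriction, hence same module'' collapses: knowing that $W'$ has the same branching as $W$ leaves open, e.g., whether $(\overline{L(k,i)}^{\sigma_2,+})'$ is $\overline{L(k,i)}^{\sigma_2,+}$ or $\overline{L(k,i)}^{\sigma_3,+}$. What closes this is the extra input that contragredience preserves the twist sector, which the paper imports from the $\Z_2$-orbifold contragredient computations of \cite{JW2} over $L(k,0)^{\sigma_r,+}$ rather than from branching data.

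Your second prong fails at exactly the point you flagged, and the failure is real rather than hypothetical: the character by which $K$ acts on the line of invariant forms is \emph{not} trivial in general. By Theorem 3.25 of \cite{JW2} (quoted in the paper's proof), $(L(k,i)^{\pm})'\cong L(k,i)^{\mp}$ as $L(k,0)^{\sigma_1}$-modules for $i\in 2\Z+1$: the invariant form on $L(k,i)$ pairs the two eigencomponents with \emph{each other}, not each with itself. Self-duality over $L(k,0)^K$ is rescued only by the isomorphism $L(k,i)^{+}\cong L(k,i)^{-}$ as $L(k,0)^K$-modules, a fact your argument never invokes. The same defect appears in your twisted-sector step: the claim that $\overline{L(k,i)}^{\sigma_r}$ is self-dual as a $\sigma_r$-twisted module ``because its contragredient has the same lowest weight'' is wrong --- in fact $(\overline{L(k,i)}^{\sigma_r,\pm})'\cong \overline{L(k,k-i)}^{\sigma_r,\pm}$, and $\overline{L(k,i)}^{\sigma_r}$, $\overline{L(k,k-i)}^{\sigma_r}$ are non-isomorphic twisted modules (distinct $h(0)$-charge) sharing the same lowest conformal weight, so the lowest weight does not determine the module. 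The paper's proof closes both holes with the same two inputs: the \cite{JW2} contragredient formulas over the intermediate $\Z_2$-orbifolds, and the $L(k,0)^K$-module identifications $L(k,i)^{+}\cong L(k,i)^{-}$ ($i$ odd) and $\overline{L(k,i)}^{\sigma_r,\pm}\cong\overline{L(k,k-i)}^{\sigma_r,\pm}$ from \cite{JWa}; your tensor-decomposition idea then survives only in the restricted role it plays there, namely distinguishing the four modules $L(k,i)^{j}$ ($i$ even) and $\overline{L(k,\frac{k}{2})}^{\sigma_1,j}$ via the self-dual constituents $V_{\Z\gamma}^{\pm}\otimes(M^{i,\frac{i}{2}})^{\pm}$ and $V_{\Z\gamma}^{\pm}\otimes(M^{\frac{k}{2},0})^{\pm}$, where those constituents genuinely do separate the candidates.
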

\begin{proof}
	By Theorem 3.25 in \cite{JW2}, as irreducible modules of $L(k,0)^{\sigma_1}$,  $(L(k,i)^{\pm})'$ is isomorphic to $L(k,i)^{\mp}$, for $i\in 2\Z+1$. By Theorem \ref{main1},  as $L(k,0)^K$-modules, $L(k,i)^+$ and $L(k,i)^-$ are irreducible and isomorphic to each other, for $i\in 2\Z+1$. This means that in this case, $L(k,i)^+$ is self-dual. If $i\in 2\Z$, notice that \cite{JW2}
	$$
	V_{\Z\gamma}\otimes M^{i,\frac{i}{2}}\subseteq L(k,i)$$ and $V_{\Z\gamma}^+, \ V_{\Z\gamma}^-, 
	\ (M^{i,\frac{i}{2}})^+, \  (M^{i,\frac{i}{2}})^-
	$
	are self-dual. This deduces that $L(k,i)^j$, $j=1,2,3,4$ are self-dual.  We know from \cite{JW2} that 
		as $L(k,0)^{\sigma_{r},+}$-modules,
	$$
	(\overline{L(k,i)}^{\sigma_r,\pm})'\cong \overline{L(k,k-i)}^{\sigma_r,\pm}.
	$$
  It follows from  \cite{JWa}  that as $L(k,0)^K$-mouldes,
	$$
	\overline{L(k,i)}^{\sigma_r,\pm}\cong \overline{L(k,k-i)}^{\sigma_r,\pm}.
	$$
	Then we deduce that for $i\neq \frac{k}{2}$,  	$\overline{L(k,i)}^{\sigma_r,\pm}$, $r=1,2,3$ are self-dual. If $k\in 2\Z$,  $i=\frac{k}{2}$, by the fact that
	$$
	V_{\Z\gamma}\otimes M^{\frac{k}{2},0} \subseteq \overline{L(k,\frac{k}{2})}^{\sigma_1}
	$$
	and $V_{\Z\gamma}^{\pm}$, $(M^{\frac{k}{2},0})^{\pm}$ are self-dual, we deduce that $\overline{L(k,\frac{k}{2})}^{\sigma_1,j}$, $j=1,2,3,4$ are self-dual. This means that $\overline{L(k,\frac{k}{2})}^{\sigma_r,j}$, $r=2,3$, $j=1,2,3,4$ are self-dual also.
\end{proof}

The following proposition which gives quantum dimensions of all irreducible modules of $L(k,0)^K$  follows from \cite{JW2} and Theorem \ref{thm:orbifold3'}.
\begin{prop}\label{prop3.1} For $0\leq i\leq k$, we have
	$$
	{\rm dim}_qL(k,i)^+=2\dfrac{\sin\frac{(i+1)\pi}{k+2}}{\sin \frac{\pi}{k+2}}, \ i\in 2\Z+1, 	
	$$
	$$
	{\rm dim}_qL(k,i)^j=\dfrac{\sin\frac{(i+1)\pi}{k+2}}{\sin \frac{\pi}{k+2}}, \ i\in 2\Z, 	\  j=1,2,3,4,
	$$	
	$$
	{\rm dim}_q\overline{L(k,i)}^{\sigma_r,+}={\rm dim}_q\overline{L(k,i)}^{\sigma_r,-}=2\dfrac{\sin\frac{(i+1)\pi}{k+2}}{\sin \frac{\pi}{k+2}}, \ i\neq\frac{k}{2}, \ r=1,2,3,
	$$	
	$$
	{\rm dim}_qL(k, \frac{k}{2})^{\sigma_r,j}=\dfrac{1}{\sin \frac{\pi}{k+2}}, \ r=1,2,3, 	\  j=1,2,3,4. 
	$$	
\end{prop}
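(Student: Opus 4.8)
The plan is to deduce every quantum dimension from two inputs: the classical value $\mbox{qdim}_{L(k,0)}L(k,i)=\frac{\sin\frac{(i+1)\pi}{k+2}}{\sin\frac{\pi}{k+2}}$ (the ratio $S_{0i}/S_{00}$ for $\widehat{\mathfrak{sl}_2}$ at level $k$), together with the quantum dimensions of the irreducible modules of the $\mathbb{Z}/2\mathbb{Z}$-orbifolds $L(k,0)^{\sigma_r}$ computed in \cite{JW2}, in particular $\mbox{qdim}_{L(k,0)^{\sigma_r}}\overline{L(k,i)}^{\sigma_r}=2\frac{\sin\frac{(i+1)\pi}{k+2}}{\sin\frac{\pi}{k+2}}$ for $i\neq\frac{k}{2}$. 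The two transfer principles I will use repeatedly are: (i) quantum dimension is multiplicative under fusion (Section 2), so fusing with any of the simple currents $L(k,0)^{j}$ or $L(k,0)^{\sigma_r,j}$ (each of quantum dimension $1$) leaves the quantum dimension unchanged; and (ii) if $U\subseteq V$ is a conformal vertex operator subalgebra and $M$ is a $V$-module, then $\mbox{qdim}_{U}(M)=\mbox{qdim}_{V}(M)\cdot\mbox{qdim}_{U}(V)$, which is immediate from the definition since $Z_{M}(\tau)$ depends only on the shared $(L(0),c)$. Applying (ii) to the decompositions $L(k,0)=\bigoplus_{j=1}^{4}L(k,0)^{j}$ and $L(k,0)^{\sigma_r}=L(k,0)^{\sigma_r,1}\oplus L(k,0)^{\sigma_r,2}$ into simple currents gives the two normalizations $\mbox{qdim}_{L(k,0)^{K}}L(k,0)=4$ and $\mbox{qdim}_{L(k,0)^{K}}L(k,0)^{\sigma_r}=2$.

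For the untwisted modules I will restrict $L(k,i)$ to $L(k,0)^{K}$, using the explicit decompositions from \cite{JWa}. When $i\in2\mathbb{Z}$, one has $L(k,i)|_{L(k,0)^{K}}=\bigoplus_{j=1}^{4}L(k,i)^{j}$, and Lemma \ref{l3.7} shows these four summands are permuted by fusion with the $L(k,0)^{j}$, hence share one common value by (i); since their total quantum dimension equals $\mbox{qdim}_{L(k,0)^{K}}L(k,i)=4\,\mbox{qdim}_{L(k,0)}L(k,i)$ by (ii), each equals $\frac{\sin\frac{(i+1)\pi}{k+2}}{\sin\frac{\pi}{k+2}}$. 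When $i\in2\mathbb{Z}+1$, \cite{JWa} and Proposition \ref{t3.1} exhibit $L(k,i)|_{L(k,0)^{K}}$ as the direct sum of two copies of the single irreducible $L(k,i)^{+}$; equating $2\,\mbox{qdim}_{L(k,0)^{K}}L(k,i)^{+}$ with $4\,\mbox{qdim}_{L(k,0)}L(k,i)$ yields the factor $2$ in the first formula.

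For the twisted modules I will instead use the inclusion $L(k,0)^{K}\subseteq L(k,0)^{\sigma_r}$. For $i\neq\frac{k}{2}$ the module $\overline{L(k,i)}^{\sigma_r}$ is an irreducible $L(k,0)^{\sigma_r}$-module, and it restricts to $L(k,0)^{K}$ as $\overline{L(k,i)}^{\sigma_r,+}\oplus\overline{L(k,i)}^{\sigma_r,-}$; by \eqref{e2} and (i) the two summands have a common quantum dimension $d$. Then (ii) gives $2d=\mbox{qdim}_{L(k,0)^{K}}\bigl(\overline{L(k,i)}^{\sigma_r}|_{L(k,0)^{K}}\bigr)=2\,\mbox{qdim}_{L(k,0)^{\sigma_r}}\overline{L(k,i)}^{\sigma_r}=4\frac{\sin\frac{(i+1)\pi}{k+2}}{\sin\frac{\pi}{k+2}}$, so $d=2\frac{\sin\frac{(i+1)\pi}{k+2}}{\sin\frac{\pi}{k+2}}$, which is the third formula (with the $+$ and $-$ values automatically equal). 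For the remaining case $i=\frac{k}{2}$ (so $k$ is even), \cite{JW2} shows $\overline{L(k,\frac{k}{2})}^{\sigma_r}$ splits over $L(k,0)^{\sigma_r}$ into $\overline{L(k,\frac{k}{2})}^{\sigma_r,+}\oplus\overline{L(k,\frac{k}{2})}^{\sigma_r,-}$, each of quantum dimension $\frac{1}{\sin\frac{\pi}{k+2}}$; restricting to $L(k,0)^{K}$ and splitting once more via \eqref{e3} into $\overline{L(k,\frac{k}{2})}^{\sigma_r,1}\oplus\overline{L(k,\frac{k}{2})}^{\sigma_r,2}$ (resp.\ $3,4$), the two equal pieces each inherit quantum dimension $\frac{1}{\sin\frac{\pi}{k+2}}$, the last formula; note this equals half of $2\frac{\sin\frac{(k/2+1)\pi}{k+2}}{\sin\frac{\pi}{k+2}}$ since $\frac{(k/2+1)\pi}{k+2}=\frac{\pi}{2}$. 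The three values $r=1,2,3$ follow identically, as $\{h^{(r)},e^{(r)},f^{(r)}\}$ realizes $\sigma_r$ exactly as the standard triple realizes $\sigma_1$; alternatively all twisted quantum dimensions can be read off from the $V_{\Z\gamma}^{+}\otimes K_{0}^{+}$-decompositions in Lemmas \ref{l3.16}--\ref{l3.17} together with the lattice- and parafermion-orbifold quantum dimensions from \cite{Ab01}, \cite{JW2} and Theorem \ref{thm:orbifold3'}.

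The main obstacle is keeping the restriction multiplicities exactly right, since it is precisely these combinatorial factors that separate the four formulas: the factor $2$ for odd $i$ comes from the two $\sigma$-eigenspaces of $L(k,i)$ collapsing to one isomorphism class $L(k,i)^{+}$ over $L(k,0)^{K}$, while the factor $\frac12$ at $i=\frac{k}{2}$ comes from the twisted module splitting one extra time. Justifying these requires careful use of Theorem \ref{main1}, Proposition \ref{t3.1}, Lemma \ref{l3.7} and the simple-current relations \eqref{e1}--\eqref{e3}, and it rests on the rationality and $C_{2}$-cofiniteness of $L(k,0)^{K}$ and of the $L(k,0)^{\sigma_r}$ (valid since $K$ is solvable), so that (i) and (ii) apply and all quantum dimensions are finite. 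As a consistency check one verifies that the resulting values give global dimension $\sum_{M}(\mbox{qdim}\,M)^{2}=16\sum_{i=0}^{k}\bigl(\frac{\sin\frac{(i+1)\pi}{k+2}}{\sin\frac{\pi}{k+2}}\bigr)^{2}=|K|^{2}\,\mbox{(global dim }L(k,0))$, as it must.
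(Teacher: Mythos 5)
Your proposal is correct and takes essentially the same route as the paper: the paper's entire proof of Proposition \ref{prop3.1} is a one-line citation of the $\mathbb{Z}_2$-orbifold quantum dimensions from \cite{JW2} together with Theorem \ref{thm:orbifold3'}, and your restriction--multiplicativity bookkeeping (the index factors $\mathrm{qdim}_{L(k,0)^{K}}L(k,0)=4$ and $\mathrm{qdim}_{L(k,0)^{K}}L(k,0)^{\sigma_r}=2$, the collapse $L(k,i)\cong 2\,L(k,i)^{+}$ for odd $i$ from Theorem \ref{main1}, and the extra splitting at $i=\frac{k}{2}$ via \eqref{e3}) is precisely the computation that citation compresses. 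Your values also pass the global-dimension check $\sum_{M}(\mathrm{qdim}\,M)^2=16\sum_{i=0}^{k}\bigl(\sin\frac{(i+1)\pi}{k+2}/\sin\frac{\pi}{k+2}\bigr)^2$ for both parities of $k$, so the argument is complete.
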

\vskip 0.3cm
For $0\leqslant i,j,l\leqslant k$ such that $i+j+l\in 2\mathbb{Z}$, following \cite{JW2}, we define
\[\mbox{sign}(i,j,l)^{+}=\begin{cases}+,\ & \mbox{if} \ i+j-l\in 4{\mathbb{Z}},\cr
-,\ &\mbox{if} \  i+j-l\notin 4{\mathbb{Z}},\end{cases}\]

and

\[\mbox{sign}(i,j,l)^{-}=\begin{cases}-,\ & \mbox{if} \ i+j-l\in 4{\mathbb{Z}},\cr
+,\ &\mbox{if} \  i+j-l\notin 4{\mathbb{Z}}.\end{cases}\]

We have the following fusion rules for the ${\Z}_{2}$-orbifold affine vertex operator algebra $L(k,0)^{\la\sigma_2\ra}$ from \cite{JW2}.

\begin{lem}\label{lem3.7} The fusion rules for the ${\mathbb{Z}}_{2}$-orbifold affine vertex operator algebra $L(k,0)^{\la \sigma_r \ra}$ are as follows:
\begin{eqnarray}\label{e3.21}
L(k,i)^{\sigma_2,+}\boxtimes L(k,j)^{\sigma_2,\pm}=\sum\limits_{\tiny{\begin{split}|i-j|\leqslant l\leqslant i+j \\  i+j+l\in 2\mathbb{Z} \ \ \ \\ i+j+l\leqslant 2k\ \ \ \end{split}}} L(k,l)^{\sigma_2, \mbox{sign}(i,j,l)^{\pm}},
\end{eqnarray}

\begin{eqnarray}\label{fusion.untwist2}
L(k,i)^{\sigma_2,-}\boxtimes L(k,j)^{\sigma_2,\pm}=\sum\limits_{\tiny{\begin{split}|i-j|\leqslant l\leqslant i+j \\  i+j+l\in 2\mathbb{Z} \ \ \ \\ i+j+l\leqslant 2k\ \ \ \end{split}}}  L(k,l)^{\sigma_2, \mbox{sign}(i,j,l)^{\mp}},
\end{eqnarray}

\begin{eqnarray}\label{fusion.twist1}
L(k,i)^{\sigma_2,+}\boxtimes \overline{L(k,j)}^{\sigma_2,\pm}=\sum\limits_{\tiny{\begin{split}|i-j|\leqslant l\leqslant i+j \\  i+j+l\in 2\mathbb{Z} \ \ \ \\ i+j+l\leqslant 2k\ \ \ \end{split}}} \overline{L(k,l)}^{\sigma_2, \mbox{sign}(i,j,l)^{\pm}},
\end{eqnarray}

\begin{eqnarray}\label{fusion.twist2}
L(k,i)^{\sigma_2,-}\boxtimes \overline{L(k,j)}^{\sigma_2,\pm}=\sum\limits_{\tiny{\begin{split}|i-j|\leqslant l\leqslant i+j \\  i+j+l\in 2\mathbb{Z} \ \ \ \\ i+j+l\leqslant 2k\ \ \ \end{split}}}  \overline{L(k,l)}^{\sigma_2, \mbox{sign}(i,j,l)^{\mp}}.
\end{eqnarray}
\end{lem}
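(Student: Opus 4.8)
The plan is to deduce everything from the single-involution orbifold $L(k,0)^{\langle\sigma_2\rangle}$, whose irreducible modules and fusion product are determined in \cite{JW2}, and to recall the mechanism by which the four displayed formulas are extracted there. First I would record the fusion rules of the ambient affine vertex operator algebra $L(k,0)$: by \cite{FZ92} they are the level-$k$ $\widehat{\mathfrak{sl}_2}$ fusion rules,
\[
L(k,i)\boxtimes L(k,j)=\bigoplus_{\substack{|i-j|\leq l\leq i+j\\ i+j+l\in 2\mathbb{Z},\ i+j+l\leq 2k}}L(k,l),
\]
with every multiplicity equal to one. Since $L(k,0)=L(k,0)^{\sigma_2,+}\oplus L(k,0)^{\sigma_2,-}$ is a $\mathbb{Z}_2$-graded (simple current) extension of $L(k,0)^{\langle\sigma_2\rangle}$, every intertwining operator among $L(k,0)$-modules restricts to one among the $\sigma_2$-eigenspace modules, while the twisted modules $\overline{L(k,j)}^{\sigma_2,\pm}$ are furnished by the $\sigma_2$-twisted modules of $L(k,0)$; decomposing the affine intertwining operators (together with their twisted analogues) along the $\mathbb{Z}_2$-grading is precisely the framework of \cite{JW2}.

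The heart of the argument is the sign bookkeeping. Each untwisted module splits as $L(k,i)=L(k,i)^{\sigma_2,+}\oplus L(k,i)^{\sigma_2,-}$ into the $\pm1$-eigenspaces of the lift of $\sigma_2$; since $\sigma_2$ fixes $h^{(2)}$ but negates $e^{(2)}$ and $f^{(2)}$, it acts on the descendant $v^{2,i,j}=\tfrac{1}{j!}f^{(2)}(0)^{j}v^{2,i,0}$ by $(-1)^{j}$ times a fixed scalar, so the two eigenspaces are the even-$j$ and odd-$j$ parts (and likewise for $\overline{L(k,j)}^{\sigma_2,\pm}$). To decide into which eigenspace a fusion channel $L(k,l)$ falls, I would evaluate an intertwining operator of type $\binom{L(k,l)}{L(k,i)\ L(k,j)}$ on these generators and track the $\sigma_2$-eigenvalue of the leading term; the resulting combined parity is governed by the residue of $i+j-l$ modulo $4$, which is exactly the definition of $\mathrm{sign}(i,j,l)^{\pm}$. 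Running this through the four sign combinations of the inputs yields \eqref{e3.21}, \eqref{fusion.untwist2}, \eqref{fusion.twist1} and \eqref{fusion.twist2}, as in \cite{JW2}.

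Although the formulas are displayed for $\sigma_2$, they hold verbatim for $\sigma_1$ and $\sigma_3$: the change of basis carrying the standard triple $\{h,e,f\}$ to $\{h^{(r)},e^{(r)},f^{(r)}\}$ is an inner automorphism of $\mathfrak{sl}_2$ that lifts to an automorphism of $L(k,0)$ conjugating $\sigma_2$ to $\sigma_r$; it induces an isomorphism $L(k,0)^{\langle\sigma_r\rangle}\cong L(k,0)^{\langle\sigma_2\rangle}$ matching untwisted with untwisted and twisted with twisted modules, and any vertex operator algebra isomorphism preserves fusion rules. The step I expect to be the main obstacle is the sign determination of the second paragraph: the quantum dimensions of Proposition \ref{prop3.1} (or \cite{DJX13}) pin down only the total multiplicity $\sum_{\pm}$ in each channel, so the genuine work is to show that exactly one of the two eigenspaces occurs and to identify it correctly, which forces the explicit analysis of the $\sigma_2$-action on intertwining operators rather than a bare dimension count.
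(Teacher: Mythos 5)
Your proposal cannot be matched against an internal argument, because the paper contains none: Lemma \ref{lem3.7} is stated verbatim as a result imported from \cite{JW2} (``We have the following fusion rules \dots from \cite{JW2}''), with no proof given. Judged on its own and against how these rules are actually established in \cite{JW2} and mirrored in the Section 4 proofs of this paper, your route is genuinely different. The literature proof does not analyze the $\sigma_2$-equivariance of affine intertwining operators; it embeds $V_{\Z\gamma}^+\otimes K_0^+$ into the orbifold, decomposes the untwisted and twisted modules via $L(k,i)=\oplus_{j}V_{\Z\gamma+(i-2j)\gamma/2k}\otimes M^{i,j}$ and its $\sigma_2$-twisted analogues, imports Abe's fusion rules for $V_{\Z\gamma}^+$ \cite{Ab01} together with the parafermion fusion rules, and saturates with quantum dimensions --- exactly the pattern of the proofs of the theorems in Section 4 here. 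Your alternative --- restrict the multiplicity-one affine intertwining operators to $\sigma_2$-eigenspaces, read off the receiving eigenspace from the scalar by which the lifted involution acts on the one-dimensional intertwiner space, and close the count with quantum dimensions --- is workable in principle, and your mod-$4$ sign calculus is correct: for instance, the $\sigma_2$-invariant pairing $L(k,i)\otimes L(k,i)\to L(k,0)$ couples $v^{2,i,j}$ with $v^{2,i,i-j}$, whose eigenvalues multiply to $(-1)^i$, matching $\mathrm{sign}(i,i,0)^{\pm}$; likewise your conjugacy observation (the inner automorphism carrying one $\mathfrak{sl}_2$-triple to another conjugates $\sigma_2$ to $\sigma_r$ and transports modules and intertwiners) is the correct explanation of why the lemma can be stated for all $\sigma_r$ while being displayed only for $\sigma_2$. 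Where your sketch leaves genuine work that the decomposition method gets for free is the twisted sector: to make ``the $\sigma_2$-eigenvalue of the leading term'' meaningful in \eqref{fusion.twist1}--\eqref{fusion.twist2} you would additionally need one-dimensionality of the spaces of $\sigma_2$-twisted intertwining operators of the relevant types, a nonvanishing argument for each restricted eigenspace component, and a coherent choice of lifts (for odd $i$ the labels $\pm$ are pure convention), all of which the lattice--parafermion reduction sidesteps by quoting the already-proven $V_L^+$ fusion rules. So: correct in substance and correctly identifying the crux, but a different method from the cited proof the paper relies on, with the twisted-sector technicalities acknowledged rather than carried out.
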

We  also need the following relations which  re-correct some  typos of the original ones in Theorem 5.3 of \cite{JW2}.
\begin{lem}\label{l3.18}
	For $k\in 4\Z+2$, we have
	\begin{eqnarray}
	(M^{\frac{k}{2},0})^{+}\boxtimes W^{\pm}=\sum\limits_{\tiny{\begin{split}0\leq l\leq \frac{k}{2}-1 \\  k+l\in 2\mathbb{Z} \ \ \end{split}}}W(k,l)^{\pm}, \label{e11}
	\end{eqnarray}
	\begin{eqnarray}
	(M^{\frac{k}{2},0})^{-}\boxtimes W^{\pm}=\sum\limits_{\tiny{\begin{split}0\leq l\leq \frac{k}{2}-1 \\  k+l\in 2\mathbb{Z} \ \ \end{split}}} W(k,l)^{\mp},
	\label{e12}
	\end{eqnarray}
	where $W=W(k,\frac{k}{2})$ or $W=\widetilde{W(k,\frac{k}{2})}$.\\
	
	\vskip 0.1cm
	 For $k\in 4\mathbb{Z}$, we have
	\begin{eqnarray}\label{e13}
		(M^{\frac{k}{2},0})^{+}\boxtimes W(k,\frac{k}{2})^{\pm}=\sum\limits_{\tiny{\begin{split}0\leq l\leq \frac{k}{2}-1 \\ k+l\in 2\mathbb{Z} \ \ \ \ \ \end{split}}} W(k,l)^{\pm}+W(k,\frac{k}{2})^{\pm},
	\end{eqnarray}

	\begin{eqnarray}\label{e14}
		(M^{\frac{k}{2},0})^{-}\boxtimes W(k,\frac{k}{2})^{\pm}=\sum\limits_{\tiny{\begin{split}0\leq l\leq \frac{k}{2}-1 \\ k+l\in 2\mathbb{Z} \ \ \ \ \ \end{split}}}  W(k,l)^{\mp}+W(k,\frac{k}{2})^{\mp},
	\end{eqnarray}	
	\begin{eqnarray}\label{e15}
		(M^{\frac{k}{2},0})^{+}\boxtimes \widetilde{W(k,\frac{k}{2})}^{\pm}=\sum\limits_{\tiny{\begin{split}0\leq l\leq \frac{k}{2}-1 \\ k+l\in 2\mathbb{Z} \ \ \ \ \ \end{split}}} W(k,l)^{\pm}+\widetilde{W(k,\frac{k}{2})}^{\mp},
	\end{eqnarray}

	\begin{eqnarray}\label{e16}
		(M^{\frac{k}{2},0})^{-}\boxtimes \widetilde{W(k,\frac{k}{2})}^{\pm}=\sum\limits_{\tiny{\begin{split}0\leq l\leq \frac{k}{2}-1 \\ k+l\in 2\mathbb{Z}  \ \ \ \ \ \end{split}}}  W(k,l)^{\mp}+\widetilde{W(k,\frac{k}{2})}^{\pm}.
	\end{eqnarray}
	\end{lem}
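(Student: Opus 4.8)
The relations in this lemma are the parafermion $\Z_2$-orbifold fusion rules, i.e.\ essentially Theorem~5.3 of \cite{JW2}; the real content is to pin down the correct signs. My plan is therefore to verify the corrected formulas by factoring everything through the conformal subalgebra $V_{\Z\gamma}^+\otimes K_0^+$ of $L(k,0)^{\sigma_2}$, where the corresponding affine fusion rules are already available in Lemma~\ref{lem3.7}. The structural input is that, since $V_{\Z\gamma}^+\otimes K_0^+$ is a tensor product of rational vertex operator algebras, intertwining operators and hence fusion products factor \cite{FHL93}: $N^{\,X_1\otimes Y_1}_{\,X_2\otimes Y_2,\,X_3\otimes Y_3}=N^{X_1}_{X_2,X_3}\,N^{Y_1}_{Y_2,Y_3}$, where the $X_i$ are $V_{\Z\gamma}^+$-modules and the $Y_i$ are $K_0^+$-modules. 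Consequently, once both sides of an affine $\sigma_2$-orbifold fusion rule are decomposed into $V_{\Z\gamma}^+\otimes K_0^+$-modules by Lemmas~\ref{l3.14}--\ref{l3.17}, the rule separates into a lattice part and a parafermion part; knowing the lattice part from \cite{Ab01} lets me read off the parafermion part.

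Concretely, I would first realize each of $(M^{\frac{k}{2},0})^{\pm}$, $W(k,\tfrac{k}{2})^{\pm}$, and $\widetilde{W(k,\tfrac{k}{2})}^{\pm}$ as a $K_0^+$-tensor factor of an appropriate $\sigma_2$-orbifold $L(k,0)^{\sigma_2}$-module, using the decompositions of Lemmas~\ref{l3.14}--\ref{l3.17}; the twisted parafermion factors $W(k,\tfrac{k}{2})^{\pm}$ and $\widetilde{W(k,\tfrac{k}{2})}^{\pm}$ are paired with the lattice modules $V_{\Z\gamma}^{T_a,\pm}$ exactly as in equations (\ref{e3.41})--(\ref{e3.43}), (\ref{e3.58})--(\ref{e3.59}) for $k\in 4\Z$ and (\ref{e3.60})--(\ref{e3.63}) for $k\in 4\Z+2$. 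I would then feed in the twisted affine fusion rules (\ref{fusion.twist1})--(\ref{fusion.twist2}), i.e.\ $L(k,\tfrac{k}{2})^{\sigma_2,\pm}\boxtimes\overline{L(k,\tfrac{k}{2})}^{\sigma_2,\pm}=\sum_l\overline{L(k,l)}^{\sigma_2,\,\mbox{sign}}$, decompose both sides as $V_{\Z\gamma}^+\otimes K_0^+$-modules, and cancel the lattice tensor factors using the $V_{\Z\gamma}^+$-products $V_{\Z\gamma}^{\pm}\boxtimes V_{\Z\gamma}^{T_a,\pm}$ of \cite{Ab01}. What survives is precisely the claimed identity among $K_0^+$-modules, and running through the four external sign choices reproduces (\ref{e11})--(\ref{e16}).

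The hard part, and the source of the typos in \cite{JW2} that these formulas correct, is the sign bookkeeping. The $\pm$ attached to each output $W(k,l)$ or $\widetilde{W(k,\tfrac{k}{2})}$ is governed by three interacting data: the $\sigma_2$-eigenvalue sign carried by the parafermion input, the sign carried by the lattice factor $V_{\Z\gamma}^{T_a,\pm}$, and the combinatorial sign $\mbox{sign}(i,j,l)^{\pm}$ coming from the affine rule; I must check that after the lattice factors are cancelled the residual signs recombine consistently. Particular care is needed at the fixed point $i=\tfrac{k}{2}$, where $\overline{L(k,\tfrac{k}{2})}^{\sigma_2}$ carries the extra summand $\widetilde{W(k,\tfrac{k}{2})}$ (cf.\ (\ref{eq:4.10.})); in the $k\in 4\Z$ case this is exactly what produces the additional terms $W(k,\tfrac{k}{2})^{\pm}$ and $\widetilde{W(k,\tfrac{k}{2})}^{\mp}$ on the right-hand sides of (\ref{e13})--(\ref{e16}) that are absent when $k\in 4\Z+2$. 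As a final safeguard I would compare the total multiplicity on each side with the quantum dimensions of Proposition~\ref{prop3.1}: since quantum dimension is multiplicative under fusion, matching it confirms that no summand has been gained or lost, so that only the signs ever required correction.
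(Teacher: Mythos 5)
Your proposal has nothing in the paper to match: the authors give no proof of Lemma~\ref{l3.18} at all, importing it from Theorem~5.3 of \cite{JW2} with the signs ``re-corrected,'' so your argument must stand on its own. Its general machinery (decompose over $V_{\Z\gamma}^+\otimes K_0^+$, use the fusion rules of \cite{Ab01} for the lattice factor, match quantum dimensions) is indeed the machinery this paper and \cite{JW2} run on, but as outlined it has a genuine gap exactly where the lemma's content lies. Your final safeguard is sign-blind: by Proposition~\ref{prop3.1} and its parafermion analogues one has ${\rm dim}_q W(k,l)^+={\rm dim}_q W(k,l)^-$, and likewise for $W(k,\frac{k}{2})^{\pm}$ and $\widetilde{W(k,\frac{k}{2})}^{\pm}$, so matching quantum dimensions confirms only the multiset of underlying modules and can never certify the $\pm$ labels --- which are precisely the typos of \cite{JW2} being corrected. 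You acknowledge the signs are ``the hard part'' but supply no mechanism that actually determines them. (A side point: the multiplicativity $N^{X_1\otimes Y_1}_{X_2\otimes Y_2,\,X_3\otimes Y_3}=N^{X_1}_{X_2,X_3}N^{Y_1}_{Y_2,Y_3}$ is not in \cite{FHL93}, which only gives the inequality $\geq$; the equality requires rationality of the tensor factors.)

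Worse, your concrete route --- feeding $L(k,\frac{k}{2})^{\sigma_2,\pm}\boxtimes\overline{L(k,\frac{k}{2})}^{\sigma_2,\pm}$ from (\ref{fusion.twist1})--(\ref{fusion.twist2}) into the decompositions and cancelling lattice factors --- cannot even see the left-hand factor $(M^{\frac{k}{2},0})^{\pm}$. In $L(k,\frac{k}{2})=\bigoplus_{j}V_{\mathbb{Z}\gamma+(\frac{k}{2}-2j)\gamma/2k}\otimes M^{\frac{k}{2},j}$ the involution $\sigma_2$ interchanges the $j=0$ and $j=\frac{k}{2}$ summands, so inside $L(k,\frac{k}{2})^{\sigma_2,\pm}$ the module $M^{\frac{k}{2},0}$ occurs only un-split, as a diagonal copy of $V_{\Z\gamma+\frac{1}{4}\gamma}\otimes M^{\frac{k}{2},0}$ carrying both $(M^{\frac{k}{2},0})^{+}$ and $(M^{\frac{k}{2},0})^{-}$ against the same lattice module (for $k\in 4\Z+2$ no summand of $L(k,\frac{k}{2})$ is $\sigma_2$-fixed at all). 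The sign-split realization $V_{\Z\gamma}^{\pm}\otimes(M^{\frac{k}{2},0})^{\pm}$ lives only in the $\sigma_1$-twisted module $\overline{L(k,\frac{k}{2})}^{\sigma_1}$ (Lemma~\ref{l3.1}, cf.\ (\ref{eq103})), so the fusions (\ref{e11})--(\ref{e16}) become visible at the affine level only in products of a $\sigma_1$-twisted with a $\sigma_2$-twisted module, i.e.\ in the Klein-orbifold fusions of Theorems~\ref{t3.16} and~\ref{t3.17} --- but in this paper those theorems are \emph{deduced from} Lemma~\ref{l3.18}, so invoking them would be circular. To fix the signs one needs a direct parafermion-level computation --- tracking explicit top-level vectors and using associativity with simple currents, in the spirit of the explicit twisted-action calculations of Lemmas~\ref{lem01} and~\ref{lem02} --- and this is exactly the step your outline defers rather than performs.
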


\section{Fusion rules  of $L_{\widehat{\mathfrak{sl}_2}}(k,0)^{K}$}

In this section we  give  the fusion rules of irreducible modules of $L(k,0)^K$  through several theorems. 
\begin{thm}\label{thm3.8}
\begin{equation}\label{e3.25}
L(k,i)^{+}\boxtimes L(k,j)^{+}=\sum\limits_{\tiny{\begin{split}|i-j|\leqslant l\leqslant i+j \\  l\in 2\mathbb{Z}, i+j+l\leqslant 2k\ \ \ \end{split}}} L(k,l), \ i,j\in 2\Z+1,
\end{equation}
\begin{equation}\label{e3.26}
L(k,i)^{+}\boxtimes {L(k,j)}^{r}=\sum\limits_{\tiny{\begin{split}|i-j|\leqslant l\leqslant i+j \\  i+j+l\in 2\mathbb{Z} \ \ \ \\ i+j+l\leqslant 2k\ \ \ \end{split}}}  {L(k,l)}^{+}, \ i\in2\Z+1, \ j\in 2\Z, \ r=1,2,3,4.
\end{equation}
For $i,j\in2\Z$, we have
\begin{equation}\label{e3.27}
L(k,i)^{r}\boxtimes {L(k,j)}^{r}=\sum\limits_{\tiny{\begin{split}|i-j|\leqslant l\leqslant i+j \\  i+j+l\in 4\mathbb{Z} \ \ \ \\ i+j+l\leqslant 2k\ \ \ \end{split}}}  {L(k,l)}^{1}\oplus\sum\limits_{\tiny{\begin{split}|i-j|\leqslant l\leqslant i+j \\  i+j+l\in 4\mathbb{Z} +2\ \ \ \\ i+j+l\leqslant 2k\ \ \ \end{split}}}  {L(k,l)}^{4}, \quad r=1,2,3,4,
\end{equation}	
\begin{equation}\label{e3.28}
L(k,i)^{3}\boxtimes {L(k,j)}^{4}=L(k,i)^{2}\boxtimes {L(k,j)}^{1}=\sum\limits_{\tiny{\begin{split}|i-j|\leqslant l\leqslant i+j \\  i+j+l\in 4\mathbb{Z} \ \ \ \\ i+j+l\leqslant 2k\ \ \ \end{split}}}  {L(k,l)}^{2}\oplus\sum\limits_{\tiny{\begin{split}|i-j|\leqslant l\leqslant i+j \\ i+j+ l\in 4\mathbb{Z} +2\ \ \ \\ i+j+l\leqslant 2k\ \ \ \end{split}}}  {L(k,l)}^{3}, 
\end{equation}
\begin{equation}\label{e3.29}
L(k,i)^{2}\boxtimes {L(k,j)}^{4}=L(k,i)^{3}\boxtimes {L(k,j)}^{1}=\sum\limits_{\tiny{\begin{split}|i-j|\leqslant l\leqslant i+j \\  i+j+l\in 4\mathbb{Z} \ \ \ \\ i+j+l\leqslant 2k\ \ \ \end{split}}}  {L(k,l)}^{3}\oplus\sum\limits_{\tiny{\begin{split}|i-j|\leqslant l\leqslant i+j \\  i+j+l\in 4\mathbb{Z} +2\ \ \ \\ i+j+l\leqslant 2k\ \ \ \end{split}}}  {L(k,l)}^{2}, 
\end{equation}
\begin{equation}\label{e3.30}
L(k,i)^{2}\boxtimes {L(k,j)}^{3}=L(k,i)^{4}\boxtimes {L(k,j)}^{1}=\sum\limits_{\tiny{\begin{split}|i-j|\leqslant l\leqslant i+j \\  i+j+l\in 4\mathbb{Z} \ \ \ \\ i+j+l\leqslant 2k\ \ \ \end{split}}}  {L(k,l)}^{4}\oplus\sum\limits_{\tiny{\begin{split}|i-j|\leqslant l\leqslant i+j \\  i+j+l\in 4\mathbb{Z} +2\ \ \ \\ i+j+l\leqslant 2k\ \ \ \end{split}}}  {L(k,l)}^{1}.
\end{equation}
\end{thm}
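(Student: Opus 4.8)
The plan is to reduce every product to a few base cases and then propagate them using the simple-current structure of Lemma~\ref{l3.7}. By Proposition~\ref{prop3.1} the modules $L(k,0)^{1}=L(k,0)^{K},L(k,0)^{2},L(k,0)^{3},L(k,0)^{4}$ all have quantum dimension $1$, hence are simple currents, and Lemma~\ref{l3.7} shows they form a group isomorphic to $K$ under fusion, with unit $L(k,0)^{1}$ and $L(k,0)^{r}\boxtimes L(k,0)^{s}=L(k,0)^{t}$ for $\{r,s,t\}=\{2,3,4\}$. Since $L(k,i)^{r}=L(k,0)^{r}\boxtimes L(k,i)^{1}$ for even $i$, while $L(k,i)^{+}\boxtimes L(k,0)^{r}\cong L(k,i)^{+}$ for odd $i$ (the unique untwisted irreducible of quantum dimension $2$ in the $i$-sector), all of \eqref{e3.26}--\eqref{e3.30} follow from three base cases: \eqref{e3.25}, the $r=1$ instance of \eqref{e3.26}, and the $r=1$ instance of \eqref{e3.27}. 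For example $L(k,i)^{2}\boxtimes L(k,j)^{1}=L(k,0)^{2}\boxtimes\bigl(L(k,i)^{1}\boxtimes L(k,j)^{1}\bigr)$, and distributing $L(k,0)^{2}$ over the right side of \eqref{e3.27} via $L(k,0)^{2}\boxtimes L(k,l)^{1}=L(k,l)^{2}$ and $L(k,0)^{2}\boxtimes L(k,l)^{4}=L(k,l)^{3}$ reproduces \eqref{e3.28}; the remaining identities are analogous.

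\textbf{Even base case.} For $i,j\in 2\Z$ I would obtain $L(k,i)^{1}\boxtimes L(k,j)^{1}$ directly from the $L(k,0)$-intertwining operators $I$ of type $\binom{L(k,l)}{L(k,i)\,L(k,j)}$, whose space is one-dimensional precisely when $N_{ij}^{l}=1$, i.e.\ for even $l$ with $|i-j|\leq l\leq i+j$ and $i+j+l\leq 2k$. The group $K$ acts on this line by a character $\chi_{l}$. Since $\sigma_{1}$ fixes the highest-weight vectors $v^{i,0},v^{j,0}$, the scalar $\chi_{l}(\sigma_{1})$ is simply the $\sigma_{1}$-eigenvalue of $I(v^{i,0},z)v^{j,0}$; this element has $h(0)$-weight $i+j$, and (from $\sigma_{1}(e)=-e,\ \sigma_{1}(f)=-f,\ \sigma_{1}(h)=h$) the automorphism $\sigma_{1}$ acts on the $h$-weight-$w$ subspace of $L(k,l)$ by $(-1)^{(w-l)/2}$, so $\chi_{l}(\sigma_{1})=(-1)^{(i+j-l)/2}$. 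A $PSL(2)$-element conjugating $\sigma_{1}$ to $\sigma_{2}$ fixes each type $\binom{L(k,l)}{L(k,i)L(k,j)}$, whence $\chi_{l}(\sigma_{2})=\chi_{l}(\sigma_{1})$. As $L(k,i)^{1},L(k,j)^{1}$ carry the trivial $K$-character, the restricted operator lands in $L(k,l)^{(\chi_{l})}$, namely $L(k,l)^{1}$ for $i+j+l\in 4\Z$ and $L(k,l)^{4}$ for $i+j+l\in 4\Z+2$. This candidate already has total quantum dimension $\sum_{l}\mathrm{qdim}\,L(k,l)^{1}=\mathrm{qdim}\,L(k,i)^{1}\cdot\mathrm{qdim}\,L(k,j)^{1}$, so the multiplicativity in Proposition~\ref{prop3.1} leaves no room for additional summands and pins every multiplicity at $1$, giving \eqref{e3.27}.

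\textbf{Odd base cases.} For $i,j\in 2\Z+1$ I pass to the tower $L(k,0)^{K}\subset L(k,0)^{\sigma_{1}}\subset L(k,0)$, viewing $L(k,0)^{K}$ as the $\langle\bar\sigma_{2}\rangle$-orbifold of $L(k,0)^{\sigma_{1}}$. From $\sigma_{2}v^{i,j}=v^{i,i-j}$ one checks that $\bar\sigma_{2}$ \emph{swaps} the two irreducible $L(k,0)^{\sigma_{1}}$-modules $L(k,i)^{\sigma_{1},+}$ and $L(k,i)^{\sigma_{1},-}$, so they form a single orbit restricting to the irreducible $K$-module $L(k,i)^{+}$. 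The $\mathbb{Z}_{2}$-orbifold induction for orbit-type modules then identifies $L(k,i)^{+}\boxtimes_{K}L(k,j)^{+}$ with the $K$-restriction of the two mixed fusions $L(k,i)^{\sigma_{1},+}\boxtimes_{\sigma_{1}}L(k,j)^{\sigma_{1},\pm}$ (fusion over $L(k,0)^{\sigma_{1}}$). By Lemma~\ref{lem3.7} applied to $\sigma_{1}$ these equal $\sum_{l}L(k,l)^{\sigma_{1},\mbox{sign}(i,j,l)^{+}}$ and $\sum_{l}L(k,l)^{\sigma_{1},\mbox{sign}(i,j,l)^{-}}$; for each admissible even $l$ their restrictions furnish the complementary $\sigma_{2}$-eigenpieces $\{L(k,l)^{1},L(k,l)^{2}\}$ and $\{L(k,l)^{3},L(k,l)^{4}\}$, so together they produce the full $L(k,l)$, which is \eqref{e3.25}. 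The mixed base case $L(k,i)^{+}\boxtimes L(k,j)^{1}$ ($i$ odd, $j$ even) is handled identically, now with one orbit factor and one split factor, and a comparison with Proposition~\ref{prop3.1} fixes the multiplicities to yield the $r=1$ case of \eqref{e3.26}.

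\textbf{Main obstacle.} The delicate point is the $\mathbb{Z}_{2}$-orbifold bookkeeping: for each intermediate $\sigma_{1}$-module one must decide whether $\bar\sigma_{2}$ stabilises it---producing a split governed by the character value $\chi_{l}(\sigma_{2})$ as in the even case---or permutes it with a partner---producing an orbit handled by summing the two mixed fusions as in the odd case---and then one must verify that the $\bar\sigma_{2}$-twisted sector contributes no extra intertwining operators among these untwisted modules. Rather than invoke a black-box orbifold theorem, I expect to settle both the value of $\chi_{l}(\sigma_{2})$ (the $PSL(2)$-conjugacy step needs the identifications $L(k,m)\cong\tau L(k,m)$ to be compatible) and the orbit-induction formula by computing the relevant $\bar\sigma_{2}$-characters explicitly from $\sigma_{2}v^{i,j}=v^{i,i-j}$ on the leading coefficients of the intertwining operators, using the multiplicativity of quantum dimension in Proposition~\ref{prop3.1} as a rigid consistency check that excludes spurious summands.
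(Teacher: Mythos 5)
Your overall architecture is sound and, in two of its three parts, coincides with the paper's: the odd--odd and odd--even cases by restricting the known $\Z_2$-orbifold fusion rules of Lemma \ref{lem3.7} and pinning multiplicities with the quantum dimensions of Proposition \ref{prop3.1} (the paper does exactly this, using $L(k,i)^+\cong L(k,i)^-$ for odd $i$), and the propagation of the even base case to all of (\ref{e3.27})--(\ref{e3.30}) via the simple currents $L(k,0)^r$ of Lemma \ref{l3.7} and associativity (again exactly the paper's last step). The genuinely different ingredient is your even--even base case, where you replace the paper's mechanism by a $K$-equivariance analysis of the one-dimensional spaces of affine $\mathfrak{sl}_2$ intertwining operators. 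That route is viable in principle, and your computation $\chi_l(\sigma_1)=(-1)^{(i+j-l)/2}$ is correct; the quantum-dimension saturation argument closing it off is also fine (note it simultaneously excludes twisted-type summands, which in any case cannot occur by the $g_3=g_1g_2$ constraint of \cite{DLM96-1} recalled in Section 2).

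The genuine gap is the step $\chi_l(\sigma_2)=\chi_l(\sigma_1)$. The character $\chi_l$ of $K$ on the intertwiner line is only defined after fixing lifts of $\sigma_1$ and $\sigma_2$ to each of $L(k,i)$, $L(k,j)$, $L(k,l)$, and these lifts are each ambiguous by a sign. A $PSL(2)$-element $\tau$ with $\tau\sigma_1\tau^{-1}=\sigma_2$ only transports the $\sigma_1$-eigenvalue to the eigenvalue of the $\tau$-transported lift of $\sigma_2$; this may differ from the paper's normalized lift (the one fixed by $v^{2,i,i}=\sum_j(-1)^jv^{i,j}$, which underlies the labels $L(k,l)^{\sigma_2,j}$ in Lemma \ref{l3.8}) by signs $\epsilon_i\epsilon_j\epsilon_l$, and a single wrong sign would replace $L(k,l)^1$ by $L(k,l)^2$ (or $L(k,l)^4$ by $L(k,l)^3$) in (\ref{e3.27}) while passing every quantum-dimension check, since all four pieces have equal quantum dimension $d_l$. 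So the conjugation argument as stated proves nothing about which refined summand occurs, and the fallback you sketch---extracting $\chi_l(\sigma_2)$ from the leading coefficients, i.e.\ from the Clebsch--Gordan symmetry of the top-level $\mathfrak{sl}_2$-map $L(i)\otimes L(j)\to L(l)$ under the Weyl flip, made compatible with the paper's normalization---is precisely the unexecuted content of your proof. This deferred sign bookkeeping is what the paper obtains ready-made: it identifies the $\sigma_1$- and $\sigma_2$-labels via the explicit vector computations of Lemma \ref{l3.8}, embeds $V_{\Z\gamma}^+\otimes K_0^+\subseteq L(k,0)^K$, and tracks the distinguished submodules $V_{\Z\gamma}^+\otimes(M^{i,\frac{i}{2}})^{\pm}$ through the already-proven refined fusion rules of $V_{\Z\gamma}^+$ \cite{Ab01} and of the orbifold parafermion algebra (Theorem 5.1 of \cite{JW2}), where the analogous signs were established once and for all. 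Until you actually carry out the Clebsch--Gordan sign computation (with the normalizations matched to those defining $L(k,l)^{\sigma_2,j}$), your even base case, and hence (\ref{e3.27})--(\ref{e3.30}), is not proved.
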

\begin{proof} Notice that for $i\in 2\Z+1$, $L(k,i) ^+\cong L(k,i)^-$ as irreducible $L(k,0)^K$-modules. Then it is easy to see that  (\ref{e3.25})-(\ref{e3.26})  follow from Proposition \ref{prop3.1} and Lemma \ref{lem3.7}. 
For the rest formulas,	we first prove (\ref{e3.27}) for $r=1$. Since by Lemma \ref{l3.8}, for $i\in 2\Z$, 
$$
L(k,i)^1=L(k,i)^{\sigma_2,1}, \ L(k,i)^4=L(k,i)^{\sigma_2,4},
$$
it is enough to prove that for $i,j\in 2\Z$, 
$$
L(k,i)^{\sigma_2,1}\boxtimes {L(k,j)}^{\sigma_2,1}=\sum\limits_{\tiny{\begin{split}|i-j|\leqslant l\leqslant i+j \\  i+j+l\in 4\mathbb{Z} \ \ \ \\ i+j+l\leqslant 2k\ \ \ \end{split}}}  {L(k,l)}^{\sigma_2,1}\oplus\sum\limits_{\tiny{\begin{split}|i-j|\leqslant l\leqslant i+j \\ i+j+ l\in 4\mathbb{Z} +2\ \ \ \\ i+j+l\leqslant 2k\ \ \ \end{split}}}  {L(k,l)}^{\sigma_2,4}.
$$
	Recall that 
	$$
	L(k,i)=\bigoplus_{j=0}^{k-1}V_{\mathbb{Z}\gamma+(i-2j)\gamma/2k}\otimes M^{i,j} \ \ \ \mbox{for} \ 0\leq i\leq k,
	$$
	and for $i\in 2\Z$, 
	$M^{i,\frac{i}{2}}$ is generated by $v^{i,\frac{i}{2}}$. Then for $i\in 4\Z$, 
	\begin{eqnarray}\label{e3.33}
	V_{\mathbb{Z}\gamma}^+\otimes (M^{i,\frac{i}{2}})^+\subseteq L(k,i)^{\sigma_2,1}, \quad 
	V_{\mathbb{Z}\gamma}^+\otimes (M^{i,\frac{i}{2}})^-\subseteq L(k,i)^{\sigma_2,3} 
	\end{eqnarray}
	 and for 
	$i\in 4\Z+2$, 
	\begin{eqnarray}\label{e3.34}
	V_{\mathbb{Z}\gamma}^+\otimes (M^{i,\frac{i}{2}})^+\subseteq L(k,i)^{\sigma_2,2}, \quad V_{\mathbb{Z}\gamma}^+\otimes (M^{i,\frac{i}{2}})^-\subseteq L(k,i)^{\sigma_2,4}.
	\end{eqnarray}
	By (\ref{e3.21}), we have 
	 \begin{eqnarray}\label{e3.32}
	L(k,i)^{\sigma_2,+}\boxtimes L(k,j)^{\sigma_2,+}=\sum\limits_{\tiny{\begin{split}|i-j|\leq l\leq i+j \\  i+j+l\in 4\mathbb{Z} \ \ \ \\ i+j+l\leq 2k\ \ \ \end{split}}} L(k,l)^{\sigma_2,+}\oplus 
	\sum\limits_{\tiny{\begin{split}|i-j|\leq l\leq i+j \\  i+j+l\in 4\mathbb{Z}+2\ \ \ \\ i+j+l\leq 2k\ \ \ \end{split}}}
	L(k,l)^{\sigma_2,-}.
	\end{eqnarray}
	 By Theorem 5.1 of \cite{JW2}, we have for $i,j\in 2\Z_+$, 
	 \begin{eqnarray}
	 (M^{i,\frac{i}{2}})^+\boxtimes (M^{j,\frac{j}{2}})^+=\sum\limits_{\tiny{\begin{split}|i-j|\leq l\leq i+j \\  i+j+l\in 4\mathbb{Z} \ \ \
	  \\ i+j+l\leq 2k\ \ \ \end{split}}} (M^{l,\frac{l}{2}})^+\oplus 
	\sum\limits_{\tiny{\begin{split}|i-j|\leq l\leq i+j \\  i+j+l\in 4\mathbb{Z}+2\ \ \ \\ i+j+l\leq 2k\ \ \ \end{split}}} (M^{l,\frac{l}{2}})^-.\label{eq:5.31}
	\end{eqnarray}
	So
	 \begin{eqnarray}
	\begin{split}&(V_{\Z\gamma}^+\otimes (M^{i,\frac{i}{2}})^+)\boxtimes(V_{\Z\gamma}^+\otimes (M^{j,\frac{j}{2}})^+)\\
	&=\sum\limits_{\tiny{\begin{split}|i-j|\leq l\leq i+j \\  i+
			j+l\in 4\mathbb{Z} \ \ \
			\\ i+j+l\leq 2k\ \ \ \end{split}}} V_{\Z\gamma}^+\otimes (M^{l,\frac{l}{2}})^+\oplus 
	\sum\limits_{\tiny{\begin{split}|i-j|\leq l\leq i+j \\  i+j+l\in 4\mathbb{Z}+2\ \ \ \\ i+j+l\leq 2k\ \ \ \end{split}}}V_{\Z\gamma}^+\otimes (M^{l,\frac{l}{2}})^-.\label{e101}
	\end{split}
	\end{eqnarray}
	Notice that for $0\leq l\leq k$, 
	$$L(k,l)^{\sigma_2,+}=L(k,l)^{\sigma_2,1}\oplus L(k,l)^{\sigma_2,2}, \quad L(k,l)^{\sigma_2,-}=L(k,l)^{\sigma_2,3}\oplus L(k,l)^{\sigma_2,4}.$$ 
Then  by (\ref{e3.33})-(\ref{e101}),  if $i,j\in 4\Z$, 
$$
L(k,i)^{\sigma_2,1}\boxtimes {L(k,j)}^{\sigma_2,1}=\sum\limits_{\tiny{\begin{split}|i-j|\leqslant l\leqslant i+j \\  i+j+l\in 4\mathbb{Z} \ \ \ \\ i+j+l\leqslant 2k\ \ \ \end{split}}}  {L(k,l)}^{\sigma_2,1}\oplus\sum\limits_{\tiny{\begin{split}|i-j|\leqslant l\leqslant i+j \\ i+j+ l\in 4\mathbb{Z} +2\ \ \ \\ i+j+l\leqslant 2k\ \ \ \end{split}}}  {L(k,l)}^{\sigma_2,4}.
$$
If $i\in 4\Z$, $j\in 4\Z+2$, then
$$
L(k,i)^{\sigma_2,1}\boxtimes {L(k,j)}^{\sigma_2,2}=\sum\limits_{\tiny{\begin{split}|i-j|\leqslant l\leqslant i+j \\  i+j+l\in 4\mathbb{Z} \ \ \ \\ i+j+l\leqslant 2k\ \ \ \end{split}}}  {L(k,l)}^{\sigma_2,2}\oplus\sum\limits_{\tiny{\begin{split}|i-j|\leqslant l\leqslant i+j \\ i+j+ l\in 4\mathbb{Z} +2\ \ \ \\ i+j+l\leqslant 2k\ \ \ \end{split}}}  {L(k,l)}^{\sigma_2,3}.
$$
If $i,j\in 4\Z+2$, then 
$$
L(k,i)^{\sigma_2,2}\boxtimes {L(k,j)}^{\sigma_2,2}=\sum\limits_{\tiny{\begin{split}|i-j|\leqslant l\leqslant i+j \\  i+j+l\in 4\mathbb{Z} \ \ \ \\ i+j+l\leqslant 2k\ \ \ \end{split}}}  {L(k,l)}^{\sigma_2,1}\oplus\sum\limits_{\tiny{\begin{split}|i-j|\leqslant l\leqslant i+j \\ i+j+ l\in 4\mathbb{Z} +2\ \ \ \\ i+j+l\leqslant 2k\ \ \ \end{split}}}  {L(k,l)}^{\sigma_2,4}.
$$
Since by Lemma \ref{l3.7}, 
$$
L(k,0)^{\sigma_2, 2}\boxtimes L(k,i)^{\sigma_2,1}=L(k,i)^{\sigma_2,2}, \ L(k,0)^{\sigma_2, 2}\boxtimes L(k,i)^{\sigma_2,3}=L(k,i)^{\sigma_2,4},
$$
and 
$$
L(k,0)^{\sigma_2, r}\boxtimes L(k,0)^{\sigma_2,r}=L(k,0)^{\sigma_2,1}, \ r=1,2,3,4,
$$
it follows that for $i,j\in 2\Z$, 
$$
L(k,i)^{\sigma_2,1}\boxtimes {L(k,j)}^{\sigma_2,1}=\sum\limits_{\tiny{\begin{split}|i-j|\leqslant l\leqslant i+j \\  i+j+l\in 4\mathbb{Z} \ \ \ \\ i+j+l\leqslant 2k\ \ \ \end{split}}}  {L(k,l)}^{\sigma_2,1}\oplus\sum\limits_{\tiny{\begin{split}|i-j|\leqslant l\leqslant i+j \\ i+j+ l\in 4\mathbb{Z} +2\ \ \ \\ i+j+l\leqslant 2k\ \ \ \end{split}}}  {L(k,l)}^{\sigma_2,4}.
$$
We show that  (\ref{e3.27}) is true  for $r=1$. (\ref{e3.27}) for $r=2,3,4$ and 
(\ref{e3.28})-(\ref{e3.30})  then follow from (\ref{e3.27}) for $r=1$, Lemma \ref{l3.7},  and the associativity of fusion product.
\end{proof}
\begin{thm}\label{thm3.9}
	 For $i\in 2\Z_++1$, $0\leq j<\frac{k}{2}$,  $r=1,2,3$,
\begin{equation}\label{e3.35}
L(k,i)^{+}\boxtimes \overline{L(k,j)}^{\sigma_r,+}=L(k,i)^{+}\boxtimes \overline{L(k,j)}^{\sigma_r,-}
=\sum\limits_{\tiny{\begin{split}|i-j|\leq l\leq i+j \\  i+j+l\in 2\mathbb{Z} \ \ \
		\\ i+j+l\leq 2k\ \ \ \end{split}}}(\overline{L(k,l)}^{\sigma_r,+}+\overline{L(k,l)}^{\sigma_r,-}).
\end{equation}
For  $k\in 2\Z_+$, $i\in 2\Z_++1$, $r=1,2,3$, $j=1,2,3,4$, 
\begin{equation}\label{e3.36}
	L(k,i)^{+}\boxtimes \overline{L(k,\frac{k}{2})}^{\sigma_r,j}
	=\sum\limits_{\tiny{\begin{split}|i-j|\leq l\leq i+j \\  i+j+l\in 2\mathbb{Z} \ \ \
			\\ l\leq \frac{k}{2}-1\ \ \ \end{split}}}(\overline{L(k,l)}^{\sigma_r,+}+\overline{L(k,l)}^{\sigma_r,-}).
\end{equation}
		For $i\in 2\Z_+$, $0\leq j< \frac{k}{2}$, 
	$$
	L(k,i)^{1}\boxtimes \overline{L(k,j)}^{\sigma_1,\pm}=L(k,i)^{2}\boxtimes \overline{L(k,j)}^{\sigma_1,\pm}
	=\sum\limits_{\tiny{\begin{split}|i-j|\leq l\leq i+j \\  i+j+l\in 2\mathbb{Z} \ \ \
			\\ i+j+l\leq 2k\ \ \ \end{split}}}\overline{L(k,l)}^{\sigma_1,sign(i,j,l)^{\pm}},$$			
	$$
L(k,i)^{3}\boxtimes \overline{L(k,j)}^{\sigma_1,\pm}=L(k,i)^{4}\boxtimes \overline{L(k,j)}^{\sigma_1,\pm}
=\sum\limits_{\tiny{\begin{split}|i-j|\leq l\leq i+j \\  i+j+l\in 2\mathbb{Z} \ \ \
		\\ i+j+l\leq 2k\ \ \ \end{split}}}\overline{L(k,l)}^{\sigma_1,sign(i,j,l)^{\mp}},$$	
$$
L(k,i)^{1}\boxtimes \overline{L(k,j)}^{\sigma_2,\pm}=L(k,i)^{3}\boxtimes \overline{L(k,j)}^{\sigma_2,\pm}
=\sum\limits_{\tiny{\begin{split}|i-j|\leq l\leq i+j \\  i+j+l\in 2\mathbb{Z} \ \ \
		\\ i+j+l\leq 2k\ \ \ \end{split}}}\overline{L(k,l)}^{\sigma_2,sign(i,j,l)^{\pm}},$$	
	$$
	L(k,i)^{2}\boxtimes \overline{L(k,j)}^{\sigma_2,\pm}=L(k,i)^{4}\boxtimes \overline{L(k,j)}^{\sigma_2,\pm}
	=\sum\limits_{\tiny{\begin{split}|i-j|\leq l\leq i+j \\  i+j+l\in 2\mathbb{Z} \ \ \
			\\ i+j+l\leq 2k\ \ \ \end{split}}}\overline{L(k,l)}^{\sigma_2,sign(i,j,l)^{\mp}},$$		
$$
L(k,i)^{2}\boxtimes \overline{L(k,j)}^{\sigma_3,\pm}=L(k,i)^{3}\boxtimes \overline{L(k,j)}^{\sigma_3,\pm}
=\sum\limits_{\tiny{\begin{split}|i-j|\leq l\leq i+j \\  j-l\in 4\mathbb{Z}+2 \ \ \
		\\ i+j+l\leq 2k\ \ \ \end{split}}}\overline{L(k,l)}^{\sigma_3,\pm}\oplus 
	\sum\limits_{\tiny{\begin{split}|i-j|\leq l\leq i+j \\  j-l\in 4\mathbb{Z} \ \ \
			\\ i+j+l\leq 2k\ \ \ \end{split}}}\overline{L(k,l)}^{\sigma_3,\mp},$$	
		$$
		L(k,i)^{1}\boxtimes \overline{L(k,j)}^{\sigma_3,\pm}=L(k,i)^{4}\boxtimes \overline{L(k,j)}^{\sigma_3,\pm}
		=\sum\limits_{\tiny{\begin{split}|i-j|\leq l\leq i+j \\  j-l\in 4\mathbb{Z}+2 \ \ \
				\\ i+j+l\leq 2k\ \ \ \end{split}}}\overline{L(k,l)}^{\sigma_3,\mp}\oplus 
		\sum\limits_{\tiny{\begin{split}|i-j|\leq l\leq i+j \\  j-l\in 4\mathbb{Z} \ \ \
				\\ i+j+l\leq 2k\ \ \ \end{split}}}\overline{L(k,l)}^{\sigma_3,\pm}.$$			
\end{thm}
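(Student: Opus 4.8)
The plan is to reduce every product to three pieces of data that are already under control: the fusion rules of the $\Z_2$-orbifolds $L(k,0)^{\langle\sigma_r\rangle}$ (Lemma \ref{lem3.7}), the action of the order-two modules $L(k,0)^{2},L(k,0)^{3},L(k,0)^{4}$ recorded in Lemma \ref{l3.7} and in (\ref{e1})--(\ref{e3}), and the quantum dimensions of Proposition \ref{prop3.1}, which are multiplicative under $\boxtimes$. The guiding principle throughout is a two-sided estimate: restriction of an intertwining operator to a subalgebra gives injective maps on spaces of intertwiners, hence lower bounds on individual fusion coefficients, while multiplicativity of the quantum dimension fixes the total on the right-hand side and therefore forces every multiplicity to be exactly one once the support is known.

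For $i\in2\Z+1$ the module $L(k,i)^{+}$ is the (irreducible) restriction to $L(k,0)^{K}$ of the $L(k,0)^{\sigma_r}$-module $L(k,i)^{\sigma_r,+}$, and $L(k,i)^{\sigma_r,+}\cong L(k,i)^{\sigma_r,-}$ as $L(k,0)^{K}$-modules. I would restrict the two multiplicity-one intertwining operators provided by (\ref{fusion.twist1}) and (\ref{fusion.twist2}): since these output $\overline{L(k,l)}^{\sigma_r,\mathrm{sign}(i,j,l)^{+}}$ and $\overline{L(k,l)}^{\sigma_r,\mathrm{sign}(i,j,l)^{-}}$, which for fixed $l$ are the two distinct modules $\overline{L(k,l)}^{\sigma_r,\pm}$, each of them occurs with multiplicity at least one. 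Comparing $\mathrm{qdim}\,L(k,i)^{+}\cdot\mathrm{qdim}\,\overline{L(k,j)}^{\sigma_r,\pm}$ from Proposition \ref{prop3.1} with the total quantum dimension of the right-hand side of (\ref{e3.35}) (the standard truncated $\widehat{\mathfrak{sl}}_2$ Clebsch--Gordan identity for the quantities $\sin\frac{(l+1)\pi}{k+2}/\sin\frac{\pi}{k+2}$) shows these exhaust the product, giving (\ref{e3.35}). For (\ref{e3.36}) the same scheme applies once the extra splitting at $l=\frac{k}{2}$ is fed in through Lemma \ref{l3.17} and (\ref{e3}); the special value $\sin\frac{(k/2+1)\pi}{k+2}=1$ makes the quantum-dimension count close and, in particular, forces the absence of an $l=\frac{k}{2}$ summand.

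For $i\in2\Z$ I would settle one representative product and then propagate. In the $\sigma_1$ case I would establish $L(k,i)^{1}\boxtimes\overline{L(k,j)}^{\sigma_1,\pm}=\sum_l\overline{L(k,l)}^{\sigma_1,\mathrm{sign}(i,j,l)^{\pm}}$ from the $\langle\sigma_1\rangle$-orbifold rule (\ref{fusion.twist1}) together with Proposition \ref{prop3.1}; here $\mathrm{qdim}\,L(k,i)^{1}=\sin\frac{(i+1)\pi}{k+2}/\sin\frac{\pi}{k+2}$ carries no factor $2$, so the quantum-dimension bound permits only a single sign per $l$. The remaining three products then follow formally: using $L(k,i)^{r}=L(k,0)^{r}\boxtimes L(k,i)^{1}$ (Lemma \ref{l3.7}), associativity of $\boxtimes$, and (\ref{e1})--(\ref{e2}) -- which say that $L(k,0)^{2}$ fixes each $\overline{L(k,l)}^{\sigma_1,\pm}$ while $L(k,0)^{3},L(k,0)^{4}$ interchange the two signs -- one reads off the formulas for $L(k,i)^{2},L(k,i)^{3},L(k,i)^{4}$, and in particular that $L(k,i)^{1}$ and $L(k,i)^{2}$ fuse identically with the twisted modules. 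The $\sigma_2$ block is handled the same way after the relabelings $L(k,i)^{r}=L(k,i)^{\sigma_2,s}$ of Lemma \ref{l3.8}.

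The $\sigma_3$ block is the delicate part, and there I would pass to the conformal subalgebra $V_{\Z\gamma}^{+}\otimes K_0^{+}$: decompose both factors by Lemmas \ref{l3.16}--\ref{l3.17}, compute at the subalgebra level using Abe's fusion rules for $V_{\Z\gamma}^{+}$ \cite{Ab01} (carefully tracking the twisted sectors $V_{\Z\gamma}^{T_1},V_{\Z\gamma}^{T_2}$) and the parafermion rules of Lemma \ref{l3.18}, obtain an upper bound on the $L(k,0)^{K}$-constituents, and then pin the multiplicities against Proposition \ref{prop3.1}. The main obstacle is exactly this $\sigma_3$ bookkeeping: because the parafermion orbifold $K_0^{+}=K_0^{\sigma_2}$ is taken with respect to $\sigma_2\neq\sigma_3$, the uniform $\mathrm{sign}(i,j,l)$ pattern no longer survives, and after combining the $i\bmod 4$-dependent identifications of Lemma \ref{l3.8} with the interchange of the $T_1/T_2$ sectors under fusion it collapses to the parity condition $j-l\in4\Z$ versus $j-l\in4\Z+2$ appearing in the last two displays; checking that the signs and twisted-sector labels remain consistent across all residues of $k$ and $i$ modulo $4$ is where the genuine care is required.
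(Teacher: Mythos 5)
Your treatment of (\ref{e3.35})--(\ref{e3.36}) and of the $\sigma_1$ and $\sigma_2$ blocks is correct and is essentially the paper's own argument: the paper derives the odd-$i$ formulas from Lemma~\ref{lem3.7}, Proposition~\ref{prop3.1} and $L(k,i)^{+}\cong L(k,i)^{-}$, and the even-$i$ formulas from the splittings $L(k,i)^{\sigma_r,1}\oplus L(k,i)^{\sigma_r,2}=L(k,i)^{\sigma_r,+}$, $L(k,i)^{\sigma_r,3}\oplus L(k,i)^{\sigma_r,4}=L(k,i)^{\sigma_r,-}$ together with Lemma~\ref{lem3.7}, Proposition~\ref{prop3.1} and Lemma~\ref{l3.8}. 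Your propagation through $L(k,i)^{r}=L(k,0)^{r}\boxtimes L(k,i)^{1}$, associativity, and (\ref{e1})--(\ref{e3}) organizes the same count cleanly; note only that your blanket claim that restriction ``gives injective maps on spaces of intertwiners, hence lower bounds on individual fusion coefficients'' is not literally true (a nonzero intertwining operator can vanish on one direct summand of the first slot, since its kernel there is a submodule), but your simple-current propagation step supplies exactly the symmetry that repairs this, so no harm results.

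Where you genuinely diverge is the $\sigma_3$ block, and there your route is both heavier than necessary and, as written, incomplete. The paper does not pass to $V_{\Z\gamma}^{+}\otimes K_0^{+}$ anywhere in this theorem: since Lemma~\ref{lem3.7} holds for the orbifold $L(k,0)^{\langle\sigma_3\rangle}$ as well, the $\sigma_3$ formulas follow purely formally from Lemma~\ref{l3.8}. For $i\in4\Z$ one has $L(k,i)^{1}=L(k,i)^{\sigma_3,1}$, $L(k,i)^{4}=L(k,i)^{\sigma_3,2}$, $L(k,i)^{2}=L(k,i)^{\sigma_3,4}$, $L(k,i)^{3}=L(k,i)^{\sigma_3,3}$, while for $i\in4\Z+2$ the labels permute the other way, and in both cases rewriting $\mathrm{sign}(i,j,l)^{\pm}$ in $\sigma_3$-labels yields exactly the conditions $j-l\in4\Z$ versus $j-l\in4\Z+2$, uniformly in $i$ modulo $4$; contrary to your closing remark, the $\mathrm{sign}(i,j,l)$ pattern does survive for $\sigma_3$, merely expressed in the $\sigma_3$-adapted labels, and no tracking of the $T_1/T_2$ twisted sectors or of the parafermion rules of Lemma~\ref{l3.18} is needed. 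The concrete defect in your plan is parity coverage: the embedding $V_{\Z\gamma}^{+}\otimes K_0^{+}\subseteq L(k,0)^{K}$ and Lemmas~\ref{l3.16}--\ref{l3.17} all require $k\in2\Z_{+}$ (for odd $k$ one must instead work with $\beta=2\gamma$ and $V_{\Z\beta}^{+}$, as the paper does in the proof of Theorem~\ref{thm3.11}), whereas the $\sigma_3$ formulas you are proving are asserted for all $k$ with $0\leq j<\frac{k}{2}$. As it stands your $\sigma_3$ argument leaves odd $k$ unproved; either import the $\beta=2\gamma$ device or, better, replace the whole detour by the label-translation argument above.
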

\begin{proof}  (\ref{e3.35})-(\ref{e3.36}) follow from Proposition \ref{prop3.1}, Lemma \ref{lem3.7}, and the fact that $L(k,i)^+\cong L(k,i)^-$, if $i\in 2\Z_++1$.  For the rest formulas, recall that for $i\in 2\Z_+$,  $r=1,2,3$, 
	$$
	L(k,i)^{\sigma_r,1}\oplus L(k,i)^{\sigma_r,2}=L(k,i)^{\sigma_r,+}, \quad 
L(k,i)^{\sigma_r,3}\oplus L(k,i)^{\sigma_r,4}=L(k,i)^{\sigma_r,-}.
	$$
Then these formulas follow from Lemma \ref{lem3.7}, Proposition \ref{prop3.1}, and Lemma \ref{l3.8}.
	\end{proof}
 For $i\in 2\Z_+$, we denote $\bar{i}=2$ if $i\in 4\Z+2$, and $\bar{i}=0$ if $i\in 4\Z$.
\begin{thm} \label{thm3.10}
	For $k\in 2\Z_+$, $i\in 2\Z_+$, $r=1,2,3$, 
\begin{eqnarray}
\begin{split}&L(k,i)^{\sigma_r,j}\boxtimes \overline{L(k,\frac{k}{2})}^{\sigma_r,1}\\&\\&
=\sum\limits_{\tiny{\begin{split}|i-\frac{k}{2}|\leq l\leq \frac{k}{2}-1 
		\\ i+\frac{k}{2}-l\in 4\Z\ \ \ \end{split}}}\overline{L(k,l)}^{\sigma_r, +}\oplus \sum\limits_{\tiny{\begin{split}|i-\frac{k}{2}|\leq l\leq \frac{k}{2}-1 
		\\ i+\frac{k}{2}-l\in 4\Z+2\ \ \ \end{split}}}\overline{L(k,l)}^{\sigma_r, -}\oplus 
\overline{L(k,\frac{k}{2})}^{\sigma_r,j+\bar{i}}, \ j=1,2, \label{eq105}
\end{split}
\end{eqnarray}
\begin{eqnarray}\begin{split}
&L(k,i)^{\sigma_r, j}\boxtimes \overline{L(k,\frac{k}{2})}^{\sigma_r,1}\\&\\&
=\sum\limits_{\tiny{\begin{split}|i-\frac{k}{2}|\leq l\leq \frac{k}{2}-1 
		\\ i+\frac{k}{2}-l\in 4\Z\ \ \ \end{split}}}\overline{L(k,l)}^{\sigma_r, -}\oplus \sum\limits_{\tiny{\begin{split}|i-\frac{k}{2}|\leq l\leq \frac{k}{2}-1 
		\\i+ \frac{k}{2}-l\in 4\Z+2\ \ \ \end{split}}}\overline{L(k,l)}^{\sigma_r, +}\oplus 
\overline{L(k,\frac{k}{2})}^{\sigma_r,j-\bar{i}}, \ j=3,4,\label{eq106}
\end{split}
\end{eqnarray}
$$
L(k,i)^{\sigma_r, j}\boxtimes \overline{L(k,\frac{k}{2})}^{\sigma_r,1}=L(k,i)^{\sigma_r, 1}\boxtimes \overline{L(k,\frac{k}{2})}^{\sigma_r,j}, \  j=1,2,3,4,
$$
$$
L(k,i)^{\sigma_r,j}\boxtimes \overline{L(k,\frac{k}{2})}^{\sigma_r,j}=L(k,i)^{\sigma_r,1}\boxtimes \overline{L(k,\frac{k}{2})}^{\sigma_r,1}, \  j=1,2,3,4,
$$
$$
L(k,i)^{\sigma_r, l}\boxtimes \overline{L(k,\frac{k}{2})}^{\sigma_r,s}=L(k,i)^{\sigma_r,t}\boxtimes \overline{L(k,\frac{k}{2})}^{\sigma_r,1}, \  \{l,s,t\}=\{2,3,4\}.
$$
\end{thm}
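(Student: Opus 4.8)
The plan is to isolate a single base case and deduce everything else from it by the Klein--group action. By the first identity of Lemma~\ref{l3.7} we have $L(k,i)^{\sigma_r,j}=L(k,0)^{\sigma_r,j}\boxtimes L(k,i)^{\sigma_r,1}$, and by \eqref{e3} we have $\overline{L(k,\frac{k}{2})}^{\sigma_r,j}=L(k,0)^{\sigma_r,j}\boxtimes\overline{L(k,\frac{k}{2})}^{\sigma_r,1}$. Since the four modules $L(k,0)^{\sigma_r,j}$ $(j=1,2,3,4)$ form a copy of $K$ under $\boxtimes$ (Lemma~\ref{l3.7}) and fusion is commutative and associative (Proposition~\ref{fusionsymm.}), commuting the factor $L(k,0)^{\sigma_r,j}$ from one tensor slot to the other and collapsing products $L(k,0)^{\sigma_r,l}\boxtimes L(k,0)^{\sigma_r,s}$ via Lemma~\ref{l3.7} immediately yields the last three (unlabelled) relations of the statement. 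Thus everything reduces to the single product
\begin{equation*}
L(k,i)^{\sigma_r,1}\boxtimes\overline{L(k,\tfrac{k}{2})}^{\sigma_r,1},\qquad i\in 2\Z_+ .
\end{equation*}
Moreover, once this base case is known, \eqref{eq105} for $j=2$ and \eqref{eq106} for $j=3,4$ follow by fusing it with $L(k,0)^{\sigma_r,j}$ and applying \eqref{e1}, \eqref{e2}, \eqref{e3} summand by summand: $L(k,0)^{\sigma_r,2}$ fixes each bulk term $\overline{L(k,l)}^{\sigma_r,\pm}$ by \eqref{e1} but permutes the half-level term by the Klein multiplication \eqref{e3}, producing the index $j+\bar i$; the elements $L(k,0)^{\sigma_r,3},L(k,0)^{\sigma_r,4}$ in addition interchange the two bulk signs by \eqref{e2}, which accounts for the $\pm\leftrightarrow\mp$ swap and the index $j-\bar i$ in \eqref{eq106}.

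To settle the base case I would pass to the conformal subalgebra $V_{\Z\gamma}^{+}\otimes K_0^{+}\subseteq L(k,0)^{K}$ and use the decompositions already assembled in Lemmas~\ref{l3.16} and \ref{l3.17}. By Lemma~\ref{l3.8} the untwisted factor may be taken in its $\sigma_2$-adapted form, so via \eqref{eq:3.1} its $V_{\Z\gamma}^{+}\otimes K_0^{+}$-content is controlled by the parafermion modules $M^{i,\cdot}$ together with the middle module $M^{i,\frac{i}{2}}$ exactly as recorded in \eqref{e3.33}--\eqref{e3.34}, and the $r$-dependence then resides entirely in the twisted factor $\overline{L(k,\frac{k}{2})}^{\sigma_r,1}$ (Lemma~\ref{l3.17}) and in the reassembly dictionary (Lemma~\ref{l3.16}). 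The fusion over $V_{\Z\gamma}^{+}\otimes K_0^{+}$ factors into a lattice part, evaluated by Abe's rules for $V_{\Z\gamma}^{+}$ and its $\sigma_2$-twisted modules $V_{\Z\gamma}^{T_a}$ \cite{Ab01} (as already used in Lemmas~\ref{lem01} and \ref{lem02}), and a parafermion part, evaluated by the $K_0^{+}$-fusion rules of \cite{JW2}: the untwisted diagonal rule \eqref{eq:5.31} when $r=1$, and the twisted rule of Lemma~\ref{l3.18} for the products $(M^{i,\frac{i}{2}})^{\pm}\boxtimes W(k,\frac{k}{2})^{\pm}$ and $(M^{i,\frac{i}{2}})^{\pm}\boxtimes\widetilde{W(k,\frac{k}{2})}^{\pm}$ when $r=2,3$. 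Because $L(k,i)^{\sigma_r,1}$ lies in the untwisted sector and $\overline{L(k,\frac{k}{2})}^{\sigma_r,1}$ in the $\sigma_r$-twisted sector, the selection rule $g_3=g_1g_2$ of \cite{DLM96-1} forces the product into the $\sigma_r$-twisted sector, so its only possible constituents are the $\overline{L(k,l)}^{\sigma_r,\pm}$ and the four half-level modules; reassembling the $V_{\Z\gamma}^{+}\otimes K_0^{+}$-pieces through Lemmas~\ref{l3.16}--\ref{l3.17} identifies each constituent, and Proposition~\ref{prop3.1} checks that the quantum dimensions of the two sides agree, pinning down the multiplicities.

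The main obstacle is the bookkeeping of signs rather than the existence of the decomposition. For each admissible $l<\frac{k}{2}$ the modules $\overline{L(k,l)}^{\sigma_r,+}$ and $\overline{L(k,l)}^{\sigma_r,-}$ have the same quantum dimension, so Proposition~\ref{prop3.1} alone cannot decide which sign occurs; this must be read off from the signs of the $V_{\Z\gamma}^{+}\otimes K_0^{+}$-summands produced by Abe's and the parafermion fusion rules and then transported back through the $r$- and $(i \bmod 4)$-dependent dictionary of Lemmas~\ref{l3.8}, \ref{l3.16} and \ref{l3.17}. The most delicate point is the half-level output summand: deciding whether it is built from $W(k,\frac{k}{2})^{\pm}$ or $\widetilde{W(k,\frac{k}{2})}^{\pm}$, and with which sign, is precisely what the corrected rules of Lemma~\ref{l3.18} supply, and it is the interaction of the two cases $k\in 4\Z$ and $k\in 4\Z+2$ there with the parity of $i$ that generates the shift $\bar i$ appearing in the index $j\pm\bar i$ of \eqref{eq105}--\eqref{eq106}.
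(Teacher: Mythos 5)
Your global skeleton---reduce the $j$-indices by Klein-group fusion with $L(k,0)^{\sigma_r,j}$ via Lemma~\ref{l3.7} and \eqref{e1}--\eqref{e3}, compute one base product by restricting to $V_{\Z\gamma}^+\otimes K_0^+$, and pin multiplicities with Proposition~\ref{prop3.1}---matches the paper's proof, and your derivation of the last three unlabelled relations is exactly the paper's. But there is a genuine gap in your treatment of the base case for $r=2,3$: you propose to compute $L(k,i)^{\sigma_r,1}\boxtimes\overline{L(k,\frac{k}{2})}^{\sigma_r,1}$ directly in the twisted sector, with the parafermion part evaluated by ``the twisted rule of Lemma~\ref{l3.18} for the products $(M^{i,\frac{i}{2}})^{\pm}\boxtimes W(k,\frac{k}{2})^{\pm}$ and $(M^{i,\frac{i}{2}})^{\pm}\boxtimes\widetilde{W(k,\frac{k}{2})}^{\pm}$.'' Lemma~\ref{l3.18} contains no such rules: it treats only $(M^{\frac{k}{2},0})^{\pm}\boxtimes W^{\pm}$, i.e.\ the single parafermion module attached to the half-level module, not $(M^{i,\frac{i}{2}})^{\pm}$ for general even $i$. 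The sign-refined rules you would need appear nowhere in this paper; the general products quoted from \cite{JW2} in the proof of Theorem~\ref{thm3.11} (\eqref{e44}, \eqref{eq:5.29.}, \eqref{eq:5.30.}) concern the non-split modules $M^{i,i'}$ and output only the sign-symmetric sums $W(k,l)^{+}+W(k,l)^{-}$, so they cannot separate the $+$ and $-$ bulk terms of \eqref{eq105}--\eqref{eq106}. And since $\overline{L(k,l)}^{\sigma_r,+}$ and $\overline{L(k,l)}^{\sigma_r,-}$ have equal quantum dimensions (as you note yourself), Proposition~\ref{prop3.1} cannot repair this: the sign bookkeeping \emph{is} the content of the theorem, and your cited inputs do not determine it for $r=2,3$. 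A smaller slip of the same kind occurs at $r=1$: you cite \eqref{eq:5.31}, the diagonal rule $(M^{i,\frac{i}{2}})^{+}\boxtimes(M^{j,\frac{j}{2}})^{+}$, whereas the product actually needed is $(M^{i,\frac{i}{2}})^{+}\boxtimes(M^{\frac{k}{2},0})^{\pm}$, which is \eqref{eq104}.

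The paper sidesteps twisted parafermion fusion entirely: it observes that it suffices to prove the theorem for $r=1$ (for fixed $r$ the statement involves only $\sigma_r$-data, and the three involutions play symmetric roles), and for $r=1$ the twisted module $\overline{L(k,\frac{k}{2})}^{\sigma_1}$ decomposes by Lemma~\ref{l3.1} into \emph{untwisted} pieces $V_{\mathbb{Z}\gamma-\frac{j}{k}\gamma}\otimes M^{\frac{k}{2},j}$. The base case then follows from the embeddings \eqref{eq101}--\eqref{eq103}, the untwisted rule \eqref{eq104}, and quantum dimensions; note that the shift $\bar i$ comes from the $i\bmod 4$ dependence in \eqref{eq101}--\eqref{eq102} (which $j$-component of $L(k,i)$ contains $V_{\Z\gamma}^{\pm}\otimes(M^{i,\frac{i}{2}})^{\pm}$), not from the $k\bmod 4$ dichotomy of Lemma~\ref{l3.18} that you invoke. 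The remaining $j$ are then generated by fusing with $L(k,0)^{3}$ and $L(k,0)^{4}$, exactly as in your bootstrapping step. To salvage your direct route for $r=2,3$ you would have to import from \cite{JW2} the sign-refined fusions of $(M^{i,\frac{i}{2}})^{\pm}$ with $W(k,\frac{k}{2})^{\pm}$ and $\widetilde{W(k,\frac{k}{2})}^{\pm}$, which this paper never states---or first justify the reduction to $r=1$, which is the step your proposal is missing.
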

\begin{proof} It is enough to prove the theorem for $r=1$.
Notice that for $i\in 4\Z+2$, 
	\begin{equation}\label{eq101}
	V_{\Z\gamma}^+\otimes (M^{i,\frac{i}{2}})^+\oplus V_{\Z\gamma}^-\otimes (M^{i,\frac{i}{2}})^-\subseteq L(k,i)^3,  \quad V_{\Z\gamma}^+\otimes (M^{i,\frac{i}{2}})^-\oplus V_{\Z\gamma}^-\otimes (M^{i,\frac{i}{2}})^+\subseteq L(k,i)^4.
	\end{equation}
If 	$i\in 4\Z$, then
\begin{equation}\label{eq102}
V_{\Z\gamma}^+\otimes (M^{i,\frac{i}{2}})^+\oplus V_{\Z\gamma}^-\otimes (M^{i,\frac{i}{2}})^-\subseteq L(k,i)^1,  \quad V_{\Z\gamma}^+\otimes (M^{i,\frac{i}{2}})^-\oplus V_{\Z\gamma}^-\otimes (M^{i,\frac{i}{2}})^+\subseteq L(k,i)^2.
\end{equation}
By (\ref{eq3.7}), we have 
$$
\overline{L(k,\frac{k}{2})}^{\sigma_1}=\oplus_{j=0}^{k-1}V_{\mathbb{Z}\gamma-\frac{j}{k}\gamma}\otimes M^{\frac{k}{2},j},
$$
and
\begin{equation}\label{eq103}
V_{\Z\gamma}^+\otimes (M^{\frac{k}{2},0})^+\oplus V_{\Z\gamma}^-\otimes (M^{\frac{k}{2},0})^-\subseteq \overline{L(k,\frac{k}{2})}^{\sigma_1,1}, \ 
V_{\Z\gamma}^+\otimes (M^{\frac{k}{2},0})^-\oplus V_{\Z\gamma}^-\otimes (M^{\frac{k}{2},0})^+\subseteq \overline{L(k,\frac{k}{2})}^{\sigma_1,2}.
\end{equation}
By Theorem 5.1 of \cite{JW2}, 
\begin{equation}\label{eq104}
(M^{i,\frac{i}{2}})^{+}\boxtimes (M^{\frac{k}{2},0})^{\pm}=\sum\limits_{\tiny{\begin{split}|\frac{k}{2}-i|\leq l<\frac{k}{2} \\  i+j+l\in 2\mathbb{Z} \ \ \ \\ i+j+l\leq 2k\ \ \ \end{split}}} M^{l,\overline{(\frac{2l-k}{4}})}+(M^{\frac{k}{2},0})^{\pm},
\end{equation}
where $\overline{a}$ means the residue of the integer $a$ modulo $k$ for $0\leq a\leq k$. Then by (\ref{eq101}), (\ref{eq103})-(\ref{eq104}) and Proposition \ref{prop3.1}, we have for $i\in 4\Z+2$, 
$$
L(k,i)^{j}\boxtimes \overline{L(k,\frac{k}{2})}^{\sigma_1,1}=\sum\limits_{\tiny{\begin{split}0\leq l\leq \frac{k}{2}-1 
		\\ \frac{k}{2}+l\in 2\Z\ \ \ \end{split}}}\overline{L(k,l)}^{\sigma_1, sign(i,\frac{k}{2},l)^+}\oplus \overline{L(k,\frac{k}{2})}^{\sigma_1,j-2}, \ j=3,4.
$$
Then
$$
\begin{array}{ll}
&
L(k,i)^{1}\boxtimes \overline{L(k,\frac{k}{2})}^{\sigma_1,1}
=L(k,0)^3\boxtimes L(k,i)^{3}\boxtimes \overline{L(k,\frac{k}{2})}^{\sigma_1,1}
\\&\\&=\sum\limits_{\tiny{\begin{split}0\leq l\leq \frac{k}{2}-1 
		\\ \frac{k}{2}+l\in 2\Z\ \ \ \end{split}}}L(k,0)^3\boxtimes \overline{L(k,l)}^{\sigma_1, sign(i,\frac{k}{2},l)^+}\oplus L(k,0)^3\boxtimes \overline{L(k,\frac{k}{2})}^{\sigma_1,1}\\&\\&
	=\sum\limits_{\tiny{\begin{split}0\leq l\leq \frac{k}{2}-1 
				\\ \frac{k}{2}+l\in 2\Z\ \ \ \end{split}}}\overline{L(k,l)}^{\sigma_1, sign(i,\frac{k}{2},l)^-}\oplus \overline{L(k,\frac{k}{2})}^{\sigma_1,3}, 
\end{array}	 
$$	
and
$$
\begin{array}{ll}
&
L(k,i)^{2}\boxtimes \overline{L(k,\frac{k}{2})}^{\sigma_1,1}
=L(k,0)^4\boxtimes L(k,i)^{3}\boxtimes \overline{L(k,\frac{k}{2})}^{\sigma_1,1}
\\&\\&=\sum\limits_{\tiny{\begin{split}0\leq l\leq \frac{k}{2}-1 
		\\ \frac{k}{2}+l\in 2\Z\ \ \ \end{split}}}L(k,0)^4\boxtimes \overline{L(k,l)}^{\sigma_1, sign(i,\frac{k}{2},l)^+}\oplus L(k,0)^4\boxtimes \overline{L(k,\frac{k}{2})}^{\sigma_1,1}\\&\\&
=\sum\limits_{\tiny{\begin{split}0\leq l\leq \frac{k}{2}-1 
		\\ \frac{k}{2}+l\in 2\Z\ \ \ \end{split}}}\overline{L(k,l)}^{\sigma_1, sign(i,\frac{k}{2},l)^-}\oplus \overline{L(k,\frac{k}{2})}^{\sigma_1,4}.
\end{array}	 
$$	
For $i\in 4\Z$, by (\ref{eq102}), (\ref{eq103})-(\ref{eq104}) and Proposition \ref{prop3.1}, we have
$$
	L(k,i)^{j}\boxtimes \overline{L(k,\frac{k}{2})}^{\sigma_1,1}=\sum\limits_{\tiny{\begin{split}0\leq l\leq \frac{k}{2}-1 
			\\ \frac{k}{2}+l\in 2\Z\ \ \ \end{split}}}\overline{L(k,l)}^{\sigma_1, sign(i,\frac{k}{2},l)^+}\oplus \overline{L(k,\frac{k}{2})}^{\sigma_1,j}, \quad j=1,2.
$$
Then 
$$
L(k,i)^{j}\boxtimes \overline{L(k,\frac{k}{2})}^{\sigma_1,1}=\sum\limits_{\tiny{\begin{split}0\leq l\leq \frac{k}{2}-1 
		\\ \frac{k}{2}+l\in 2\Z\ \ \ \end{split}}}\overline{L(k,l)}^{\sigma_1, sign(i,\frac{k}{2},l)^-}\oplus \overline{L(k,\frac{k}{2})}^{\sigma_1,j}, \quad j=3,4.
$$
We prove (\ref{eq105}) and (\ref{eq106}). The last three relations follow from (\ref{e1})-(\ref{e3}) and Lemma \ref{l3.7}.
	\end{proof}

\begin{thm}\label{thm3.11}
	For $0\leq i,j<\frac{k}{2}$,  $a,b\in\{+,-\}$, and  $\{r,s,t\}=\{1,2,3\}$, 
\begin{equation}\label{e3.37}
\overline{L(k,i)}^{\sigma_r,a}\boxtimes \overline{L(k,j)}^{\sigma_s,b}=\sum\limits_{\tiny{\begin{split}|i-j|\leqslant l\leqslant i+j \\  i+j+l\in 2\mathbb{Z} \ \ \ \\ i+j+l\leqslant 2k\ \ \ \end{split}}} (\overline{L(k,l)}^{\sigma_t,+}+\overline{L(k,l)}^{\sigma_t,-}).
\end{equation}
For $k\in 2\Z$, $i\in 2\Z+1$, $i\neq \frac{k}{2}$, $j=1,2,3,4,$   $\{r,s,t\}=\{1,2,3\}$,
\begin{equation}\label{e3.38}
\overline{L(k,i)}^{\sigma_r,\pm}\boxtimes \overline{L(k,\frac{k}{2})}^{\sigma_s,j}=\sum\limits_{\tiny{\begin{split}0\leqslant l\leqslant \frac{k}{2}-1 \\  i+\frac{k}{2}+l\in 2\mathbb{Z} \ \ \ \end{split}}} (\overline{L(k,l)}^{\sigma_t}+\overline{L(k,l)}^{\sigma_t,-}).
\end{equation}
For $k\in 2\Z$, $i\in 2\Z$, $i\neq \frac{k}{2}$,	
\begin{equation}\label{e3.45}
\overline{L(k,i)}^{\sigma_1,+}\boxtimes \overline{L(k,\frac{k}{2})}^{\sigma_2,1}=\sum\limits_{\tiny{\begin{split}0\leqslant l\leqslant \frac{k}{2}-1 \\   i+\frac{k}{2}+l\in 2\mathbb{Z} \ \ \ \end{split}}} (\overline{L(k,l)}^{\sigma_3,+}+\overline{L(k,l)}^{\sigma_3,-})\oplus (\overline{L(k,\frac{k}{2})}^{\sigma_3, 1}\oplus\overline{L(k,\frac{k}{2})}^{\sigma_3, 4}), 
\end{equation}
\begin{equation}\label{e3.46}
\overline{L(k,i)}^{\sigma_1,-}\boxtimes \overline{L(k,\frac{k}{2})}^{\sigma_2,1}=\sum\limits_{\tiny{\begin{split}0\leqslant l\leqslant \frac{k}{2}-1 \\   i+\frac{k}{2}+l\in 2\mathbb{Z} \ \ \ \end{split}}} (\overline{L(k,l)}^{\sigma_3}+\overline{L(k,l)}^{\sigma_3,-})\oplus (\overline{L(k,\frac{k}{2})}^{\sigma_3, 2}\oplus\overline{L(k,\frac{k}{2})}^{\sigma_3, 3}), 
\end{equation}	
\vskip 0.2cm
\begin{equation}\label{e3.47}
\overline{L(k,i)}^{\sigma_1,\pm}\boxtimes \overline{L(k,\frac{k}{2})}^{\sigma_2,2}
=\overline{L(k,i)}^{\sigma_1,\mp}\boxtimes \overline{L(k,\frac{k}{2})}^{\sigma_2,1}, 
\end{equation}\begin{equation}\label{e3.64}
\overline{L(k,i)}^{\sigma_1,\pm}\boxtimes \overline{L(k,\frac{k}{2})}^{\sigma_2,3}
=\overline{L(k,i)}^{\sigma_1,\pm}\boxtimes \overline{L(k,\frac{k}{2})}^{\sigma_2,1},
\end{equation}
\begin{equation}\label{e3.48}
\overline{L(k,i)}^{\sigma_1,\pm}\boxtimes \overline{L(k,\frac{k}{2})}^{\sigma_2,4}
=\overline{L(k,i)}^{\sigma_1,\mp}\boxtimes \overline{L(k,\frac{k}{2})}^{\sigma_2,1}.
\end{equation}
	\end{thm}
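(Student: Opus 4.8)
\emph{Proof strategy.} The plan is to prove every relation by restricting to the conformal subalgebra $V_{\Z\gamma}^+\otimes K_0^+\subseteq L(k,0)^K$ and turning each cross--twist fusion product into a \emph{product} of fusion rules for the lattice orbifold $V_{\Z\gamma}^+$ and the parafermion orbifold $K_0^+$. Since fusion over a tensor product vertex operator algebra factors through the two tensor factors, once the modules $\overline{L(k,i)}^{\sigma_r,\pm}$ are replaced by their $V_{\Z\gamma}^+\otimes K_0^+$-decompositions (Lemma~\ref{l3.16} and Lemma~\ref{l3.17}), each summand of the fusion product is a tensor product of a $V_{\Z\gamma}^+$-module (produced by the rules of \cite{Ab01}) with a parafermion module (produced by the rules of \cite{JW2} and Lemma~\ref{l3.18}). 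I then reassemble these tensor products into irreducible $L(k,0)^K$-modules by reading the same decompositions backwards. Throughout, the group law $\sigma_t=\sigma_r\sigma_s$ together with the constraint that a nonzero intertwining operator forces $g_3=g_1g_2$ \cite{DLM96-1} guarantees that fusing a $\sigma_r$-twisted with a $\sigma_s$-twisted module produces only $\sigma_t$-twisted modules, which is why the right-hand sides carry the single label $\sigma_t$ with $\{r,s,t\}=\{1,2,3\}$.

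\emph{The generic relations \eqref{e3.37} and \eqref{e3.38}.} Using the commutativity of $\boxtimes$ (Proposition~\ref{fusionsymm.}) I reduce to $r<s$, leaving three triples. On the lattice factor there are two regimes governed by \cite{Ab01}: if exactly one of $\sigma_r,\sigma_s$ is $\sigma_1$, one factor is an \emph{untwisted} coset module and the other a $T_a$-twisted module, whose fusion is again $T_{a'}$-twisted (the computation already used in the proof of Lemma~\ref{lem01}); if $\{r,s\}=\{2,3\}$, both factors are $T_a$-twisted and their fusion is a sum of untwisted coset modules, matching $\sigma_1=\sigma_2\sigma_3$. In every case the twist class is $\sigma_t$. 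The parafermion factor contributes exactly the truncated Clebsch--Gordan range $|i-j|\le l\le i+j$, $i+j+l\le 2k$, $i+j+l\in 2\Z$. To fix the multiplicities and rule out extra terms I invoke Proposition~\ref{prop3.1}: because $\mathrm{qdim}$ is multiplicative under $\boxtimes$, the quantum dimension of the left-hand side equals $4\sum_l\frac{\sin\frac{(l+1)\pi}{k+2}}{\sin\frac{\pi}{k+2}}$, and this forces each of $\overline{L(k,l)}^{\sigma_t,+}$ and $\overline{L(k,l)}^{\sigma_t,-}$ to occur exactly once, independently of the input signs $a,b$. Relation~\eqref{e3.38} is identical, the one new feature being that when $i$ is odd the index $l=\frac{k}{2}$ fails the parity requirement $i+\frac{k}{2}+l\in 2\Z$ and so is absent from the right-hand side, leaving the clean truncation $l\le\frac{k}{2}-1$.

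\emph{The fixed--point relations \eqref{e3.45}--\eqref{e3.48}.} These are the cases $i\in 2\Z$, where $l=\frac{k}{2}$ now passes the parity test and the fixed--point module, split into the four pieces $\overline{L(k,\frac{k}{2})}^{\sigma_3,j}$ of equal quantum dimension $1/\sin\frac{\pi}{k+2}$, genuinely appears. For \eqref{e3.45} and \eqref{e3.46} I would feed the refined decompositions of Lemma~\ref{l3.17} into the factored fusion and track separately the sign--graded lattice pieces $V_{\Z\gamma}^{T_1,\pm}$ and the parafermion pieces $W(k,\frac{k}{2})^{\pm}$, $\widetilde{W(k,\frac{k}{2})}^{\pm}$; the parafermion bookkeeping here is precisely the corrected content of Lemma~\ref{l3.18}, which records whether fusing with $(M^{\frac{k}{2},0})^{\pm}$ preserves or flips the superscript and contributes the extra diagonal term in the $4\Z$ case. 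Comparing the surviving $V_{\Z\gamma}^{T_1,\pm}\otimes(\cdots)^{\pm}$ summands with the list in Lemma~\ref{l3.17} pins down exactly which two of the four pieces occur, namely $\{1,4\}$ for the $+$ input and $\{2,3\}$ for the $-$ input. The remaining identities \eqref{e3.47}, \eqref{e3.64} and \eqref{e3.48} then reduce to \eqref{e3.45}: writing $\overline{L(k,\frac{k}{2})}^{\sigma_2,j}=L(k,0)^{\sigma_2,j}\boxtimes\overline{L(k,\frac{k}{2})}^{\sigma_2,1}$ by \eqref{e3}, I move the simple current $L(k,0)^{\sigma_2,j}$ onto the first factor, where by Lemma~\ref{l3.8} it is identified with one of $L(k,0)^{\sigma_1,2},L(k,0)^{\sigma_1,3},L(k,0)^{\sigma_1,4}$ and hence, via \eqref{e1}--\eqref{e2}, either fixes or flips the sign of $\overline{L(k,i)}^{\sigma_1,\pm}$; associativity of $\boxtimes$ finishes the reduction.

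\emph{Main obstacle.} The crux is the sign--tracking at the fixed point in \eqref{e3.45}--\eqref{e3.46}. Quantum dimensions are blind to the distinction among the four modules $\overline{L(k,\frac{k}{2})}^{\sigma_t,j}$ because they share the value $1/\sin\frac{\pi}{k+2}$, so which two appear can only be read off from the explicit lowest--weight generators and from the exact $\pm$--refined fusion rules of the lattice twisted sectors \cite{Ab01} and of the $\widetilde{W(k,\frac{k}{2})}$--modules (Lemma~\ref{l3.18}). Setting up the correct pairing of lattice signs with parafermion signs, and verifying that it is consistent across the two residue classes $k\in 4\Z$ and $k\in 4\Z+2$, is the technical heart of the argument; everything else is driven by the multiplicativity of quantum dimension and the group law on the twist classes.
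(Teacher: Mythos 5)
Your overall route---decompose everything over the conformal subalgebra $V_{\Z\gamma}^+\otimes K_0^+$ via Lemmas \ref{l3.16} and \ref{l3.17}, fuse factorwise using the lattice-orbifold rules of \cite{Ab01} and the parafermion rules of \cite{JW2}, then close the containments with quantum dimensions and handle (\ref{e3.47})--(\ref{e3.48}) by simple-current shuffling through (\ref{e1})--(\ref{e3}) and Lemma \ref{l3.8}---is exactly the paper's. But there is one genuine gap: (\ref{e3.37}) is asserted for \emph{all} $k$, including odd $k$, and for odd $k$ your basic premise fails. Since $e^{\gamma}=\frac{1}{k!}e(-1)^k\1$ and $e^{-\gamma}=\frac{1}{k!}f(-1)^k\1$ are $\sigma_1$-odd when $k$ is odd, one has $V_{\Z\gamma}^+\otimes K_0^+\not\subseteq L(k,0)^K$, and Lemmas \ref{l3.16} and \ref{l3.17} are stated only for $k\in 2\Z_+$, so none of your decompositions exist in that case. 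The paper patches this by passing to the sublattice $\beta=2\gamma$, noting $V_{\Z\beta}^+\otimes K_0\subseteq L(k,0)^K$ for odd $k$ and identifying enough of the $V_{\Z\beta}^+\otimes K_0^+$-structure of $\overline{L(k,0)}^{\sigma_r,\pm}$ to run the same containment-plus-quantum-dimension argument; it also streamlines the even case by first proving sign-independence in $a,b$ directly (fusing with $L(k,0)^2=L(k,0)^{\sigma_2,3}$) and reducing general $i$ to the base case $i=0$ via (\ref{e3.35}) and associativity, rather than computing all pairs $(i,j)$ at the subalgebra level as you propose.

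Two smaller points. First, your claim that multiplicativity of $\mathrm{qdim}$ \emph{forces} each of $\overline{L(k,l)}^{\sigma_t,+}$ and $\overline{L(k,l)}^{\sigma_t,-}$ to occur exactly once overstates what quantum dimensions can do: by Proposition \ref{prop3.1} the two signs have equal quantum dimension, so $\mathrm{qdim}$ fixes only the total, and a priori $2\,\overline{L(k,l)}^{\sigma_t,+}$ would have the same quantum dimension; you must first exhibit both sign-pieces inside the product at the subalgebra level (your factorwise computation does supply this, and you concede the blindness of $\mathrm{qdim}$ for the fixed-point modules in your last paragraph, but the same caveat applies in the generic case (\ref{e3.37})--(\ref{e3.38})). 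Second, for (\ref{e3.45})--(\ref{e3.46}) the parafermion input you cite, Lemma \ref{l3.18}, covers only $(M^{\frac{k}{2},0})^{\pm}\boxtimes W^{\pm}$, i.e.\ the case relevant to Theorems \ref{t3.16} and \ref{t3.17}; here the first factor $\overline{L(k,i)}^{\sigma_1,\pm}$ contributes components $M^{i,i'}$ with $i\ne\frac{k}{2}$, so the rules actually needed are $M^{i,i'}\boxtimes W(k,\frac{k}{2})^{\pm}$ and $M^{i,i'}\boxtimes\widetilde{W(k,\frac{k}{2})}^{\pm}$ from Theorem 5.3 of \cite{JW2}, quoted in the paper as (\ref{eq:5.29.}) and (\ref{eq:5.30.}). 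With that substitution, and the odd-$k$ repair above, your sign-tracking scheme coincides with the paper's proof.
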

\begin{proof}  We first prove (\ref{e3.37}). We may assume that  $r=1$, $s=2$, $t=3$. 
	Notice that by Lemma \ref{l3.7} and Theorem \ref{thm3.9},
$$
\begin{array}{ll}
&\overline{L(k,i)}^{\sigma_1,+}\boxtimes \overline{L(k,j)}^{\sigma_2,+}=L(k,0)^2\boxtimes \overline{L(k,i)}^{\sigma_1,+}\boxtimes \overline{L(k,j)}^{\sigma_2,+}
\\&\\&
=L(k,0)^{\sigma_2,3}\boxtimes \overline{L(k,i)}^{\sigma_1,+}\boxtimes \overline{L(k,j)}^{\sigma_2,+}\\&\\&
=\overline{L(k,i)}^{\sigma_1,+}\boxtimes \overline{L(k,j)}^{\sigma_2,-}.
\end{array}
$$	
Similarly
$$
\overline{L(k,i)}^{\sigma_1, -}\boxtimes \overline{L(k,j)}^{\sigma_2,\pm}
=\overline{L(k,i)}^{\sigma_1,+}\boxtimes \overline{L(k,j)}^{\sigma_2,+}.
$$	
On the other hand, by Propostion \ref{prop3.1} we have
$$
	{\rm dim}_q\overline{L(k,i)}^{\sigma_r,+}={\rm dim}_q\overline{L(k,i)}^{\sigma_r,-}=2\dfrac{\sin\frac{(i+1)\pi}{k+2}}{\sin \frac{\pi}{k+2}}, \ i\neq\frac{k}{2}, \ r=1,2,3.
$$	
Then 
$$
	{\rm dim}_q\overline{L(k,0)}^{\sigma_r,\pm}=2, \ r=1,2,3,
$$
and 
$$
{\rm dim}_q(\overline{L(k,0)}^{\sigma_1, +}\boxtimes \overline{L(k,j)}^{\sigma_2,\pm})=2\dfrac{\sin\frac{(j+1)\pi}{k+2}}{\sin \frac{\pi}{k+2}}.
$$
If $k\in 2\Z_+$, then by Lemma \ref{l3.16} and Lemma \ref{l3.17}, we have 
$$
\overline{L(k,0)}^{\sigma_1, +}\boxtimes \overline{L(k,j)}^{\sigma_2,+}
=\overline{L(k,j)}^{\sigma_3, +}\oplus \overline{L(k,j)}^{\sigma_3, -}.
$$
If $k\in 2\Z_++1$,  let $\beta=2\gamma$. Then  $V_{\Z\beta}^+\otimes K_0\subseteq L(k,0)^{K}$,  and as $V_{\Z\beta}^+\otimes K_0^+$-modules, 
$$
V_{\Z\beta+\frac{1}{8}\beta}\otimes K_0\subseteq \overline{L(k,0)}^{\sigma_1, +}, $$
$$
V_{\Z\beta}^{T_1,+}\otimes W(k,j)^+ \ {\rm or} \ V_{\Z\beta}^{T_2,+}\otimes W(k,j)^+\subseteq \overline{L(k,0)}^{\sigma_2, +},
$$
$$
V_{\Z\beta}^{T_1,+}\otimes W(k,j)^+ \ {\rm or} \ V_{\Z\beta}^{T_2,+}\otimes W(k,j)^+\subseteq \overline{L(k,0)}^{\sigma_3, +}.
$$
Then we also have 
$$
\overline{L(k,0)}^{\sigma_1, +}\boxtimes \overline{L(k,j)}^{\sigma_2,\pm}
=\overline{L(k,0)}^{\sigma_1, -}\boxtimes \overline{L(k,j)}^{\sigma_2,\pm}=\overline{L(k,j)}^{\sigma_3, +}\oplus \overline{L(k,j)}^{\sigma_3, -}.
$$
By considering $L(k,i)^+\boxtimes \overline{L(k,0)}^{\sigma_1, +}\boxtimes \overline{L(k,j)}^{\sigma_2,\pm}$ and using (\ref{e3.35}), we deduce that for $0\leq i,j<\frac{k}{2}$, 
$$
\overline{L(k,i)}^{\sigma_1,a}\boxtimes \overline{L(k,j)}^{\sigma_2,b}=\sum\limits_{\tiny{\begin{split}|i-j|\leqslant l\leqslant i+j \\  i+j+l\in 2\mathbb{Z} \ \ \ \\ i+j+l\leqslant 2k\ \ \ \end{split}}}( \overline{L(k,l)}^{\sigma_3,+}+\overline{L(k,l)}^{\sigma_3,-}), \ a,b\in\{+,-\}.
$$

\noindent
For (\ref{e3.38}),  it is enough to prove that
\begin{equation}\label{e3.40}\overline{L(k,i)}^{\sigma_1,\pm}\boxtimes \overline{L(k,\frac{k}{2})}^{\sigma_2,j}=\sum\limits_{\tiny{\begin{split}0\leqslant l\leqslant \frac{k}{2}-1 \\  i+\frac{k}{2}+l\in 2\mathbb{Z} \ \ \ \end{split}}} (\overline{L(k,l)}^{\sigma_3, +}\oplus\overline{L(k,l)}^{\sigma_3, -}), \quad j=1,2,3,4.
\end{equation}	
	Recall from  Lemma \ref{l3.16} that for  $k\in 2\Z$, $ 0\leq i\leq k$,
	$$
\overline{L(k,i)}^{\sigma_1,+}=\bigoplus_{j\in 2\Z, 0\leq j\leq k-1}V_{\mathbb{Z}\gamma+(\frac{k-2i}{4k}-\frac{j}{k})\gamma}\otimes M^{i,j}, $$$$ \overline{L(k,i)}^{\sigma_1,-}=\bigoplus_{j\in 2\Z+1, 0\leq j\leq k-1}V_{\mathbb{Z}\gamma+(\frac{k-2i}{4k}-\frac{j}{k})\gamma}\otimes M^{i,j}.
$$
By the fusion rules for $V_{\Z\gamma}^+$ \cite{Ab01}, we have
$$
V_{\mathbb{Z}\gamma+(\frac{k-2i}{4k}-\frac{j}{k})\gamma}\boxtimes V^{T_r,\pm}=V^{T_r,+}\oplus V^{T_r,-},  \quad {\rm if} \ \frac{k}{2}-i \ {\rm is \ even}, \quad r=1,2.
$$
$$
V_{\mathbb{Z}\gamma+(\frac{k-2i}{4k}-\frac{j}{k})\gamma}\boxtimes V^{T_1,\pm}=V^{T_2,+}\oplus V^{T_2,-},  \quad V_{\mathbb{Z}\gamma+(\frac{k-2i}{4k}-\frac{j}{k})\gamma}\boxtimes V^{T_2,\pm}=V^{T_1,+}\oplus V^{T_1,-},  \  {\rm if} \ \frac{k}{2}-i \ {\rm is \ odd}.
$$
From  \cite{JW2} , as $K_0^{\sigma_2}$-modules,  
for $k\in 2\Z$, $i\in 2\mathbb{Z}+1$, 
\begin{equation}\label{e44}
	\begin{split}
&M^{i,i^{'}}\boxtimes W^{+}=M^{i,i^{'}}\boxtimes W^{-}=\sum\limits_{\tiny{\begin{split}|i-\frac{k}{2}|\leq l<\frac{k}{2} \\ i+\frac{k}{2}+l\in 2\mathbb{Z} \ \  \\ i+l\leq \frac{3k}{2}\ \ \ \end{split}}}  \Big(W(k,l)^{+}+W(k,l)^{-}\Big),
\end{split}
\end{equation}
where $W=W(k,\frac{k}{2})$ or $W=\widetilde{W(k,\frac{k}{2})}$. Then by Lemma \ref{l3.16}, for $i\in 2\Z+1$, 
$$\sum\limits_{\tiny{\begin{split}0\leqslant l\leqslant \frac{k}{2}-1 \\  i+j+l\in 2\mathbb{Z} \ \ \ \end{split}}} (\overline{L(k,l)}^{\sigma_3, +}\oplus\overline{L(k,l)}^{\sigma_3, -})\subseteq 
\overline{L(k,i)}^{\sigma_1,\pm}\boxtimes \overline{L(k,\frac{k}{2})}^{\sigma_2,j}.$$
Since the quantum dimensions of both sides are equal,  we have (\ref{e3.40}).  For (\ref{e3.45})-(\ref{e3.46}), by Theorem 5.3 of \cite{JW2},  
if $k\in 2\Z$, $i\in 2\mathbb{Z}$, $i^{'}\in 2\mathbb{Z}+1$, 
\begin{eqnarray}\begin{split}
&M^{i,i^{'}}\boxtimes W^{+}=M^{i,i^{'}}\boxtimes W^{-}\\
&=\sum\limits_{\tiny{\begin{split}|i-\frac{k}{2}|\leq l< \frac{k}{2} \\  i+\frac{k}{2}+l\in 2\mathbb{Z} \ \  \\ i+l\leq \frac{3k}{2}\  \ \ \end{split}}}  \Big(W(k,l)^{+}+W(k,l)^{-}\Big)+\Big(\bar{W}^{+}+\bar{W}^{-}\Big),
\label{eq:5.29.}\end{split}
\end{eqnarray}
where if $W=W(k,\frac{k}{2})$, then $\bar{W}=\widetilde{W(k,\frac{k}{2})}$, and if $W=\widetilde{W(k,\frac{k}{2})}$, then $\bar{W}=W(k,\frac{k}{2})$.

If $k\in 2\Z$, $i\in 2\mathbb{Z}$, $i^{'}\in 2\mathbb{Z}$, we have
\begin{eqnarray}\begin{split}
&M^{i,i^{'}}\boxtimes W^{+}=M^{i,i^{'}}\boxtimes W^{-}\\
&=\sum\limits_{\tiny{\begin{split}|i-\frac{k}{2}|\leq l< \frac{k}{2} \\  i+\frac{k}{2}+l\in 2\mathbb{Z} \ \  \\ i+l\leq \frac{3k}{2} \ \ \
		\end{split}}} \Big(W(k,l)^{+}+W(k,l)^{-}\Big)+\Big(W^{+}+W^{-}\Big),
\label{eq:5.30.}\end{split}
\end{eqnarray}
where $W=W(k,\frac{k}{2})$ or $W=\widetilde{W(k,\frac{k}{2})}$. Then by (\ref{e3.58}), (\ref{e3.60}), and (\ref{e3.41})-(\ref{e3.43}), we have for $i\in 2\Z$, $i\neq \frac{k}{2}$, 
$$
\sum\limits_{\tiny{\begin{split}0\leqslant l\leqslant \frac{k}{2}-1 \\   i+\frac{k}{2}+l\in 2\mathbb{Z} \ \ \ \end{split}}} \overline{L(k,l)}^{\sigma_3}\oplus (\overline{L(k,\frac{k}{2})}^{\sigma_3, 1}\oplus\overline{L(k,\frac{k}{2})}^{\sigma_3, 4})\subseteq \overline{L(k,i)}^{\sigma_1,+}\boxtimes \overline{L(k,\frac{k}{2})}^{\sigma_2,1},
$$
and 
$$
\sum\limits_{\tiny{\begin{split}0\leqslant l\leqslant \frac{k}{2}-1 \\   i+\frac{k}{2}+l\in 2\mathbb{Z} \ \ \ \end{split}}} \overline{L(k,l)}^{\sigma_3}\oplus (\overline{L(k,\frac{k}{2})}^{\sigma_3, 2}\oplus\overline{L(k,\frac{k}{2})}^{\sigma_3, 3})\subseteq \overline{L(k,i)}^{\sigma_1,-}\boxtimes \overline{L(k,\frac{k}{2})}^{\sigma_2,1}.
$$
Since the  quantum dimensions of both sides of the above two relations are equal,
 we get (\ref{e3.45}) and (\ref{e3.46}). (\ref{e3.47})-(\ref{e3.48}) follow from Lemma \ref{l3.8} and (\ref{e1})-(\ref{e3}).

\end{proof}
\begin{thm}\label{t3.16}  For  $k\in 4\Z_++2$, we have
\begin{eqnarray}\begin{split}
&\overline{L(k,\frac{k}{2})}^{\sigma_1, i}\boxtimes \overline{L(k,\frac{k}{2})}^{\sigma_2, j}=\overline{L(k,\frac{k}{2})}^{\sigma_1, r}\boxtimes \overline{L(k,\frac{k}{2})}^{\sigma_2, s} 
 \\
&
=\bigoplus_{0\leq l<\frac{k}{2}, l\in 2\Z}\overline{L(k,l)}^{\sigma_3, +}, \quad i,j=1,4, \ r,s=2,3; \label{e3.72}\end{split}
\end{eqnarray}
\vskip 0.1cm
\begin{eqnarray}\begin{split}
&\overline{L(k,\frac{k}{2})}^{\sigma_1, i}\boxtimes \overline{L(k,\frac{k}{2})}^{\sigma_2, j}=\overline{L(k,\frac{k}{2})}^{\sigma_1, j}\boxtimes \overline{L(k,\frac{k}{2})}^{\sigma_2, i}\\&
=\bigoplus_{0\leq l<\frac{k}{2}, l\in 2\Z}\overline{L(k,l)}^{\sigma_3, -}, \quad i=1,4, \ j=2,3; \label{e3.73}
\end{split}
\end{eqnarray}
\vskip 0.1cm
\begin{eqnarray}\begin{split}
&\overline{L(k,\frac{k}{2})}^{\sigma_1, i}\boxtimes \overline{L(k,\frac{k}{2})}^{\sigma_3, j}=\overline{L(k,\frac{k}{2})}^{\sigma_1, r}\boxtimes \overline{L(k,\frac{k}{2})}^{\sigma_3, s} 
\\
&
=\bigoplus_{0\leq l<\frac{k}{2}, l\in 2\Z}\overline{L(k,l)}^{\sigma_2, +}, \quad i,j=1,3, \ r,s=2,4; \  \label{e4}\end{split}
\end{eqnarray}
\vskip 0.1cm
\begin{eqnarray}\begin{split}
&\overline{L(k,\frac{k}{2})}^{\sigma_1, i}\boxtimes \overline{L(k,\frac{k}{2})}^{\sigma_3, j}=\overline{L(k,\frac{k}{2})}^{\sigma_1, j}\boxtimes \overline{L(k,\frac{k}{2})}^{\sigma_3, i} 
\\
&
=\bigoplus_{0\leq l<\frac{k}{2}, l\in 2\Z}\overline{L(k,l)}^{\sigma_2, -}, \quad i=2,4,  \ j=1,3; \  \label{e5}\end{split}
\end{eqnarray}
\vskip 0.1cm
\begin{eqnarray}\begin{split}
&\overline{L(k,\frac{k}{2})}^{\sigma_2, i}\boxtimes \overline{L(k,\frac{k}{2})}^{\sigma_3, j}=\overline{L(k,\frac{k}{2})}^{\sigma_2, r}\boxtimes \overline{L(k,\frac{k}{2})}^{\sigma_3, s} 
\\
&
=\bigoplus_{0\leq l<\frac{k}{2}, l\in 2\Z}\overline{L(k,l)}^{\sigma_1, +}, \quad i=1,3,  \ j=1,4; \ r=2,4, \ s=2,3; \label{e5}\end{split}
\end{eqnarray}
\vskip 0.1cm
\begin{eqnarray}\begin{split}
&\overline{L(k,\frac{k}{2})}^{\sigma_2, i}\boxtimes \overline{L(k,\frac{k}{2})}^{\sigma_3, j}=\overline{L(k,\frac{k}{2})}^{\sigma_2, r}\boxtimes \overline{L(k,\frac{k}{2})}^{\sigma_3, s} 
\\
&
=\bigoplus_{0\leq l<\frac{k}{2}, l\in 2\Z}\overline{L(k,l)}^{\sigma_1, -}, \quad i=1,3,  \ j=2,3; \ r=2,4, \ s=1,4; \label{e6}\end{split}
\end{eqnarray}
\end{thm}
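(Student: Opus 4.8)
The plan is to reduce every identity to a single ``base'' fusion product in each of the three unordered pairings $\{\sigma_1,\sigma_2\}$, $\{\sigma_1,\sigma_3\}$, $\{\sigma_2,\sigma_3\}$, and then to propagate to the remaining index choices by the simple--current calculus. By \eqref{e3} the vacuum modules $L(k,0)^{\sigma_r,m}$ permute the four pieces $\overline{L(k,\frac{k}{2})}^{\sigma_r,j}$, so that for instance
\[
\overline{L(k,\tfrac{k}{2})}^{\sigma_1,a}\boxtimes\overline{L(k,\tfrac{k}{2})}^{\sigma_2,b}
=\big(L(k,0)^{\sigma_1,a}\boxtimes L(k,0)^{\sigma_2,b}\big)\boxtimes\big(\overline{L(k,\tfrac{k}{2})}^{\sigma_1,1}\boxtimes\overline{L(k,\tfrac{k}{2})}^{\sigma_2,1}\big).
\]
Since $L(k,0)^{\sigma_1,a}\boxtimes L(k,0)^{\sigma_2,b}$ is again one of the four vacuum simple currents (Lemma \ref{l3.7} together with the relabelling of Lemma \ref{l3.8} at $i=0$), and these act on the $\sigma_3$-twisted modules through \eqref{e1}--\eqref{e2}, every entry of \eqref{e3.72}--\eqref{e3.73} follows from the single base product $\overline{L(k,\frac{k}{2})}^{\sigma_1,1}\boxtimes\overline{L(k,\frac{k}{2})}^{\sigma_2,1}$, and analogously for the other two pairings. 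So it suffices to establish the three base identities and to check that the $\pm$ bookkeeping reproduces the stated index patterns.

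For the base cases of $\{\sigma_1,\sigma_2\}$ and $\{\sigma_1,\sigma_3\}$ I would argue by containment plus quantum dimension. Decomposing the $\sigma_1$-factor via Lemma \ref{l3.1} and \eqref{eq103} gives $V_{\Z\gamma}^+\otimes(M^{\frac{k}{2},0})^+\subseteq\overline{L(k,\frac{k}{2})}^{\sigma_1,1}$, while \eqref{e3.60} (resp.\ \eqref{e3.41}) gives $V^{T_2,+}\otimes W(k,\frac{k}{2})^+\subseteq\overline{L(k,\frac{k}{2})}^{\sigma_2,1}$ (resp.\ $V^{T_1,+}\otimes W(k,\frac{k}{2})^+\subseteq\overline{L(k,\frac{k}{2})}^{\sigma_3,1}$). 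Fusing these sub-pieces as $V_{\Z\gamma}^+\otimes K_0^+$-modules the lattice factor is trivial, $V_{\Z\gamma}^+\boxtimes V^{T_2,+}=V^{T_2,+}$, while the parafermion factor is governed by \eqref{e11}, namely $(M^{\frac{k}{2},0})^+\boxtimes W(k,\frac{k}{2})^+=\sum_{0\le l<k/2,\,l\in2\Z}W(k,l)^+$. Reading off Lemma \ref{l3.16}, each summand $V^{T_2,+}\otimes W(k,l)^+$ (resp.\ $V^{T_1,+}\otimes W(k,l)^+$) with $l$ even lies in $\overline{L(k,l)}^{\sigma_3,+}$ (resp.\ $\overline{L(k,l)}^{\sigma_2,+}$), so that $\bigoplus_{l<k/2,\,l\in2\Z}\overline{L(k,l)}^{\sigma_3,+}$ (resp.\ with $\sigma_2$) is contained in the product. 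Equality then follows from Proposition \ref{prop3.1} and the multiplicativity of quantum dimensions once one checks
\[
\frac{1}{\sin^2\frac{\pi}{k+2}}=\sum_{0\le l<k/2,\,l\in2\Z}2\,\frac{\sin\frac{(l+1)\pi}{k+2}}{\sin\frac{\pi}{k+2}},
\]
which for $k=4m+2$ reduces through $\sum_{s=0}^{m}\sin((2s+1)\theta)=\sin^2((m+1)\theta)/\sin\theta$ with $\theta=\frac{\pi}{k+2}$ and $(m+1)\theta=\frac{\pi}{4}$ to a true identity; since the left side is exactly the quantum dimension of the product, no further summand can occur.

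For the remaining pairing $\{\sigma_2,\sigma_3\}$, which carries no untwisted-lattice factor, I would bypass the products $W(k,\frac{k}{2})\boxtimes W(k,\frac{k}{2})$ and $V^{T_2}\boxtimes V^{T_1}$ entirely and instead use the symmetry of fusion rules. As all irreducible $L(k,0)^K$-modules are self-dual (Proposition \ref{t3.1}), Proposition \ref{fusionsymm.} gives
\[
N^{\overline{L(k,l)}^{\sigma_1,+}}_{\overline{L(k,\frac{k}{2})}^{\sigma_2,1},\,\overline{L(k,\frac{k}{2})}^{\sigma_3,1}}
=N^{\overline{L(k,\frac{k}{2})}^{\sigma_3,1}}_{\overline{L(k,l)}^{\sigma_1,+},\,\overline{L(k,\frac{k}{2})}^{\sigma_2,1}},
\]
and the right-hand coefficient is precisely the multiplicity of $\overline{L(k,\frac{k}{2})}^{\sigma_3,1}$ in the already proven product \eqref{e3.45}, which is $1$ for every even $l<\frac{k}{2}$; the parallel computation with \eqref{e3.46} shows the $\overline{L(k,l)}^{\sigma_1,-}$ do not occur. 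Thus $\bigoplus_{l<k/2,\,l\in2\Z}\overline{L(k,l)}^{\sigma_1,+}$ is contained in the product, and the same quantum-dimension count forces equality, in particular ruling out any $\frac{k}{2}$-module on the right.

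The main obstacle is exactly this $\{\sigma_2,\sigma_3\}$ base case: a naive decomposition there would demand the $K_0^+$-fusion $W(k,\frac{k}{2})\boxtimes W(k,\frac{k}{2})$ and the twisted--twisted lattice rule $V^{T_2}\boxtimes V^{T_1}$, neither of which is needed in the earlier theorems. The duality reduction to \eqref{e3.45}--\eqref{e3.46} is what keeps the argument self-contained, but it must be carried out carefully: one has to use self-duality of \emph{all} modules (Proposition \ref{t3.1}), the exact multiplicity $1$ of the $\frac{k}{2}$-summands recorded in Theorem \ref{thm3.11}, and finally the quantum-dimension bound to exclude the a priori admissible $\overline{L(k,\frac{k}{2})}^{\sigma_1,m}$ terms.
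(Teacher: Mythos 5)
Your proposal is correct, and for (\ref{e3.72})--(\ref{e3.73}) it is essentially the paper's own argument: the paper also establishes the single base product $\overline{L(k,\frac{k}{2})}^{\sigma_1,1}\boxtimes\overline{L(k,\frac{k}{2})}^{\sigma_2,1}$ by combining the containment $V_{\Z\gamma}^{\pm}\otimes(M^{\frac{k}{2},0})^{\pm}\subseteq\overline{L(k,\frac{k}{2})}^{\sigma_1,1}$ and (\ref{e3.60}) with the parafermion rules (\ref{e11})--(\ref{e12}) and Lemma \ref{l3.16}, then saturates by quantum dimensions (your direct evaluation via $\sum_{s=0}^{m}\sin((2s+1)\theta)=\sin^{2}((m+1)\theta)/\sin\theta$ and the paper's reflection $l\mapsto k-l$ compute the same total $\frac{1}{\sin^{2}\frac{\pi}{k+2}}$), and finally propagates to all other index pairs by exactly your simple-current calculus through (\ref{e1})--(\ref{e3}), Lemma \ref{l3.7} and Lemma \ref{l3.8}; I spot-checked your $\pm$ bookkeeping (e.g.\ $(i,j)=(4,4)$ and $(1,2)$) against the paper's explicit case-by-case computations and it agrees. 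Where you genuinely diverge is the $\{\sigma_2,\sigma_3\}$ pairing: the paper dismisses it with ``the proof is similar,'' which, carried out by decomposition, would implicitly require the twisted-by-twisted lattice fusion rules $V^{T_2,\pm}\boxtimes V^{T_1,\pm}$ of \cite{Ab01} and the parafermion products $W(k,\frac{k}{2})^{\pm}\boxtimes W(k,\frac{k}{2})^{\pm}$ of \cite{JW2}, neither of which is recalled in the paper. Your bootstrap avoids both: self-duality (Proposition \ref{t3.1}) together with the symmetry of Proposition \ref{fusionsymm.} converts the coefficient of $\overline{L(k,l)}^{\sigma_1,\pm}$ in $\overline{L(k,\frac{k}{2})}^{\sigma_2,1}\boxtimes\overline{L(k,\frac{k}{2})}^{\sigma_3,1}$ into the multiplicity of $\overline{L(k,\frac{k}{2})}^{\sigma_3,1}$ in (\ref{e3.45})--(\ref{e3.46}), already proved in Theorem \ref{thm3.11} and valid for $k\in 4\Z+2$ (there is no circularity, since Theorem \ref{thm3.11} does not use the present theorem); this yields multiplicity $1$ for each even $l<\frac{k}{2}$ and $0$ for the minus-signed modules, after which the same quantum-dimension count excludes any residual $\overline{L(k,\frac{k}{2})}^{\sigma_1,m}$ summand. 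The trade-off: the paper's (implicit) decomposition method treats the three pairings uniformly, while your duality reduction keeps the argument self-contained within the fusion data actually stated in the paper and pins the multiplicities without computing any new branching; both close the argument with the identical quantum-dimension saturation from Proposition \ref{prop3.1}.
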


\begin{proof}  
	 We first prove (\ref{e3.72}) for $j=1$. Notice that 
	 $$
 V_{\Z\gamma}^+\otimes (M^{\frac{k}{2},0})^+
\oplus V_{\Z\gamma}^-\otimes (M^{\frac{k}{2},0})^-\subseteq	\overline{L(k,\frac{k}{2})}^{\sigma_1,1},
	$$
and by (\ref{e3.60}), we have for $k\in 4\Z_++2$, 
	$$	\overline{L(k,\frac{k}{2})}^{\sigma_2,1}=V^{T_2,+}\otimes W(k,\frac{k}{2})^+\oplus V^{T_2,-}\otimes W(k,\frac{k}{2})^-.$$
By   (\ref{e11}) and (\ref{e12}), we have for $k\in 4\Z+2$,
\begin{eqnarray*}
(M^{\frac{k}{2},0})^{+}\boxtimes W^{\pm}=\bigoplus_{0\leq i< \frac{k}{2},l\in 2\mathbb{Z}}W(k,l)^{\pm},
\end{eqnarray*}
\begin{eqnarray*}
(M^{\frac{k}{2},0})^{-}\boxtimes W^{\pm}=\bigoplus_{0\leq i< \frac{k}{2},l\in 2\mathbb{Z}}W(k,l)^{\mp},
\end{eqnarray*}
where $W=W(k,\frac{k}{2})$ or $W=\widetilde{W(k,\frac{k}{2})}$. 
Then by  fusion rules of $V_{\Z\gamma}^+$ \cite{Ab01} and Lemma \ref{l3.16}, we have
$$
\bigoplus_{0\leq l<\frac{k}{2}, l\in 2\Z}\overline{L(k,l)}^{\sigma_3, +}\subseteq \overline{L(k,\frac{k}{2})}^{\sigma_1, 1}\boxtimes \overline{L(k,\frac{k}{2})}^{\sigma_2, 1}.
$$
Recall from Proposition \ref{prop3.1} that 
	$$
{\rm dim}_q\overline{L(k,i)}^{\sigma_r,+}={\rm dim}_q\overline{L(k,i)}^{\sigma_r,-}=2\dfrac{\sin\frac{(i+1)\pi}{k+2}}{\sin \frac{\pi}{k+2}}, \ i\neq\frac{k}{2}, \ r=1,2,3,
$$	
$$
{\rm dim}_qL(k, \frac{k}{2})^{\sigma_r,j}=\dfrac{1}{\sin \frac{\pi}{k+2}}, \ r=1,2,3, 	\  j=1,2,3,4. 
$$	Since for $0\leq i\leq k$, 
$$
\dfrac{\sin\frac{(i+1)\pi}{k+2}}{\sin \frac{\pi}{k+2}}=\dfrac{\sin\frac{(k-i+1)\pi}{k+2}}{\sin \frac{\pi}{k+2}},
$$
we have 
$$
\begin{array}{ll}
&\bigoplus_{0\leq l<\frac{k}{2}, l\in 2\Z}{\rm dim}_q \overline{L(k,l)}^{\sigma_3, +}
=\bigoplus_{0\leq l<\frac{k}{2}, l\in 2\Z}2\dfrac{\sin\frac{(l+1)\pi}{k+2}}{\sin \frac{\pi}{k+2}}\\&\\
&=\bigoplus_{0\leq l\leq k, l\in 2\Z}\dfrac{\sin\frac{(l+1)\pi}{k+2}}{\sin \frac{\pi}{k+2}}
=(\dfrac{1}{\sin \frac{\pi}{k+2}})^2\\&\\
&={\rm dim}_q (L(k, \frac{k}{2})^{\sigma_1,1}\boxtimes L(k, \frac{k}{2})^{\sigma_2,1}).
\end{array}
$$
Then we deduce that
$$
\overline{L(k,\frac{k}{2})}^{\sigma_1, 1}\boxtimes \overline{L(k,\frac{k}{2})}^{\sigma_2, 1}=
\bigoplus_{0\leq l<\frac{k}{2}, l\in 2\Z}\overline{L(k,l)}^{\sigma_3, +},
$$
which is (\ref{e3.72}) for $j=1$. For the other cases,  notice that by (\ref{e1})-(\ref{e3}) and Lemma \ref{l3.8}, 
$$
\overline{L(k,\frac{k}{2})}^{\sigma_2,4}=L(k,0)^{\sigma_2,4}\boxtimes \overline{L(k,\frac{k}{2})}^{\sigma_2,1}, 
$$
$$
L(k,0)^{\sigma_3,2}\boxtimes \overline{L(k,i)}^{\sigma_3,\pm}= \overline{L(k,i)}^{\sigma_3,\pm}, 
\ 
L(k,0)^{\sigma_2,4}=L(k,0)^{\sigma_3,2}. 
$$
Then we have 
$$
\begin{array}{ll}
&\overline{L(k,\frac{k}{2})}^{\sigma_1, 1}\boxtimes \overline{L(k,\frac{k}{2})}^{\sigma_2, 4}
=\overline{L(k,\frac{k}{2})}^{\sigma_1, 1}\boxtimes L(k,0)^{\sigma_2,4}\boxtimes \overline{L(k,\frac{k}{2})}^{\sigma_2, 1}\\&\\
&=
\bigoplus_{0\leq l<\frac{k}{2}, l\in 2\Z} L(k,0)^{\sigma_3,2}\boxtimes\overline{L(k,l)}^{\sigma_3, +}
=\bigoplus_{0\leq l<\frac{k}{2}, l\in 2\Z}\overline{L(k,l)}^{\sigma_3, +}.
\end{array}
$$
Similarly, by (\ref{e1})-(\ref{e3}), Lemma \ref{lem3.7}  and Lemma \ref{l3.8}, 
$$
\begin{array}{ll}
&\overline{L(k,\frac{k}{2})}^{\sigma_1, 2}\boxtimes \overline{L(k,\frac{k}{2})}^{\sigma_2, 3}={L(k, 0)}^{\sigma_1, 2}\boxtimes {L(k, 0)}^{\sigma_2, 3}\boxtimes \overline{L(k,\frac{k}{2})}^{\sigma_1, 1}\boxtimes\overline{L(k,\frac{k}{2})}^{\sigma_2, 1}\\&\\
&={L(k, 0)}^2\boxtimes {L(k, 0)}^2\boxtimes \overline{L(k,\frac{k}{2})}^{\sigma_1, 1}\boxtimes\overline{L(k,\frac{k}{2})}^{\sigma_2, 1}\\&\\
&
={L(k, 0)}^1\boxtimes \overline{L(k,\frac{k}{2})}^{\sigma_1, 1}\boxtimes\overline{L(k,\frac{k}{2})}^{\sigma_2, 1}=\bigoplus_{0\leq l<\frac{k}{2}, l\in 2\Z}\overline{L(k,l)}^{\sigma_3, +};
\end{array}
$$
\vskip 0.2cm
$$
\begin{array}{ll}
& \overline{L(k,\frac{k}{2})}^{\sigma_1, 1}\boxtimes \overline{L(k,\frac{k}{2})}^{\sigma_2, 2}=L(k,0)^3\boxtimes \overline{L(k,\frac{k}{2})}^{\sigma_1, 1}\boxtimes \overline{L(k,\frac{k}{2})}^{\sigma_2, 1}\\&\\
&=\bigoplus_{0\leq l<\frac{k}{2}, l\in 2\Z}L(k,0)^3\boxtimes \overline{L(k,l)}^{\sigma_3, +}=\bigoplus_{0\leq l<\frac{k}{2}, l\in 2\Z}L(k,0)^{\sigma_3,3}\boxtimes \overline{L(k,l)}^{\sigma_3, +}\\&\\
&=\bigoplus_{0\leq l<\frac{k}{2}, l\in 2\Z}\overline{L(k,l)}^{\sigma_3, -};
\end{array} 
$$
\vskip 0.2cm
$$
\begin{array}{ll}
& \overline{L(k,\frac{k}{2})}^{\sigma_1, 1}\boxtimes \overline{L(k,\frac{k}{2})}^{\sigma_2, 3}=L(k,0)^2\boxtimes \overline{L(k,\frac{k}{2})}^{\sigma_1, 1}\boxtimes \overline{L(k,\frac{k}{2})}^{\sigma_2, 1}\\&\\
&=\bigoplus_{0\leq l<\frac{k}{2}, l\in 2\Z}L(k,0)^2\boxtimes \overline{L(k,l)}^{\sigma_3, +}=\bigoplus_{0\leq l<\frac{k}{2}, l\in 2\Z}L(k,0)^{\sigma_3,4}\boxtimes \overline{L(k,l)}^{\sigma_3, +}\\&\\
&=\bigoplus_{0\leq l<\frac{k}{2}, l\in 2\Z}\overline{L(k,l)}^{\sigma_3, -};
\end{array} 
$$
\vskip 0.2cm
$$
\begin{array}{ll}
& \overline{L(k,\frac{k}{2})}^{\sigma_1, 2}\boxtimes \overline{L(k,\frac{k}{2})}^{\sigma_2, 4}=L(k,0)^2\boxtimes L(k,0)^{\sigma_2,4}\boxtimes\overline{L(k,\frac{k}{2})}^{\sigma_1, 1}\boxtimes \overline{L(k,\frac{k}{2})}^{\sigma_2, 1}\\&\\
&=L(k,0)^2\boxtimes L(k,0)^{4}\overline{L(k,\frac{k}{2})}^{\sigma_1, 1}\boxtimes \overline{L(k,\frac{k}{2})}^{\sigma_2, 1}=L(k,0)^3\boxtimes\overline{L(k,\frac{k}{2})}^{\sigma_1, 1}\boxtimes \overline{L(k,\frac{k}{2})}^{\sigma_2, 1}\\&\\
&=\bigoplus_{0\leq l<\frac{k}{2}, l\in 2\Z}L(k,0)^3\boxtimes \overline{L(k,l)}^{\sigma_3, +}=\bigoplus_{0\leq l<\frac{k}{2}, l\in 2\Z}L(k,0)^{\sigma_3,3}\boxtimes \overline{L(k,l)}^{\sigma_3, +}\\&\\
&=\bigoplus_{0\leq l<\frac{k}{2}, l\in 2\Z}\overline{L(k,l)}^{\sigma_3, -};
\end{array} 
$$
\vskip 0.2cm
$$
\begin{array}{ll}
& \overline{L(k,\frac{k}{2})}^{\sigma_1, 3}\boxtimes \overline{L(k,\frac{k}{2})}^{\sigma_2, 4}=L(k,0)^3\boxtimes L(k,0)^{\sigma_2,4}\boxtimes\overline{L(k,\frac{k}{2})}^{\sigma_1, 1}\boxtimes \overline{L(k,\frac{k}{2})}^{\sigma_2, 1}\\&\\
&=L(k,0)^3\boxtimes L(k,0)^{4}\boxtimes\overline{L(k,\frac{k}{2})}^{\sigma_1, 1}\boxtimes \overline{L(k,\frac{k}{2})}^{\sigma_2, 1}=L(k,0)^2\boxtimes\overline{L(k,\frac{k}{2})}^{\sigma_1, 1}\boxtimes \overline{L(k,\frac{k}{2})}^{\sigma_2, 1}\\&\\
&=\bigoplus_{0\leq l<\frac{k}{2}, l\in 2\Z}L(k,0)^2\boxtimes \overline{L(k,l)}^{\sigma_3, +}=\bigoplus_{0\leq l<\frac{k}{2}, l\in 2\Z}L(k,0)^{\sigma_3,4}\boxtimes \overline{L(k,l)}^{\sigma_3, +}\\&\\
&=\bigoplus_{0\leq l<\frac{k}{2}, l\in 2\Z}\overline{L(k,l)}^{\sigma_3, -};
\end{array} 
$$
\vskip 0.2cm
$$
\begin{array}{ll}
& \overline{L(k,\frac{k}{2})}^{\sigma_1, 2}\boxtimes \overline{L(k,\frac{k}{2})}^{\sigma_2, 1}=L(k,0)^2\boxtimes\overline{L(k,\frac{k}{2})}^{\sigma_1, 1}\boxtimes \overline{L(k,\frac{k}{2})}^{\sigma_2, 1}\\&\\
&=\bigoplus_{0\leq l<\frac{k}{2}, l\in 2\Z}L(k,0)^2\boxtimes \overline{L(k,l)}^{\sigma_3, +}=\bigoplus_{0\leq l<\frac{k}{2}, l\in 2\Z}L(k,0)^{\sigma_3,4}\boxtimes \overline{L(k,l)}^{\sigma_3, +}\\&\\&
=\bigoplus_{0\leq l<\frac{k}{2}, l\in 2\Z}\overline{L(k,l)}^{\sigma_3, -};
\end{array} 
$$
\vskip 0.2cm
$$
\begin{array}{ll}
& \overline{L(k,\frac{k}{2})}^{\sigma_1, 3}\boxtimes \overline{L(k,\frac{k}{2})}^{\sigma_2, 1}=L(k,0)^3\boxtimes\overline{L(k,\frac{k}{2})}^{\sigma_1, 1}\boxtimes \overline{L(k,\frac{k}{2})}^{\sigma_2, 1}\\&\\&=\bigoplus_{0\leq l<\frac{k}{2}, l\in 2\Z}L(k,0)^{\sigma_3,3}\boxtimes \overline{L(k,l)}^{\sigma_3, +}=\bigoplus_{0\leq l<\frac{k}{1}, l\in 2\Z}\overline{L(k,l)}^{\sigma_3, -};
\end{array} 
$$
\vskip 0.2cm
$$
\begin{array}{ll}
& \overline{L(k,\frac{k}{2})}^{\sigma_1, 4}\boxtimes \overline{L(k,\frac{k}{2})}^{\sigma_2, 1}=L(k,0)^4\boxtimes \overline{L(k,\frac{k}{2})}^{\sigma_1, 1}\boxtimes \overline{L(k,\frac{k}{2})}^{\sigma_2, 1}\\&\\&=L(k,0)^{\sigma_2,4}\boxtimes \overline{L(k,\frac{k}{2})}^{\sigma_1, 1}\boxtimes \overline{L(k,\frac{k}{2})}^{\sigma_2, 1}=
\overline{L(k,\frac{k}{2})}^{\sigma_1, 1}\boxtimes \overline{L(k,\frac{k}{2})}^{\sigma_2, 4};
\end{array} 
$$
\vskip 0.2cm
$$
\begin{array}{ll}
& \overline{L(k,\frac{k}{2})}^{\sigma_1, 3}\boxtimes \overline{L(k,\frac{k}{2})}^{\sigma_2, 2}=L(k,0)^3\boxtimes L(k,0)^3\boxtimes \overline{L(k,\frac{k}{2})}^{\sigma_1, 1}\boxtimes \overline{L(k,\frac{k}{2})}^{\sigma_2, 1}\\&\\&=L(k,0)^{1}\boxtimes \overline{L(k,\frac{k}{2})}^{\sigma_1, 1}\boxtimes \overline{L(k,\frac{k}{2})}^{\sigma_2, 1}=\overline{L(k,\frac{k}{2})}^{\sigma_1, 1}\boxtimes \overline{L(k,\frac{k}{2})}^{\sigma_2, 1};
\end{array} 
$$
\vskip 0.2cm
$$
\begin{array}{ll}
& \overline{L(k,\frac{k}{2})}^{\sigma_1, 4}\boxtimes \overline{L(k,\frac{k}{2})}^{\sigma_2, 3}=L(k,0)^4\boxtimes L(k,0)^2\boxtimes \overline{L(k,\frac{k}{2})}^{\sigma_1, 1}\boxtimes \overline{L(k,\frac{k}{2})}^{\sigma_2, 1}\\&\\&=L(k,0)^{3}\boxtimes \overline{L(k,\frac{k}{2})}^{\sigma_1, 1}\boxtimes \overline{L(k,\frac{k}{2})}^{\sigma_2, 1}=L(k,0)^{\sigma_3,3}\boxtimes \overline{L(k,\frac{k}{2})}^{\sigma_1, 1}\boxtimes \overline{L(k,\frac{k}{2})}^{\sigma_2, 1}\\&\\&=\bigoplus_{0\leq l<\frac{k}{1}, l\in 2\Z}\overline{L(k,l)}^{\sigma_3, -};
\end{array} 
$$
\vskip 0.2cm
$$
\begin{array}{ll}
& \overline{L(k,\frac{k}{2})}^{\sigma_1, 2}\boxtimes \overline{L(k,\frac{k}{2})}^{\sigma_2, 2}=L(k,0)^2\boxtimes L(k,0)^3\boxtimes \overline{L(k,\frac{k}{2})}^{\sigma_1, 1}\boxtimes \overline{L(k,\frac{k}{2})}^{\sigma_2, 1}\\&\\&=L(k,0)^{4}\boxtimes \overline{L(k,\frac{k}{2})}^{\sigma_1, 1}\boxtimes \overline{L(k,\frac{k}{2})}^{\sigma_2, 1}=L(k,0)^{\sigma_3,2}\boxtimes \overline{L(k,\frac{k}{2})}^{\sigma_1, 1}\boxtimes \overline{L(k,\frac{k}{2})}^{\sigma_2, 1}\\&\\&=\bigoplus_{0\leq l<\frac{k}{2}, l\in 2\Z}\overline{L(k,l)}^{\sigma_3, +};
\end{array} 
$$
\vskip 0.2cm
$$
\begin{array}{ll}
& \overline{L(k,\frac{k}{2})}^{\sigma_1, 3}\boxtimes \overline{L(k,\frac{k}{2})}^{\sigma_2, 3}=L(k,0)^3\boxtimes L(k,0)^2\boxtimes \overline{L(k,\frac{k}{2})}^{\sigma_1, 1}\boxtimes \overline{L(k,\frac{k}{2})}^{\sigma_2, 1}\\&\\&=L(k,0)^{4}\boxtimes \overline{L(k,\frac{k}{2})}^{\sigma_1, 1}\boxtimes \overline{L(k,\frac{k}{2})}^{\sigma_2, 1}=L(k,0)^{\sigma_3,2}\boxtimes \overline{L(k,\frac{k}{2})}^{\sigma_1, 1}\boxtimes \overline{L(k,\frac{k}{2})}^{\sigma_2, 1}\\&\\&=\bigoplus_{0\leq l<\frac{k}{2}, l\in 2\Z}\overline{L(k,l)}^{\sigma_3, +};
\end{array} 
$$
\vskip 0.2cm
$$
\begin{array}{ll}
& \overline{L(k,\frac{k}{2})}^{\sigma_1, 4}\boxtimes \overline{L(k,\frac{k}{2})}^{\sigma_2, 4}=L(k,0)^4\boxtimes L(k,0)^4\boxtimes \overline{L(k,\frac{k}{2})}^{\sigma_1, 1}\boxtimes \overline{L(k,\frac{k}{2})}^{\sigma_2, 1}\\&\\&=L(k,0)^{1}\boxtimes \overline{L(k,\frac{k}{2})}^{\sigma_1, 1}\boxtimes \overline{L(k,\frac{k}{2})}^{\sigma_2, 1}
=\overline{L(k,\frac{k}{2})}^{\sigma_1, 1}\boxtimes \overline{L(k,\frac{k}{2})}^{\sigma_2, 1}.
\end{array} 
$$
The proof for (\ref{e3.73})-(\ref{e6}) is similar.

\end{proof}
\begin{thm}\label{t3.17}
	For $k\in 4\Z_+$, we have
	\begin{eqnarray}\begin{split}
	&\overline{L(k,\frac{k}{2})}^{\sigma_1, 1}\boxtimes \overline{L(k,\frac{k}{2})}^{\sigma_2, 1}=\overline{L(k,\frac{k}{2})}^{\sigma_1, 2}\boxtimes \overline{L(k,\frac{k}{2})}^{\sigma_2, 3}
	=\overline{L(k,\frac{k}{2})}^{\sigma_1, 3}\boxtimes \overline{L(k,\frac{k}{2})}^{\sigma_2, 2}\\&\\&=\overline{L(k,\frac{k}{2})}^{\sigma_1, 4}\boxtimes \overline{L(k,\frac{k}{2})}^{\sigma_2, 4}=\overline{L(k,\frac{k}{2})}^{\sigma_3,1} \bigoplus_{0\leq l<\frac{k}{2}, l\in 2\Z}\overline{L(k,l)}^{\sigma_3, +}, \label{e7}  
	\end{split}
	\end{eqnarray}
	\vskip 0.1cm
	\begin{eqnarray}\begin{split}
	&\overline{L(k,\frac{k}{2})}^{\sigma_1, 1}\boxtimes \overline{L(k,\frac{k}{2})}^{\sigma_2, 2}=\overline{L(k,\frac{k}{2})}^{\sigma_1, 2}\boxtimes \overline{L(k,\frac{k}{2})}^{\sigma_2, 4}
=\overline{L(k,\frac{k}{2})}^{\sigma_1, 3}\boxtimes \overline{L(k,\frac{k}{2})}^{\sigma_2, 1}\\&\\&=\overline{L(k,\frac{k}{2})}^{\sigma_1, 4}\boxtimes \overline{L(k,\frac{k}{2})}^{\sigma_2, 3}	=\overline{L(k,\frac{k}{2})}^{\sigma_3,3} \bigoplus_{0\leq l<\frac{k}{2}, l\in 2\Z}\overline{L(k,l)}^{\sigma_3, -},\label{e8}   
\end{split}
	\end{eqnarray}
	\vskip 0.1cm
	\begin{eqnarray}\begin{split}
&	\overline{L(k,\frac{k}{2})}^{\sigma_1, 1}\boxtimes \overline{L(k,\frac{k}{2})}^{\sigma_2, 3}=\overline{L(k,\frac{k}{2})}^{\sigma_1, 2}\boxtimes \overline{L(k,\frac{k}{2})}^{\sigma_2, 1}
	=	\overline{L(k,\frac{k}{2})}^{\sigma_1, 3}\boxtimes \overline{L(k,\frac{k}{2})}^{\sigma_2, 4}\\&\\&=\overline{L(k,\frac{k}{2})}^{\sigma_1, 4}\boxtimes \overline{L(k,\frac{k}{2})}^{\sigma_2, 2}=\overline{L(k,\frac{k}{2})}^{\sigma_3,4} \bigoplus_{0\leq l<\frac{k}{2}, l\in 2\Z}\overline{L(k,l)}^{\sigma_3, -}, \label{e9}  
	\end{split}
	\end{eqnarray}
	\vskip 0.1cm
	\begin{eqnarray}\begin{split}\label{e3.78}
&	\overline{L(k,\frac{k}{2})}^{\sigma_1, 1}\boxtimes \overline{L(k,\frac{k}{2})}^{\sigma_2, 4}=\overline{L(k,\frac{k}{2})}^{\sigma_1, 4}\boxtimes \overline{L(k,\frac{k}{2})}^{\sigma_2, 1}
	=\overline{L(k,\frac{k}{2})}^{\sigma_1, 2}\boxtimes \overline{L(k,\frac{k}{2})}^{\sigma_2, 2}\\&\\&=\overline{L(k,\frac{k}{2})}^{\sigma_1, 3}\boxtimes \overline{L(k,\frac{k}{2})}^{\sigma_2, 3}=\overline{L(k,\frac{k}{2})}^{\sigma_3,2} \bigoplus_{0\leq l<\frac{k}{2
		}, l\in 2\Z}\overline{L(k,l)}^{\sigma_3, +}.  \label{e10}
	\end{split}
	\end{eqnarray}
	
\end{thm}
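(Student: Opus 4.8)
The plan is to follow exactly the strategy used for Theorem \ref{t3.16}: first establish the single product $\overline{L(k,\frac{k}{2})}^{\sigma_1,1}\boxtimes\overline{L(k,\frac{k}{2})}^{\sigma_2,1}$ appearing in (\ref{e7}) by decomposing both factors as $V_{\Z\gamma}^+\otimes K_0^+$-modules, and then deduce every remaining identity in (\ref{e7})--(\ref{e10}) from this one product together with the relations (\ref{e1})--(\ref{e3}), Lemma \ref{l3.7}, Lemma \ref{l3.8}, and associativity of the fusion product. The difference from Theorem \ref{t3.16} is confined to this base computation, since $\frac{k}{2}$ is now even.

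For the base product I would use that $\overline{L(k,\frac{k}{2})}^{\sigma_1,1}$ contains $V_{\Z\gamma}^+\otimes (M^{\frac{k}{2},0})^+\oplus V_{\Z\gamma}^-\otimes (M^{\frac{k}{2},0})^-$ (from Lemma \ref{l3.1} together with Lemma \ref{l3.16}), while for $k\in 4\Z$ the identifications (\ref{e3.58}) and (\ref{e3.41}) give $\overline{L(k,\frac{k}{2})}^{\sigma_2,1}\cong\overline{L(k,\frac{k}{2})}^{\sigma_3,1}=V^{T_1,+}\otimes W(k,\frac{k}{2})^+\oplus V^{T_1,-}\otimes W(k,\frac{k}{2})^-$. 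Factoring the fusion product across the tensor decomposition, the lattice components are controlled by the fusion rules of $V_{\Z\gamma}^+$ from \cite{Ab01} and the parafermion components by the rules (\ref{e13})--(\ref{e16}). The key new feature is that, because $\frac{k}{2}$ is even, the parafermion fusion (\ref{e13}) produces an extra summand $W(k,\frac{k}{2})^{\pm}$ beyond $\sum_{0\leq l<\frac{k}{2},\,l\in 2\Z}W(k,l)^{\pm}$; reassembled through Lemma \ref{l3.16} and (\ref{e3.41}) this extra summand is precisely the term $\overline{L(k,\frac{k}{2})}^{\sigma_3,1}$ on the right of (\ref{e7}). This yields the containment $\overline{L(k,\frac{k}{2})}^{\sigma_3,1}\oplus\bigoplus_{0\leq l<\frac{k}{2},\,l\in 2\Z}\overline{L(k,l)}^{\sigma_3,+}\subseteq\overline{L(k,\frac{k}{2})}^{\sigma_1,1}\boxtimes\overline{L(k,\frac{k}{2})}^{\sigma_2,1}$.

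To upgrade this containment to equality I would compare quantum dimensions via Proposition \ref{prop3.1} and the multiplicativity of ${\rm qdim}$ under fusion. Each factor has quantum dimension $\frac{1}{\sin\frac{\pi}{k+2}}$, so the product of quantum dimensions is $\left(\frac{1}{\sin\frac{\pi}{k+2}}\right)^2$. On the right-hand side the lower summands contribute $2\sum_{0\leq l<\frac{k}{2},\,l\in 2\Z}\frac{\sin\frac{(l+1)\pi}{k+2}}{\sin\frac{\pi}{k+2}}$, and the new summand $\overline{L(k,\frac{k}{2})}^{\sigma_3,1}$ contributes $\frac{1}{\sin\frac{\pi}{k+2}}$, which is exactly the self-paired $l=\frac{k}{2}$ value under the symmetry $\frac{\sin\frac{(l+1)\pi}{k+2}}{\sin\frac{\pi}{k+2}}=\frac{\sin\frac{(k-l+1)\pi}{k+2}}{\sin\frac{\pi}{k+2}}$, since $\frac{\sin\frac{(\frac{k}{2}+1)\pi}{k+2}}{\sin\frac{\pi}{k+2}}=\frac{1}{\sin\frac{\pi}{k+2}}$. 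Hence the two contributions add up to $\sum_{0\leq l\leq k,\,l\in 2\Z}\frac{\sin\frac{(l+1)\pi}{k+2}}{\sin\frac{\pi}{k+2}}=\left(\frac{1}{\sin\frac{\pi}{k+2}}\right)^2$, matching the product of quantum dimensions; so (\ref{e7}) holds for this first product. This is the structural counterpart of the fact that in the $k\in 4\Z+2$ case the self-paired point $\frac{k}{2}$ was odd and hence absent.

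Finally, the other products in (\ref{e7}) and all of (\ref{e8})--(\ref{e10}) I would obtain by fusing the base identity with the modules $L(k,0)^{\sigma_r,j}$: applying (\ref{e1})--(\ref{e3}) and Lemma \ref{l3.7} rewrites each $\overline{L(k,\frac{k}{2})}^{\sigma_r,j}$ as $L(k,0)^{\sigma_r,j}\boxtimes\overline{L(k,\frac{k}{2})}^{\sigma_r,1}$, and associativity lets me slide these generators through the base product, with Lemma \ref{l3.8} recording the identifications among the three twisted sectors. I expect the main obstacle to be purely a matter of bookkeeping, namely correctly tracking which of the labels $j=1,2,3,4$ attaches to the surviving level-$\frac{k}{2}$ summand $\overline{L(k,\frac{k}{2})}^{\sigma_3,\bullet}$, since this depends delicately on how (\ref{e3.41})--(\ref{e3.43}) and the isomorphisms (\ref{e3.58})--(\ref{e3.59}) interact with the $\pm$-flips induced by $L(k,0)^{\sigma_r,3}$ and $L(k,0)^{\sigma_r,4}$ through the parafermion rules (\ref{e13})--(\ref{e16}). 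Keeping these label assignments consistent across the various products is where the care is required, whereas the lower summands $\bigoplus_{0\leq l<\frac{k}{2},\,l\in 2\Z}\overline{L(k,l)}^{\sigma_3,\pm}$ transform uniformly.
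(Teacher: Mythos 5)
Your proposal follows essentially the same route as the paper's proof: it establishes the base product $\overline{L(k,\frac{k}{2})}^{\sigma_1,1}\boxtimes\overline{L(k,\frac{k}{2})}^{\sigma_2,1}$ by decomposing both factors over $V_{\Z\gamma}^+\otimes K_0^+$ via Lemma \ref{l3.16}, (\ref{e3.58}), (\ref{e3.41}) and the parafermion rules (\ref{e13})--(\ref{e14}), upgrades the resulting containment to equality by comparing quantum dimensions, and then obtains all remaining products by fusing with the simple currents $L(k,0)^{\sigma_r,j}$ using (\ref{e1})--(\ref{e3}), Lemmas \ref{l3.7} and \ref{l3.8}, and associativity, which is exactly how the paper argues (it, too, treats only (\ref{e7}) in full and declares the rest similar). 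Your explicit accounting of the self-paired $l=\frac{k}{2}$ term, whose quantum dimension $\frac{1}{\sin\frac{\pi}{k+2}}$ absorbs the extra summand $\overline{L(k,\frac{k}{2})}^{\sigma_3,1}$, is correct and in fact spells out a step the paper leaves implicit.
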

\begin{proof} We only prove  (\ref{e7}), since  the proof for other formulas is similar.
	By (\ref{e13}) and (\ref{e14}), we have for $k\in 4\Z$, 
	$$
		(M^{\frac{k}{2},0})^{+}\boxtimes W(k,\frac{k}{2})^{+}=\sum\limits_{\tiny{\begin{split}0\leq l\leq \frac{k}{2}-1 \\ l\in 2\mathbb{Z} \ \ \ \ \ \end{split}}} W(k,l)^{+}+W(k,\frac{k}{2})^{+},
	$$
	$$
		(M^{\frac{k}{2},0})^{-}\boxtimes W(k,\frac{k}{2})^{+}=\sum\limits_{\tiny{\begin{split}0\leq l\leq \frac{k}{2}-1 \\ l\in 2\mathbb{Z} \ \ \ \ \ \end{split}}} W(k,l)^{-}+W(k,\frac{k}{2})^{-}.
	$$
 Notice that 
$$
V_{\Z\gamma}^+\otimes (M^{\frac{k}{2},0})^+
\oplus V_{\Z\gamma}^-\otimes (M^{\frac{k}{2},0})^-\subseteq	\overline{L(k,\frac{k}{2})}^{\sigma_1,1},
$$
and by (\ref{e3.41}) and (\ref{e3.59}), 
$$	\overline{L(k,\frac{k}{2})}^{\sigma_2,1}=V^{T_1,+}\otimes W(k,\frac{k}{2})^+\oplus V^{T_1,-}\otimes W(k,\frac{k}{2})^-.$$
Then by (\ref{e3.41}) we have 
$$
\overline{L(k,\frac{k}{2})}^{\sigma_3,1} \bigoplus_{0\leq l<\frac{k}{2}, l\in 2\Z}\overline{L(k,l)}^{\sigma_3, +}\subseteq \overline{L(k,\frac{k}{2})}^{\sigma_1, 1}\boxtimes \overline{L(k,\frac{k}{2})}^{\sigma_2, 1}.$$	
Since the quantum dimensions of both sides are equal, we deduce that
$$
\overline{L(k,\frac{k}{2})}^{\sigma_1, 1}\boxtimes \overline{L(k,\frac{k}{2})}^{\sigma_2, 1}=\overline{L(k,\frac{k}{2})}^{\sigma_3,1} \bigoplus_{0\leq l<\frac{k}{2}, l\in 2\Z}\overline{L(k,l)}^{\sigma_3, +}.
$$
For other cases of (\ref{e7}),  by (\ref{e1})-(\ref{e3}), Lemma \ref{lem3.7}  and Lemma \ref{l3.8},  we have
$$
\begin{array}{ll}
& \overline{L(k,\frac{k}{2})}^{\sigma_1, 2}\boxtimes \overline{L(k,\frac{k}{2})}^{\sigma_2, 3}\\&\\&
=L(k,0)^{2}\boxtimes L(k,0)^{\sigma_2,3} \boxtimes  \overline{L(k,\frac{k}{2})}^{\sigma_1, 1}\boxtimes \overline{L(k,\frac{k}{2})}^{\sigma_2, 1}\\&\\&
=L(k,0)^{2}\boxtimes L(k,0)^{2} \boxtimes  \overline{L(k,\frac{k}{2})}^{\sigma_1, 1}\boxtimes \overline{L(k,\frac{k}{2})}^{\sigma_2, 1}\\&\\&
=L(k,0)^{1} \boxtimes  \overline{L(k,\frac{k}{2})}^{\sigma_1, 1}\boxtimes \overline{L(k,\frac{k}{2})}^{\sigma_2, 1}=\overline{L(k,\frac{k}{2})}^{\sigma_1, 1}\boxtimes\overline{L(k,\frac{k}{2})}^{\sigma_2, 1};
\end{array}
$$
\vskip 0.2cm
$$\begin{array}{ll}
	& \overline{L(k,\frac{k}{2})}^{\sigma_1, 3}\boxtimes \overline{L(k,\frac{k}{2})}^{\sigma_2, 2}\\&\\&
	=L(k,0)^{3}\boxtimes L(k,0)^{\sigma_2,2} \boxtimes  \overline{L(k,\frac{k}{2})}^{\sigma_1, 1}\boxtimes \overline{L(k,\frac{k}{2})}^{\sigma_2, 1}\\&\\&
	=L(k,0)^{3}\boxtimes L(k,0)^{3} \boxtimes  \overline{L(k,\frac{k}{2})}^{\sigma_1, 1}\boxtimes \overline{L(k,\frac{k}{2})}^{\sigma_2, 1}\\&\\&
	=L(k,0)^{1} \boxtimes  \overline{L(k,\frac{k}{2})}^{\sigma_1, 1}\boxtimes \overline{L(k,\frac{k}{2})}^{\sigma_2, 1}=\overline{L(k,\frac{k}{2})}^{\sigma_1, 1}\boxtimes\overline{L(k,\frac{k}{2})}^{\sigma_2, 1};
\end{array}
$$
\vskip 0.1cm
$$\begin{array}{ll}
& \overline{L(k,\frac{k}{2})}^{\sigma_1, 4}\boxtimes \overline{L(k,\frac{k}{2})}^{\sigma_2, 4}\\&\\&
=L(k,0)^{4}\boxtimes L(k,0)^{\sigma_2,4} \boxtimes  \overline{L(k,\frac{k}{2})}^{\sigma_1, 1}\boxtimes \overline{L(k,\frac{k}{2})}^{\sigma_2, 1}\\&\\&
=L(k,0)^{4}\boxtimes L(k,0)^{4} \boxtimes  \overline{L(k,\frac{k}{2})}^{\sigma_1, 1}\boxtimes \overline{L(k,\frac{k}{2})}^{\sigma_2, 1}\\&\\&
=L(k,0)^{1} \boxtimes  \overline{L(k,\frac{k}{2})}^{\sigma_1, 1}\boxtimes \overline{L(k,\frac{k}{2})}^{\sigma_2, 1}=\overline{L(k,\frac{k}{2})}^{\sigma_1, 1}\boxtimes\overline{L(k,\frac{k}{2})}^{\sigma_2, 1}.
\end{array}
$$
\end{proof}

\end{document}